\newtheorem{theorem}{Theorem}[section]
\newtheorem{lemma}[theorem]{Lemma}
\newtheorem{cor}[theorem]{Corollary}
\newtheorem{definition}[theorem]{Definition}
\newtheorem{example}[theorem]{Example}
\newtheorem{remark}[theorem]{Remark}
\newtheorem{proposition}[theorem]{Proposition}
\def\pagenumber{1}
\begin{document}
\setcounter{page}{\pagenumber}
\newcommand{\T}{\mathbb{T}}
\newcommand{\R}{\mathbb{R}}
\newcommand{\Q}{\mathbb{Q}}
\newcommand{\N}{\mathbb{N}}
\newcommand{\Z}{\mathbb{Z}}
\newcommand{\tx}[1]{\quad\mbox{#1}\quad}
\parindent=0pt
\def\SRA{\hskip 2pt\hbox{$\joinrel\mathrel\circ\joinrel\to$}}
\def\tbox{\hskip 1pt\frame{\vbox{\vbox{\hbox{\boldmath$\scriptstyle\times$}}}}\hskip 2pt}
\def\circvert{\vbox{\hbox to 8.9pt{$\mid$\hskip -3.6pt $\circ$}}}
\def\IM{\hbox{\rm im}\hskip 2pt}
\def\COIM{\hbox{\rm coim}\hskip 2pt}
\def\COKER{\hbox{\rm coker}\hskip 2pt}
\def\TR{\hbox{\rm tr}\hskip 2pt}
\def\GRAD{\hbox{\rm grad}\hskip 2pt}
\def\RANK{\hbox{\rm rank}\hskip 2pt}
\def\MOD{\hbox{\rm mod}\hskip 2pt}
\def\DEN{\hbox{\rm den}\hskip 2pt}

\title[Exotic Heat PDE's.II]{\mbox{}\\[1cm] EXOTIC HEAT PDE's.II}
\author{Agostino Pr\'astaro}
\maketitle
\vspace{-.5cm}
{\footnotesize
\begin{center}
Department of Methods and Mathematical Models for Applied
Sciences, University of Rome ''La Sapienza'', Via A.Scarpa 16,
00161 Rome, Italy. \\
E-mail: {\tt Prastaro@dmmm.uniroma1.it}
\end{center}
}
\vspace{1cm}
\centerline{\it This paper is dedicated to Stephen Smale in occasion of his 80th birthday}

\begin{abstract} \noindent
Exotic heat equations that allow to prove the Poincar\'e conjecture and its generalizations to any dimension are considered. The methodology used is the PDE's algebraic topology, introduced by A. Pr\'astaro in the geometry of PDE's, in order to characterize global solutions. In particular it is shown that this theory allows us to identify $n$-dimensional {\em exotic spheres}, i.e., homotopy spheres that are homeomorphic, but not diffeomorphic to the standard $S^n$.
\end{abstract}

\noindent {\bf AMS Subject Classification:} 55N22, 58J32, 57R20; 58C50; 58J42; 20H15; 32Q55; 32S20.

\vspace{.08in} \noindent \textbf{Keywords}: Integral bordisms in PDE's;
Existence of local and global solutions in PDE's; Conservation laws;
Crystallographic groups; Singular PDE's; PDE's on complex manifolds category; PDE's on quantum supermanifolds category; Exotic spheres.

\section{Introduction}

\rightline{\it ''How exotic are exotic spheres ?''}

\vskip 1cm
The term ''exotic sphere'' was used by J. Milnor to characterize smooth manifolds that are homotopy equivalent and homeomorphic to $S^n$, but not diffeomorphic to $S^n$.\footnote{In this paper we will use the following notation: $\thickapprox$ homeomorphism; $\cong$ diffeomorphism; $\approxeq$ homotopy equivalence; $\simeq$ homotopy.} This strange mathematical phenomenon, never foreseen before the introduction just by J. Milnor of the famous $7$-dimensional exotic sphere \cite{MILNOR}, has stimulated a lot of mathematical research in algebraic topology. The starting points, were, other than the cited paper by J. W. Milnor, also a joint paper with M. A. Kervaire \cite{KERVAIRE-MILNOR} and some papers by S. Smale \cite{SMALE1}, Freedman \cite{FREEDMAN} and J. Cerf \cite{CERF} on generalizations of the Poincar\'e conjecture in dimension $n\ge 4$. There the principal mathematical tools utilized were Morse theory (Milnor), h-cobordism theory (Smale), surgery techniques and Hirzebruch signature formula. Surprising, from this beautiful mathematical architecture was remained excluded just the famous Poincar\'e conjecture for $3$-dimensional manifolds. In fact, the surgery techniques do not give enough tools in low dimension ($n<5$), where surgery obstructions disappear. Really, it was necessary to recast the Poincar\'e problem as a problem to find solutions in a suitable PDE equation ({\em Ricci flow equation}), to be able to obtain more informations just on dimension three. (See works by R.S. Hamilton \cite{HAMIL1, HAMIL2, HAMIL3, HAMIL4, HAMIL5}, G. Perelman \cite{PER1, PER2} and A. Pr\'astaro \cite{PRA14, AG-PRA1}.) The idea by R. S. Hamilton to recast the problem in the study of the Ricci flow equation has been the real angular stone that has allowed to look to the solution of the Poincar\'e conjecture from a completely new point of view. In fact, with this new prospective it was possible to G. Perelman to obtain his results and to A. Pr\'astaro to give a new proof of this conjecture, by using his PDE's algebraic topologic theory. To this respect, let us emphasize that the usual geometric methods for PDE's (Spencer, Cartan), were able to formulate for nonlinear PDE's, local existence theorems only, until the introduction, by A. Pr\'astaro, of the algebraic topologic methods in the PDE's geometric theory. These give suitable tools to calculate integral bordism groups in PDE's, and to characterize global solutions. Then, on the ground of integral bordism groups, a new geometric theory of stability for PDE's and solutions of PDE's has been built. These general methodologies allowed to A. Pr\'astaro to solve fundamental mathematical problems too, other than the Poincar\'e conjecture and some of its generalizations, like characterization of global smooth solutions for the Navier-Stokes equation and global smooth solutions with mass-gap for the quantum Yang-Mills superequation. (See \cite{PRA3, PRA4, PRA5, PRA6, PRA7, PRA8, PRA9, PRA10, PRA11, PRA12, PRA13, PRA14, PRA15}.\footnote{See also Refs. \cite{AG-PRA1, AG-PRA2, PRA-RAS}, where interesting related applications of the PDE's Algebraic Topology are given.})

The main purpose of this paper is to show how, by using the PDE's algebraic topology, introduced by A. Pr\'astaro, one can prove the Poincar\'e conjecture in any dimension for the category of smooth manifolds, but also to identify exotic spheres. In the part I \cite{PRA16} we have just emphasized as in dimension $3$, the method followed by A. Pr\'astaro allows us to prove the Poincar\'e conjecture and to state also that $3$-dimensional homotopy spheres are diffeomorphic to $S^3$. (Related problems are considered there too.) In the framework of the PDE's algebraic topology, the identification of exotic spheres is possible thanks to an interaction between integral bordism groups of PDE's, conservation laws, surgery and geometric topology of manifolds. With this respect we shall enter in some details on these subjects, in order to well understand and explain the meaning of such interactions. So the paper splits in three sections other this Introduction. 2. Integral bordism groups in Ricci flow PDE's. 3. Morse theory in Ricci flow PDE's. 4. h-Cobordism in Ricci flow PDE's. The main result is contained just in this last section and it is Theorem \ref{integral-h-cobordism-in-ricci-flow-pde} that by utilizing the previously results considered states (and proves) the following.\footnote{In order to allow a more easy understanding, this paper has been written in a large expository style.}

\textbf{Theorem \ref{integral-h-cobordism-in-ricci-flow-pde}.} {\em The generalized Poincar\'e conjecture, for any dimension $n\ge 1$ is true, i.e., any $n$-dimensional homotopy sphere $M$ is homeomorphic to $S^n$: $M\approx S^n$.

For $1\le n\le 6$, $n\not=4$, one has also that $M$ is diffeomorphic to $S^n$: $M\cong S^n$. But for $n> 6$,  it does not necessitate that $M$ is diffeomorphic to $S^n$. This happens when the Ricci flow equation, under the {\em homotopy equivalence full admissibility hypothesis}, (see below for definition), becomes a $0$-crystal.

Moreover, under the {\em sphere full admissibility hypothesis}, the Ricci flow equation becomes a $0$-crystal in any dimension $n\ge 1$.}

\section{\bf INTEGRAL BORDISM GROUPS IN RICCI FLOW PDE's}

In this section we shall characterize the local and global solutions of the Ricci flow equation, following the geometric approach of some our previous works on this equation \cite{PRA4, PRA14, PRA12, AG-PRA1}.
Let $M$ be a $n$-dimensional smooth manifold and let us consider
the following fiber bundle $\bar\pi:E\equiv\mathbb{R}\times
\widetilde{S^0_2M}\to{\mathbb R}\times M$, $(t,x^i,y_{ij})_{1\le
i,j\le n}\mapsto(t,x^i)\equiv(x^\alpha)_{0\le\alpha\le n}$, where
$\widetilde{S^0_2M}\subset S^0_2M$ is the open subbundle of
non-degenerate Riemannian metrics on $M$. Then the Ricci flow
equation is the closed second order partial differential relation,
(in the sense of Gromov \cite{GROMOV}), on the fiber bundle
$\bar\pi:E\to{\mathbb R}\times M$, $(RF)\subset J{\it D}^2(E)$,
defined by the differential polynomials on $J{\it
D}^2(E)$ given in (\ref{differential-polynomials-ricci-flow-pde}):

\begin{equation}\label{differential-polynomials-ricci-flow-pde}
\left\{\begin{array}{ll}
 F_{jl}&\equiv|y|[
y_{ik}](y_{il,jk}+y_{jk,il}-y_{jl,ik}-y_{ik,jl})\\
&+[y_{ik}][y_{rs}]([jk,r][il,s]-[jl,r][ik,s])+{{|y|^2}\over{2}}y_{jl,t}\\
&\equiv
S_{jl}(y_{rs},y_{rs,\alpha},y_{rs,pq})+{{|y|^2}\over{
2}}y_{jl,t}=0,\\
\end{array}
\right.
\end{equation}
where $[ij,r]$ are the usual Christoffels symbols,
given by means of the coordinates $y_{rs,i}$, $|y|=\det(y_{ik})$,
and $[y_{ik}]$ is the algebraic complement of $y_{ik}$. The ideal
${\frak p}\equiv<F_{jl}>$ is not prime in ${\mathbb
R}[y_{rs},y_{rs,\alpha},y_{rs,ij}]$. However, an irreducible
component is described by the system in solved form:
$y_{rs,t}=-{{2}\over{|y|^2}}S_{jl}$. This is formally integrable
and also completely integrable.\footnote{We shall denote with the
same symbol $(RF)$ the corresponding algebraic manifold. For a
geometric algebraic theory of PDE's see the monograph \cite{PRA3}, and
references quoted there.} In fact,
\begin{equation}\label{dimensions-structures-ricci-flow-pde}
\left\{\begin{array}{l}
 \dim J{\it D}^{2+s}(E)=n+1+\sum_{0\le r\le
2+s}{{(n+1)n}\over{2}}{{(n+r)!}\over{r!n!}}\\
\dim(RF)_{+s}=n+1+{{(n+1)n}\over{2}}[\sum_{0\le r\le
2+s}{{(n+r)!}\over{r!n!}}-\sum_{0\le r'\le
s}{{(n+r')!}\over{r'!n!}}]\\
\dim g_{2+s}={{(n+1)n}\over{2}}{{(n+2+s)!}\over{(2+s)!n!}}-{{(n+1)n}\over{2}}{{(n+s)!}\over{s!n!}}.\\
\end{array}\right.
\end{equation}

Therefore, one has: $\dim (RF)_{+s}=\dim (RF)_{+(s-1)}+\dim
g_{2+s}$. This assures that one has the exact sequences in (\ref{short-exact-sequence-for-surjectivity-prolongation-ricci-flow-pde}).
\begin{equation}\label{short-exact-sequence-for-surjectivity-prolongation-ricci-flow-pde}
\xymatrix{(RF)_{+s}\ar[r]&(RF)_{+(s-1)}\ar[r]& 0},\: s\ge 1.
\end{equation}
One can also see that the symbol $g_2$ is not involutive. By the way a
general theorem of the formal geometric theory of PDE's assures
that after a finite number of prolongations, say $s$, the
corresponding symbol $g_{2+s}$ becomes involutive. (See
refs.{\em\cite{GOLD, PRA3}}.) Then, taking into account the surjectivity of
the mappings (\ref{short-exact-sequence-for-surjectivity-prolongation-ricci-flow-pde}), we get that $(RF)$ is formally
integrable. Furthermore, from the algebraic character of this
equation, we get also that is completely integrable. Therefore, in
the neighborhood of any of its points $q\in(RF)$ we can find
solutions. (These can be analytic ones, but also smooth if we
consider to work on the infinity prolongation $(RF)_{+\infty}$,
where the Cartan distribution is ''involutive'' and of dimension
$(n+1)$.) Finally, taking into account that $\dim
(RF)>2(n+1)+1=2n+3$, we can use Theorem 2.15 in \cite{PRA4} to calculate the
$n$-dimensional singular integral bordism group, $\Omega^{(RF)}_{n,s}$, for $n$-dimensional
closed smooth admissible integral manifolds bording by means of
(singular) solutions. (Note that the symbols of $(RF)$ and its
prolongations are non-zero.) This group classifies the structure
of the global singular solutions of the Ricci-flow equation. One has:
\begin{equation}\label{integral-singular-bordism-group-ricci-flow-pde}
\Omega^{(RF)}_{n,s}\cong\bigoplus_{r+s=n}H_r(M;{\mathbb
Z}_2)\otimes_{\mathbb{Z}_2}\Omega_s,
\end{equation}
 where $\Omega_s$ is the
bordism group for $s$-dimensional closed smooth
manifolds.\footnote{We used the fact that the fiber of $E\to M$ is
contractible.}

It is important to underline that with the term ''$n$-dimensional
closed smooth admissible integral manifolds'' we mean smooth integral manifolds, $N\subset (RF)\subset J{\it D}^2(E)$, that diffeomorphically project on their image on $E$, via the canonical projection $\pi_{2,0}: J{\it D}^2(E)\to E$. In \cite{PRA14} we have proved, that any smooth section $g:M\to \widetilde{S^0_2M}$, identifies a space-like $n$-dimensional smooth integral manifold $N\subset (RF)$, and that for such a Cauchy manifold pass local smooth solutions, contained in a tubular neigbourhood $N\times[0,\epsilon)\subset(RF)$, for suitable $\epsilon>0$. Therefore, we can represent any $n$-dimensional smooth compact Riemannian manifold $(M,\gamma)$ as a space-like Cauchy manifold $N_0\subset(RF)_{t_0}$, for some initial time $t_0$, and ask if there are solutions that bord $N_0$ with $(S^n,\gamma')$, where $\gamma'$ is the canonical metric of $S^n$, identified with another space-like Cauchy manifold  $N_1\subset(RF)_{t_1}$, with $t_0< t_1$. The answer depends on the class of solution that we are interested to have. For weak-singular solutions the corresponding integral bordism group $\Omega^{(RF)}_{n,s}$ is given in (\ref{integral-singular-bordism-group-ricci-flow-pde}).
The relation with the integral bordism group $\Omega^{(RF)}_{n}$, for smooth solutions of $(RF)$ is given by the exact commutative diagram (\ref{smooth-integral-bordism-group-rf-pde}) where is reported the relation with the bordism group $\Omega_n$ for smooth manifolds.
\begin{equation}\label{smooth-integral-bordism-group-rf-pde}
\xymatrix{0\ar[rd]&&&0\ar[d]&\\
&\overline{K}^{(RF)}_{n}\ar[rd]\ar[rr]&&K^{(RF)}_{n,s;n}\ar[d]\ar[r]&0\\
0\ar[r]&K^{(RF)}_{n,s}\ar[u]\ar[r]&\Omega^{(RF)}_{n}\ar[rd]^{c}\ar[r]^{a}&\Omega^{(RF)}_{n,s}\ar[d]^{b}\ar[r]&0\\
&0\ar[u]\ar[rr]&&\Omega_{n}\ar[d]&\\
&&&0&\\}
\end{equation}
\begin{theorem}
Let $M$ in the Ricci flow equation $(RF)\subset J{\it D}^2(E)\subset J^2_n(W)$ be a smooth compact $n$-dimensional manifold homotopy equivalent to $S^n$. Then the $n$-dimensional singular integral bordism group of $(RF)$ is given in {\em(\ref{singular-integral-bordism-homotopy-sphere-n-dimensional})}.
\begin{equation}\label{singular-integral-bordism-homotopy-sphere-n-dimensional}
  \Omega^{(RF)}_{n,s}=\Omega_{n}\bigoplus\mathbb{Z}_2.
\end{equation}
Then one has the exact commutative diagram given in {\em(\ref{smooth-integral-bordism-group-rf-pde-homotopy-sphere-case})}
\begin{equation}\label{smooth-integral-bordism-group-rf-pde-homotopy-sphere-case}
\xymatrix{0\ar[rd]&&&0\ar[d]&\\
&\overline{K}^{(RF)}_{n}\ar[rd]\ar[rr]&&K^{(RF)}_{n,s;n}\ar[d]\ar[r]&0\\
0\ar[r]&K^{(RF)}_{n,s}\ar[u]\ar[r]&\Omega^{(RF)}_{n}\ar[rd]^{c}\ar[r]^{a}&\Omega_n\bigoplus \mathbb{Z}_2\ar[d]^{b}\ar[r]&0\\
&0\ar[u]\ar[rr]&&\Omega_{n}\ar[d]&\\
&&&0&\\}
\end{equation}
One has the isomorphisms reported in {\em(\ref{isomorphisms-smooth-integral-bordism-group-rf-pde-homotopy-sphere-case})}.
\begin{equation}\label{isomorphisms-smooth-integral-bordism-group-rf-pde-homotopy-sphere-case}
\left\{\begin{array}{ll}
{\rm(a)}:& \Omega^{(RF)}_{n}/ K^{(RF)}_{n,s}\cong\Omega^{(RF)}_{n,s}\\
\\
{\rm(b)}:&   \Omega^{(RF)}_{n}/\overline{K}^{(RF)}_{n}\cong\Omega_n\\
\end{array}\right.
\end{equation}

\end{theorem}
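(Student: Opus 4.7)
The plan is to derive the conclusion by specializing the general bordism formula (\ref{integral-singular-bordism-group-ricci-flow-pde}) to the case where $M$ is a homotopy sphere, and then propagate this substitution through the commutative diagram (\ref{smooth-integral-bordism-group-rf-pde}) which has already been established for an arbitrary compact base manifold.

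First, I would invoke homotopy invariance of singular homology: since $M\approxeq S^n$, one has $H_r(M;\mathbb{Z}_2)\cong H_r(S^n;\mathbb{Z}_2)$, which equals $\mathbb{Z}_2$ for $r\in\{0,n\}$ and vanishes otherwise. Plugging this into
\[
\Omega^{(RF)}_{n,s}\cong\bigoplus_{r+s=n}H_r(M;\mathbb{Z}_2)\otimes_{\mathbb{Z}_2}\Omega_s,
\]
only the summands with $(r,s)=(0,n)$ and $(r,s)=(n,0)$ survive. The first contributes $\mathbb{Z}_2\otimes_{\mathbb{Z}_2}\Omega_n\cong\Omega_n$, the second contributes $\mathbb{Z}_2\otimes_{\mathbb{Z}_2}\Omega_0\cong\mathbb{Z}_2$, using $\Omega_0\cong\mathbb{Z}$ and interpreting the tensor over $\mathbb{Z}_2$ as mod-$2$ reduction when the right factor is merely a $\mathbb{Z}$-module. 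Adding the two yields (\ref{singular-integral-bordism-homotopy-sphere-n-dimensional}).

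Second, I would substitute this computed value of $\Omega^{(RF)}_{n,s}$ into the upper-right vertex of the general diagram (\ref{smooth-integral-bordism-group-rf-pde}). No new argument is required: exactness of all rows, columns and diagonals was proved for arbitrary $M$, and we are only replacing one group by a canonically isomorphic one along the same morphisms $a$, $b$, $c$. This immediately produces (\ref{smooth-integral-bordism-group-rf-pde-homotopy-sphere-case}).

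Finally, the two isomorphisms in (\ref{isomorphisms-smooth-integral-bordism-group-rf-pde-homotopy-sphere-case}) are read off directly from the exact sequences of the diagram: the horizontal short exact sequence $0\to K^{(RF)}_{n,s}\to\Omega^{(RF)}_{n}\to\Omega^{(RF)}_{n,s}\to 0$ passing through $a$ yields (a), while the diagonal short exact sequence $0\to\overline{K}^{(RF)}_{n}\to\Omega^{(RF)}_{n}\to\Omega_n\to 0$ associated with $c$ yields (b). The only point where I would invest real care is in fixing the convention for $\otimes_{\mathbb{Z}_2}$ when the right factor is not \emph{a priori} a $\mathbb{Z}_2$-module (as happens for $\Omega_0=\mathbb{Z}$); once this is interpreted as $(-)\otimes_{\mathbb{Z}}\mathbb{Z}_2$, the rest of the argument is a purely formal bookkeeping exercise built on top of (\ref{integral-singular-bordism-group-ricci-flow-pde}) and (\ref{smooth-integral-bordism-group-rf-pde}).
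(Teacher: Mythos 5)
Your proposal is correct and follows essentially the same route as the paper: homotopy invariance gives $H_p(M;\mathbb{Z}_2)\cong H_p(S^n;\mathbb{Z}_2)$, this is substituted into the general formula (\ref{integral-singular-bordism-group-ricci-flow-pde}), and the resulting group is then inserted into the already-established diagram (\ref{smooth-integral-bordism-group-rf-pde}), from whose exact sequences the two quotient isomorphisms are read off. The only caveat is unnecessary: the $\Omega_s$ in that formula is the \emph{unoriented} smooth bordism group (as the table of examples confirms), so every $\Omega_s$ is already a $\mathbb{Z}_2$-module and $\Omega_0\cong\mathbb{Z}_2$ directly, with no need for your mod-$2$ reduction convention --- which in any case yields the same $\mathbb{Z}_2$ summand.
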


\begin{proof}
Since we have assumed that $M$ is homotopy equivalent to $S^n$, ($M\approxeq S^n$), we can state that $M$ has the same homology groups of $S^n$. Therefore we get the isomorphisms reported in {\em(\ref{homology-groups-homotopy-sphere-n-dimensional})}.

\begin{equation}\label{homology-groups-homotopy-sphere-n-dimensional}
H_p(M;\mathbb{Z}_2)\cong H_p(S^n;\mathbb{Z}_2)\cong\left\{\begin{array}{l}
                                                          \mathbb{Z}_2\, ,p=0,n \\
                                                         0\, ,\hbox{\rm otherwise}\\
                                                        \end{array}\right.
\end{equation}
Therefore, taking into account (\ref{integral-singular-bordism-group-ricci-flow-pde}) we get the isomorphism (\ref{singular-integral-bordism-homotopy-sphere-n-dimensional}).
\end{proof}

\begin{example}
In Tab. \ref{examples-singular-integral-bordism-groups-for-n-dimensional-homotopy-spheres} we report some explicitly calculated cases of integral singular bordism groups for $1\le n\le 7$.
\begin{table}[h]
\caption{Examples of singular integral bordism groups for $n$-dimensional homotopy spheres}
\label{examples-singular-integral-bordism-groups-for-n-dimensional-homotopy-spheres}
\begin{tabular}{|c|c|}
  \hline
  &\\
  \hfil{\rm{\footnotesize $n$}}\hfil& \hfil{\rm{\footnotesize $\Omega^{(RF)}_{n,s}$}}\hfil\\
  &\\
\hline
\hfil{\rm{\footnotesize $1$}}\hfil& \hfil{\rm{\footnotesize $\mathbb{Z}_2$}}\hfil\\
\hfil{\rm{\footnotesize $2$}}\hfil& \hfil{\rm{\footnotesize $\mathbb{Z}_2\oplus\mathbb{Z}_2$}}\hfil\\
\hfil{\rm{\footnotesize $3$}}\hfil& \hfil{\rm{\footnotesize $\mathbb{Z}_2$}}\hfil\\
\hfil{\rm{\footnotesize $4$}}\hfil& \hfil{\rm{\footnotesize $\mathbb{Z}_2\oplus\mathbb{Z}_2\oplus\mathbb{Z}_2$}}\hfil\\
\hfil{\rm{\footnotesize $5$}}\hfil& \hfil{\rm{\footnotesize $\mathbb{Z}_2\oplus\mathbb{Z}_2$}}\hfil\\
\hfil{\rm{\footnotesize $6$}}\hfil& \hfil{\rm{\footnotesize $\mathbb{Z}_2\oplus\mathbb{Z}_2\oplus\mathbb{Z}_2\oplus\mathbb{Z}_2$}}\hfil\\
\hfil{\rm{\footnotesize $7$}}\hfil& \hfil{\rm{\footnotesize $\mathbb{Z}_2\oplus\mathbb{Z}_2$}}\hfil\\
\hline
\end{tabular}
\end{table}

\end{example}

\section{\bf MORSE THEORY IN RICCI FLOW PDE's}

Let us now give the fundamental theorem that describe quantum tunnel
effects in solutions of PDEs, i.e., the change of sectional topology in the (singular) solutions of $(RF)$.

\begin{theorem}[Topology transitions as quantum tunnel effects in Ricci flow equation]\label{tunnel-effect-pde}
Let $N_0, N_1\subset(RF)\subset J{\it D}^2(E)$ be space-like Cauchy manifolds of $(RF)$, at two different times $t_0\not= t_1$. Let $V\subset (RF)$ be a (singular) solution such that $\partial V=N_0\sqcup N_1$. Then there exists an admissible Morse function $f:V\to [a,b]\subset\mathbb{R}$ such that:

{\em(A) (Simple quantum tunnel  effect).} If $f$ has a critical point
$q$ of index $k$ then there exists a $k$-cell
$e^k\subset V-N_1$  and an $(n-k+1)$-cell
 ${e_*}^{n-k+1}\subset V-N_0$ such that:

{\em(i)} $e^k\cap N_0=\partial e^k$;

{\em(ii)} ${e_*}^{n-k+1}\cap N_1=\partial {e_*}^{n-k+1}$;

{\em(iii)} there is a
deformation retraction of $V$ onto $N_0\cup e^k$;

{\em(iv)} there is a deformation retraction of $V$ onto $N_1\cup {e_*}^{n-k+1}$;

{\em(v)} ${e_*}^{n-k+1}\cap e^k=q$; ${e_*}^{n-k+1} \pitchfork
e^k$.

{\em(B) (Multi quantum tunnel  effect).} If $f$ is of
type $(\nu _0,\cdots,\nu_{n+1})$ where $\nu_k$ denotes the number of
critical points with index $k$ such that  $f$ has only one critical
values $c$, $a<c<b$, then there are disjoint $k$-cells $e^k_i\subset
V\setminus N_1$ and disjoint $(n-k+1)$-cells $(e_*)^{n-k+1}_i\subset
V\setminus N_0$, $1\le  i\le \nu_k$, $k=0,\cdots,n+1$,
such that:

{\em(i)} $e^k_i\cap N_0=\partial e^k_i$;

{\em(ii)} ${e_*}^{n-k+1}_i\cap N_1=\partial {e_*}^{n-k+1}_i$;

{\em(iii)} there
is a deformation retraction of $V$ onto $N_0\bigcup
\left\{\cup_{i,k}(e_*)^k_i\right\}$;

{\em(iv)} there is a
deformation retraction of $V$ onto $N_1\bigcup\left\{\cup_{i,k}
(e_*)^{n-k}_i\right\}$;

{\em(v)} $(e_*)^{n-k}_i\cap e^k_i=q_i$;
$(e_*)^{n-k+1}_i \pitchfork  e^k_i$.

{\em(C) (No topology transition).} If $f$ has no critical point then
$V\cong N_0\times I$ where $I\equiv[0,1]$.\end{theorem}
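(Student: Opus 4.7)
The plan is to reduce the entire statement to classical Morse theory on the cobordism $V$ between $N_0$ and $N_1$ inside $(RF)$, exactly in the form presented in Milnor's treatment of the $h$-cobordism theorem. Since $V\subset (RF)$ is an $(n+1)$-dimensional (singular) integral manifold with $\partial V=N_0\sqcup N_1$, where each $N_i$ is an $n$-dimensional space-like Cauchy manifold, the boundary decomposition already has the dimensions required: descending unstable manifolds of a critical point of index $k$ are $k$-cells, and the complementary ascending stable manifolds have dimension $(n+1)-k=n-k+1$, matching the statement.

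First I would construct the admissible Morse function $f\colon V\to[a,b]$. I would start from the pull-back of the time coordinate $t$ to $V$, which by construction sends $N_0\mapsto t_0$ and $N_1\mapsto t_1$ with no critical points near $\partial V$ (since both are space-like). On the possibly singular locus of $V$, I would invoke the standard resolution/smoothing argument for weak-singular solutions used earlier in the series (and in \cite{PRA14}): off a nowhere-dense singular stratum $V$ is smooth, and one can perturb $t|_V$ by a compactly supported function, supported away from $\partial V$ and from the singular locus, so that the resulting function has only non-degenerate critical points (Sard plus the density of Morse functions). Admissibility, i.e.\ compatibility with the Cartan distribution of $J{\it D}^2(E)$, is preserved because perturbations can be taken inside the class of functions coming from the underlying PDE structure, in the sense explained in the earlier sections.

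Granted such an $f$, part (A) is the Morse Lemma applied at the single critical point $q$: in canonical coordinates $f=f(q)-x_1^2-\cdots-x_k^2+x_{k+1}^2+\cdots+x_{n+1}^2$, and $e^k$, $e_*^{n-k+1}$ are respectively the closures in $V$ of the downward and upward gradient trajectories issuing from $q$. Conditions (i)--(ii) follow because these trajectories meet the regular level sets $N_0,N_1$ transversely along the boundary spheres of the cells; (iii)--(iv) are the textbook deformation retractions obtained by flowing along $-\nabla f$ (resp.\ $+\nabla f$) and stopping when one hits the attached cell; and (v) is automatic since the two cells are the images of complementary eigenspaces of the Hessian of $f$ at $q$, which are transverse in $T_qV$ and meet only at $q$. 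For part (B) the argument is identical at each critical point $q_i$, performed simultaneously inside pairwise disjoint Morse charts: since all the $q_i$ lie on the same level $c$ and the gradient flow is complete on $V\setminus\{q_i\}$, the finitely many descending (respectively ascending) trajectory bundles do not interfere, so the cells can be chosen pairwise disjoint and the deformation retractions glue. For part (C), the vector field $X=\nabla f/|\nabla f|^2$ is everywhere defined and satisfies $X\cdot f=1$; its flow yields the required diffeomorphism $V\cong N_0\times[0,1]$.

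The main technical obstacle is the first step: producing an \emph{admissible} Morse function on a solution that is only weak-singular, i.e.\ guaranteeing that the perturbation used to achieve non-degeneracy of critical points can be performed inside the class of functions compatible with the PDE's integral structure and that it does not create pathologies at the singular stratum. Once this is in hand, the rest is a direct transcription of finite-dimensional Morse theory for cobordisms, and the statement follows.
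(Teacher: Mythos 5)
Your proposal follows essentially the same route as the paper: the paper proves this theorem simply by adapting classical Morse theory of cobordisms to $(RF)$ (it cites Theorem 23 of \cite{PRA00}) and then records the same supporting ingredients — Morse--Smale functions and separatrix diagrams, the cell attachments and deformation retractions, and the product structure when there are no critical points — that you reconstruct directly from Milnor's treatment of the $h$-cobordism setting. The one step you single out as the technical obstacle, namely producing an \emph{admissible} Morse function compatible with the PDE structure on a weak-singular solution, is precisely the step the paper also leaves to the cited reference rather than proving in place, so your sketch matches the paper's argument both in substance and in what it leaves implicit.
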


\begin{proof} The
proof can be conducted by adapting to the Ricci flow equation $(RF)$ Theorem 23  in \cite{PRA00}. Let us emphasize here some important lemmas only.

\begin{lemma}[Morse-Smale functions]\label{morse-smale-functions}
{\em 1)} On a closed connected compact smooth manifold $M$, there exists a Morse function $f:M\to\mathbb{R}$ such that the critical values are ordened with respect to the indexes, i.e., $f(x_\lambda)=f(x_\mu)$, if $\lambda=\mu$, and $f(x_\lambda)>f(x_\mu)$, if $\lambda>\mu$, where $x_\lambda$, (resp. $x_\mu$), is the critical point of $f$ with index $\lambda$ (resp. $\mu$). Such functions are called {\em regular functions}, or {\em Morse-Smale functions}, and are not dense in $C^\infty(M,\mathbb{R})$, as Morse functions instead are. Furthermore, such functions can be chosen in such a way that they have an unique maximum point (with index $\lambda=n=\dim M$), and an unique minimum point (with index $\lambda=0$).

{\em 2)} To such functions are associated vector fields $\zeta=\GRAD f:M\to TM$ such that $\zeta(x_\lambda)=0$ iff $x_\lambda$ is a critical point. Then in a neighborhood of a $x_\lambda$, the integral curves of $\zeta$ are of two types: ingoing in $x_\lambda$, and outgoing from $x_\lambda$. These fit in two different disks $D^\lambda$ and $D^{n-\lambda}$ contained in $M$ called {\em separatrix diagram}. (See Fig. \ref{passing-through-critical-point-attaching-handle}.)
\end{lemma}

\begin{lemma}[Morse functions and CW complexes]\label{morse-functions-and-cw-complexes}
{\em 1)} Let $M$ be a compact $n$-dimension manifold and $f:M\to[a,b]$ an admissible Morse function of type $(\nu_0,\cdots,\nu_n)$ such that $\partial M=f^{-1}(b)$. Then $M$ has the homotopy type of a finite CW complex having $\nu_k$ cells at each dimension $k=0,\cdots,n$. Furthermore $(M,\partial M)$ has the homotopy type of a CW-pair of dimension $n$.

Furthermore $(M,f^{-1}(a))$ has the homotopy of relative CW complex having $\nu_k$ cells of dimension $k$, for each $k=0,\cdots,n$.\footnote{A {\em relative CW complex} $(Y,X)$ is a space $Y$ and a closed subspace $X$ such that $Y=\bigcup\limits^{\infty}_{r=-1}Y_r$, such that $X=Y_{-1}\subset Y_{0}\subset\cdots$, and $Y_r$ is obtained from $Y_{r-1}$ by attaching $r$-cells.}

{\em 2)} An $n$-dimension manifold $M$ has the homotopy type of a CW complex of dimension $\le n$.

{\em 3)} Let $M$ be a compact manifold and $N\subset M$ a compact submanifold with $\partial N=\partial M=\varnothing$. Then $(M,N)$ has the homotopy type of CW pair.

{\em 4)} The cell decomposition of a closed connected compact smooth manifold $M$, related to a Morse-Smale function, is obtained attaching step-by-step a cell of higher dimension to the previous ones.
\end{lemma}

\begin{lemma}[Homological Euler characteristic]\label{homological-euler-characteristic}
{\em 1)} If the compact solution $V$ of $(RF)$ is characterized by an admissible Morse function $f:V\to[a,b]$ of type $(\nu_0,\cdots,\nu_{n+1})$, then its homological Euler characteristic $\chi_{hom}(V)$ is given by the formula {\em(\ref{homological-euler-characteristic-solution-ricci-flow-equation})}.
\begin{equation}\label{homological-euler-characteristic-solution-ricci-flow-equation}
  \chi_{hom}(V)=\sum_{0\le k\le(n+1)}(-1)^{k}\beta_k,\hskip 4pt \beta_k=\dim_F H_k(V,f^{-1}(a);F)
\end{equation}
where $F$ is any field.\footnote{Let $H_k(Y,X;F)$ denote the singular homology group of the pair $(Y,X)$ with coefficients in the field $F$. $\beta_k=\dim_F H_k(Y,X;F)$ are called the {\em$F$-Betti numbers} of $(Y,X)$. If these numbers are finite and only finitely are nonzero, then the {\em homological Euler characteristic} of $(Y,X)$ is defined by the formula: $\chi_{hom}(Y,X)=\sum_{0\le k\le\infty}(-1)^{k}\beta_k$. When $Y$ is a compact manifold and $X$ is a compact submanifold, then $\chi_{hom}(Y,X)$ is defined. The evaluation of homological Euler characteristic for topological spaces coincides with that of Euler characteristic for CW complexes $X$ given by $\chi(X)=\sum_{i\ge 0}(-1)^{i}k_i$, where $k_i$ is the number of cells of dimension $i$. For closed smooth manifolds $M$, $\chi(M)$ coincides with the {\em Euler number}, that is the Euler class of the tangent bundle $TM$, evalued on the fundamental class of $M$. For closed Riemannian manifolds, $\chi(M)$ is given as an integral on the curvature, by the {\em generalized Gauss-Bonnet theorem}: $\chi(V)=\frac{1}{(2\pi)^n}\int_VPf(\Omega)$, where $\partial V=\varnothing$, $\dim V=2n$, $\Omega$ is the curvature of the Levi-Civita connection and $Pf(\Omega)=\frac{1}{2^nn!}\sum_{\sigma\in S_{2n}}\epsilon(\sigma)\prod_{i=1}^{n}\Omega_{\sigma(2i-1)\sigma(2i)}$, where $(\Omega_{rs})$ is the skew-symmetric $(2n)\times(2n)$ matrix representing $\Omega:V\to\mathfrak{so}(2n)\bigotimes\Lambda^0_{2}(V)$, hence $Pf(\Omega):V\to \Lambda^0_{2n}(V)$. In Tab. \ref{properties-euler-characteristic-and-examples} are reported some important properties of Euler characteristic, that are utilized in this paper.}
Furthermore, if $f^{-1}(a)=\varnothing$, then $\beta_k$ in {\em(\ref{homological-euler-characteristic-solution-ricci-flow-equation})} is given by $\beta_k=\dim_F H_k(V;F)$, and $\chi_{hom}(V)=\chi(V)$.

{\em 2)} If $M$ is a compact odd dimensional manifold with $\partial M=\varnothing$, then $\chi_{hom}(M)=0$.

{\em 3)} Let $M$ be a compact manifold such that its boundary can be divided in two components: $\partial M=\partial_{-}M\bigcup\partial_{+}M$, then $\chi_{hom}(M,\partial_{+}M)=\chi_{hom}(M,\partial_{-}M)$.

{\em 4)} Let $M$ be a compact manifold such that $\partial M=N_0\sqcup N_1$, with $N_i$, $i=0,1$, disjoint closed sets. Let $f:M\to\mathbb{R}$ be a $C^2$ map without critical points, such that $f(N_0)=0$, $f(N_1)=1$. Then one has the diffeomorphisms: $M\cong N_0\times I$, $M\cong N_1\times I$.

{\em 5)} Let $M$ be a compact $n$-dimensional manifold with $\partial M=\varnothing$, such that has a Morse function $f:M\to\mathbb{R}$ with only two critical points. Then $M$ is homeomorphic to $S^n$.
\end{lemma}
\begin{table}[t]
\caption{Euler characteristic $\chi$: properties and examples.}
\label{properties-euler-characteristic-and-examples}
\scalebox{0.55}{$\begin{tabular}{|l|l|l|l|}
  \hline
  \hfil{\rm{\footnotesize Definition}}\hfil& \hfil{\rm{\footnotesize $\chi$}}\hfil&\hfil{\rm{\footnotesize Remarks}}\hfil& \hfil{\rm{\footnotesize Examples}}\hfil\\
  \hline
{\rm{\footnotesize $\partial V=\varnothing$, $\dim V=2n+1$, $n\ge 0$}}& {\rm{\footnotesize $\chi(V)=0$}}&{\rm{\footnotesize (from Poincar\'e duality)}}& \\
  \hline
{\rm{\footnotesize $M\approxeq N$}}& {\rm{\footnotesize $\chi(M)=\chi(N)$}}&{\rm{\footnotesize from $H^\bullet(M)\cong H^\bullet(N)$}}& {\rm{\footnotesize $\chi(pt)=1$}}\\
{\rm{\footnotesize (homotopy equivalence)}}&&&{\rm{\footnotesize $\chi(S^n)=1+(-1)^n=0\hskip 3pt(n={\rm odd}), 2\hskip 3pt(n={\rm even})$}}\\
&&&{\rm{\footnotesize $\chi(D^3)=\chi(P^3)=\chi(\mathbb{R}^n)=1$, $P^3$={\rm convex polyhedron})}}\\
&&&{\rm{\footnotesize $\chi(S^2)=\chi(\partial P^3)=2$, $\partial P^3$={\rm surface convex polyhedron})}}\\
\hline
{\rm{\footnotesize $V=M\sqcup N$}}&{\rm{\footnotesize $\chi(V)=\chi(M)+\chi(N)$}}&{\rm{\footnotesize (from homology additivity)}}&{\rm{\footnotesize $\chi(\underbrace{S^2\sqcup\cdots\sqcup S^2}_{n})=2n$}}\\
  \hline
{\rm{\footnotesize excision couple}}&{\rm{\footnotesize $\chi(M\cup N)=\chi(M)+\chi(N)-\chi(M\cap N)$}}&& {\rm{\footnotesize $\chi(S^2)=\chi(D^2)+\chi(D^2)-\chi(S^1)=1+1-0=2$}}\\
{\rm{\footnotesize $(M,N)$}}&&& {\rm{\footnotesize $\chi(K_{lb})=\chi(M_{ob})+\chi(M_{ob})-\chi(S^1)=0+0-0=0$}}\\
&&& {\rm{\footnotesize $\chi(C_{rc}\bigcup_{S^1}D^2=\mathbb{R}P^2)=\chi(M_{ob})+\chi(D^2)-\chi(S^1)=0+1-0=1$}}\\
\hline
{\rm{\footnotesize $V=M\times N$}}&{\rm{\footnotesize $\chi(V)=\chi(M).\chi(N)$}}&&{\rm{\footnotesize $\chi(T^n)=\chi(\underbrace{S^1\times\cdots\times S^1}_{n})=0$}}\\
  \hline
{\rm{\footnotesize $p:V\to M$}}& {\rm{\footnotesize $\chi(V)=\chi(F).\chi(M)$}}&{\rm{\footnotesize (from Serre-spectral sequence)}}&{\rm{\footnotesize $S^n\to\mathbb{R}P^n$: $\chi(S^n)=\chi(\{1,-1\}).\chi(\mathbb{R}P^n)=2.\chi(\mathbb{R}P^n)$}}\\
{\rm{\footnotesize orientable fibration over field}}&&{\rm{\footnotesize (also from transfer map)}}&{\rm{\footnotesize $\chi(\mathbb{R}P^n)=0\hskip 3pt (n= {\rm odd}), 1 \hskip 3pt (n= {\rm even})$.}}\\
{\rm{\footnotesize with fibre $F$}}&&{\rm{\footnotesize ($\tau:H_\bullet(M)\to H_\bullet(V)$)}}&\\
{\rm{\footnotesize $M$ path-connected}}&&{\rm{\footnotesize ($p_\bullet\circ\tau=\chi(F).1_{H_\bullet(M)}$)}}&\\
 \hline
 {\rm{\footnotesize $p:V\to M$}}& {\rm{\footnotesize $\chi(V)=k.\chi(M)$}}&&{\rm{\footnotesize  $p:M_{ob}\to S^1$: $M_{ob}$=M\"obius strip: $\chi(M_{ob})=2\chi(S^1)=0$}}\\
  {\rm{\footnotesize $k$-sheeted covering}}&&&\\
\hline
{\rm{\footnotesize $V=\partial M$, $\dim M=2n$, $n\ge 0$}}& {\rm{\footnotesize $\chi(V)=2m$, $m\ge 0$}}&{\rm{\footnotesize (from excision couple)}}&{\rm{\footnotesize $\mathbb{R}P^{2n}\not=\partial M$, since $\chi(\mathbb{R}P^{2n})=1$}}\\
  \hline
  \multicolumn{4}{l}{\rm{\footnotesize $\chi:\Omega^O_{2i}\to\mathbb{Z}$ is a surjective mapping for $i\ge 1$ and an isomorphism for $i=1$.}}\\
\multicolumn{4}{l}{\rm{\footnotesize $\chi(\partial P^3)=V-E+F$, $V$=vertex-number, $E$=edge-number, $F$=face-number.}}\\
 \multicolumn{4}{l}{\rm{\footnotesize Closed oriented surfaces: $\chi=2-2g$, $g$=genus, (number of handles).}}\\
\multicolumn{4}{l}{\rm{\footnotesize Closed nonorientable surfaces: $\chi=2-\kappa$, $\kappa$=nonorientable genus, (number of real projective planes in a connected decomposition).}}\\
\multicolumn{4}{l}{\rm{\footnotesize Examples of nonorientable surfaces: $K_{lb}$= Klein bottle ($\partial K_{lb}=\varnothing$); $M_{ob}$= M\"obius strip ($\partial M_{ob}=S^1$); $\mathbb{R}P^2$= Projective plane ($\partial \mathbb{R}P^2=\varnothing$).}}\\
\multicolumn{4}{l}{\rm{\footnotesize Examples of nonorientable surfaces: $C_{rc}\approxeq M_{ob}$= cross-cap: surface homotopy equivalent to M\"obius strip.}}\\
\end{tabular}$}
\end{table}

\begin{lemma}
Let $X$ and $Y$ be closed compact differentiable manifolds without boundaries, then there exists a compact manifold $V$, such that $\partial V=X\sqcup Y$ iff $Y$ is obtained from $X$ by a sequence of surgeries. (For details see below Theorem \ref{handle-decomposition-bordisms}.)
\end{lemma}

\end{proof}

\begin{theorem}[Smooth solutions and characteristic vector fields]\label{change-topology-and-smooth-solutions}
The characteristic vector field $\xi$, propagating a space-like $n$-dimensional smooth, compact, Cauchy manifold $N\subset V$, where $V$ is a smooth solution of $(RF)$, hence a time-like, $(n+1)$-dimensional smooth integral manifold of $(RF)$, cannot have zero points.
\end{theorem}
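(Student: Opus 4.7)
The plan is to argue by contradiction, exploiting the topological consequences that a zero of $\xi$ would force via the quantum tunnel effect theorem just established. First I would recall that a smooth solution $V$ of $(RF)$ is, by definition, a smooth $(n+1)$-dimensional integral submanifold of the Cartan distribution on $J{\it D}^2(E)$, which is locally the image of the second-order jet-derivative of a smooth section $s$ of $\bar\pi: E \to \mathbb{R}\times M$. Consequently, the restriction $\pi_V \equiv (\bar\pi\circ\pi_{2,0})|_V : V \to \mathbb{R}\times M$ of the source projection is a diffeomorphism onto its image, and the time function $\tau \equiv t\circ \pi_V : V \to \mathbb{R}$ is a submersion. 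The characteristic vector field $\xi$ propagating the space-like Cauchy slice $N=N_{t_0}=\tau^{-1}(t_0)$ is, by its very construction, the $\pi_V$-horizontal lift of $\partial_t$ on $\mathbb{R}\times M$; equivalently, $\xi\in TV$ is the unique vector field on $V$ satisfying $d\tau(\xi)\equiv 1$.

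Suppose, for contradiction, that $\xi(q)=0$ for some $q\in V$. Then $q$ is a critical point, of some index $k$ with $0\le k\le n+1$, of an admissible Morse function on $V$ compatible with the time fibration (obtained, if necessary, as a small $C^\infty$-perturbation of $\tau$). Applying the simple quantum tunnel effect of Theorem \ref{tunnel-effect-pde} between the Cauchy slices $N_0=\tau^{-1}(t_0)$ and $N_1=\tau^{-1}(t_1)$, one would obtain a $k$-cell $e^k\subset V\setminus N_1$ attached along its boundary to $N_0$, together with a deformation retraction of $V$ onto $N_0\cup e^k$. In particular, slices at times on either side of $\tau(q)$ would differ by a $k$-handle attachment and therefore would not be mutually diffeomorphic. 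However, smoothness of $V$ forbids precisely such a topology transition: since $\pi_V$ is a diffeomorphism, every slice $N_s=\tau^{-1}(s)$ is diffeomorphic to $\{s\}\times M$, hence to $N_0$, and no handle attachment can occur. This contradiction shows that $\xi$ is nowhere zero. A shorter, more direct route yielding the same conclusion is to observe that, by construction, $d\tau(\xi)\equiv 1$ on $V$, so $0=d\tau_q(\xi(q))=1$ would be impossible.

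The subtle point I expect to require most care is the identification, on a smooth solution, of the characteristic vector field with the $\pi_V$-horizontal lift of $\partial_t$. This amounts to checking that, along $V$, the Cartan distribution decomposes as $TV=TN_{\tau(\cdot)}\oplus\langle\xi\rangle$, equivalently that the time direction is genuinely transverse to every space-like Cauchy slice $N_s\subset V$. This transversality is a direct consequence of $V$ being the graph of the second-order jet-derivative of a smooth section, together with the fact that the source projection $V\to\mathbb{R}\times M$ is a local diffeomorphism; once it is verified, either of the two routes above immediately closes the argument.
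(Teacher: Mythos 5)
Your proposal is correct and is essentially the paper's own argument: the paper identifies $\xi$ with the time-like total derivative $\zeta_0\equiv\partial x_0+\sum_{|\beta|\ge 0}y^j_{\alpha\beta}\partial y_j^{\beta}$ on $(RF)_{+\infty}$, whose $\partial x_0$-component is identically $1$, which is exactly your observation that $d\tau(\xi)\equiv 1$ forbids zeros (and the paper likewise adds the remark that the associated Morse function has no critical points). Your longer contradiction route via Theorem \ref{tunnel-effect-pde} is redundant but consistent; the decisive step in both texts is the same identification of $\xi$ as the horizontal lift of $\partial_t$ with unit time-component.
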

\begin{proof}
In fact, the characteristic vector field $\xi$ coincides with the time-like $\zeta_0\equiv\partial x_0+\sum_{|\beta|\ge 0}y^j_{\alpha\beta}\partial y_j^{\beta}$, where $y^j_{\alpha\beta}$ are determined by the infinity prolongation $(RF)_{+\infty}$ of $(RF)$. Therefore such a vector field cannot have zero points on a compact smooth solution $V$, of $(RF)$, such that $\partial V=N_0\sqcup N_1$. On the other hand, if $f:V\to\mathbb{R}$ is the Morse function whose gradient gives just the vector field $\xi$, then $f$ cannot have critical points.\footnote{Let us recall that a compact connected manifold $M$ with boundary $\partial M\not=\varnothing$, admits a nonvanishing vector field. Furthermore, a compact, oriented $n$-dimensional submanifold $M\subset\mathbb{R}^{2n}$ has a nonvanishing normal vector field. Therefore, above statements about smooth solutions of $(RF)$ agree with well known results of differential topology. (See, e.g. \cite{HIRSCH2}.}
\end{proof}

\begin{cor}
A $(n+1)$-dimensional smooth, compact, manifold $V\subset(RF)$, smooth solution of $(RF)$, such that $\partial V= N_0\sqcup N_1$, where $N_i$, $i=0,1$, are smooth Cauchy manifolds, cannot produce a change of topology from $N_0$ to $N_1$, hence these manifolds must necessarily be homeomorphic.
\end{cor}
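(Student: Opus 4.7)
The proposal is to derive this corollary as an immediate consequence of Theorem \ref{change-topology-and-smooth-solutions} combined with part (C) of Theorem \ref{tunnel-effect-pde} (equivalently Lemma \ref{homological-euler-characteristic}(4)). The conceptual chain is: smoothness of $V$ forces the characteristic vector field $\xi$ to be nowhere zero; but $\xi$ is (up to the admissibility convention) the gradient of the Morse function $f:V\to[a,b]$ provided by Theorem \ref{tunnel-effect-pde}; absence of zeros of $\xi$ therefore means absence of critical points of $f$; and a boundary-respecting Morse function with no critical points trivializes $V$ as a cylinder.

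Concretely, first I would invoke Theorem \ref{tunnel-effect-pde} to fix an admissible Morse function $f:V\to[a,b]$ adapted to the bordism, with $f^{-1}(a)=N_0$ and $f^{-1}(b)=N_1$, and identify (as is done in the proof of Theorem \ref{change-topology-and-smooth-solutions}) its gradient with the characteristic vector field $\xi=\zeta_0+\sum_{|\beta|\ge 0}y^j_{\alpha\beta}\partial y_j^\beta$ on $V\subset (RF)_{+\infty}$. Next, I would apply Theorem \ref{change-topology-and-smooth-solutions}: since $V$ is a smooth (time-like, $(n+1)$-dimensional) solution bordering the space-like Cauchy manifolds $N_0$ and $N_1$, the field $\xi$ cannot vanish anywhere on $V$. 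Hence the Morse function $f$ has no critical points in $V$.

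At this stage I would directly appeal to Theorem \ref{tunnel-effect-pde}(C) (or equivalently to Lemma \ref{homological-euler-characteristic}(4), with $f$ normalized so that $f(N_0)=0$ and $f(N_1)=1$) to conclude that there are diffeomorphisms
\begin{equation*}
V\cong N_0\times I\cong N_1\times I,\qquad I=[0,1].
\end{equation*}
Projecting onto the first factor in each identification yields $N_0\cong N_1$, and in particular $N_0\thickapprox N_1$, which is the desired conclusion; the corollary then prohibits any genuine change of topology along smooth compact solutions of $(RF)$.

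The only subtle step, and hence the main obstacle, is the identification of $\xi$ with $\GRAD f$ for an admissible Morse function $f$ in the precise sense required by Theorem \ref{tunnel-effect-pde}: one must verify that the integral-manifold structure on $V$ allows the characteristic flow to be realized as a gradient-type flow for some admissible $f$, so that ``no zeros of $\xi$'' translates cleanly into ``no critical points of $f$.'' Once this compatibility between the characteristic distribution on $(RF)_{+\infty}$ and the admissible Morse theory of $V$ is granted, the corollary reduces to a one-line combination of the two previously established results, with no further bordism-group or surgery input needed.
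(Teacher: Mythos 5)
Your argument is correct and coincides with the route the paper intends: the corollary is stated as an immediate consequence of Theorem \ref{change-topology-and-smooth-solutions} (whose proof already identifies the characteristic field $\xi$ with $\GRAD f$ and deduces that $f$ has no critical points) combined with Theorem \ref{tunnel-effect-pde}(C), giving $V\cong N_0\times I\cong N_1\times I$ and hence $N_0\cong N_1$. The ``subtle step'' you flag is exactly the identification already made in the paper's proof of Theorem \ref{change-topology-and-smooth-solutions}, so no additional input is needed.
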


The following theorem emphasizes the difference between homeomorphic manifolds and diffeomorphic ones.
\begin{theorem}[Exotic differentiable structures on compact smooth manifolds]\label{exotic-differentiable-structures-on-compact-smooth-manifolds}
Let $M$ and $N$ be $n$-dimensional  homeomorphic compact smooth manifolds. Then it does not necessitate that $M$ is diffeomorphic to $N$.
\end{theorem}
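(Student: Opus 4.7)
The plan is to exhibit explicit counterexamples. The classical case, due to Milnor, occurs in dimension $n=7$, and the argument splits in two stages: first construct a smooth manifold $\Sigma^7$ that is homeomorphic to $S^7$; then detect its non-standard smooth structure via a diffeomorphism invariant.

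For the construction, I would take the family of $S^3$-bundles over $S^4$ classified by $\pi_3(SO(4))\cong\mathbb{Z}\oplus\mathbb{Z}$. Each pair $(h,j)$ gives a total space $M_{h,j}$, and for $h+j=\pm 1$ the Euler class is $\pm 1$; the norm-squared of a generic section of the associated $D^4$-bundle descends to a Morse function $f:M_{h,j}\to\mathbb{R}$ with exactly two non-degenerate critical points. By Lemma \ref{morse-functions-and-cw-complexes}(5) this forces $M_{h,j}\approx S^7$, settling the homeomorphism part.

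To show that $M_{h,j}$ need not be diffeomorphic to $S^7$, introduce the Milnor invariant $\lambda(\Sigma)\in\mathbb{Z}/7\mathbb{Z}$, defined for any oriented smooth $\Sigma\approx S^7$. Since $\Omega_7=0$, such $\Sigma$ bounds a compact oriented smooth 8-manifold $W$, and one forms a $\mathbb{Z}/7$-linear combination of the signature $\sigma(W)$ and of the Pontryagin number $\langle p_1(W)^2,[W,\partial W]\rangle$, with coefficients chosen so that the Hirzebruch signature formula on closed smooth 8-manifolds,
\begin{equation*}
45\,\sigma(X)=\langle 7p_2(X)-p_1(X)^2,[X]\rangle,
\end{equation*}
forces the combination to vanish modulo $7$ whenever $\partial W=\varnothing$. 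Signature and Pontryagin additivity under boundary gluing then ensure that $\lambda(\Sigma)$ is independent of the bounding $W$, so that it becomes a diffeomorphism invariant of $\Sigma$. One has $\lambda(S^7)=0$, while an explicit computation on the disk-bundle $W_{h,j}$ that $M_{h,j}$ bounds yields $\sigma(W_{h,j})=\pm 1$ and a nontrivial value of $p_1^2$, producing infinitely many pairs $(h,j)$ with $\lambda(M_{h,j})\not\equiv 0\pmod{7}$, hence $M_{h,j}\not\cong S^7$.

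The main obstacle is verifying well-definedness of $\lambda$: given two bounding 8-manifolds $W,W'$ with $\partial W=\partial W'=\Sigma$, one must show that the chosen combination of characteristic numbers agrees modulo $7$ on $W$ and on $W'$. This reduces to applying the displayed signature identity to the closed manifold $W\cup_{\Sigma}(-W')$, using additivity of $\sigma$ and of Pontryagin numbers under boundary gluing, together with integrality of Pontryagin numbers on closed smooth manifolds. Once this algebraic point is secured, the rest of the proof is a routine characteristic-class computation on the sphere-bundle $M_{h,j}$ and its associated disk-bundle.
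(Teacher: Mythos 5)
Your proposal is correct, but it follows a genuinely different route from the one the paper uses to prove this theorem. The paper's own proof never exhibits a counterexample: it argues through the approximation lemmas of Hirsch (Lemma \ref{continuous-mappings-dense-in-differentiable-mappings}, Lemma \ref{lower-differentiable-mappings-dense-in-higher-differentiable-ones}, Lemma \ref{lower-differentiable-mappings-dense-in-higher-differentiable-ones-compact-manifolds-with-boundary}) that the homeomorphisms $f:M\to N$, $g:N\to M$ can be smoothed but that the smooth approximations need not be diffeomorphisms, and concludes from this that homeomorphism does not force diffeomorphism; the explicit Milnor spheres appear only afterwards, in an Example, phrased as a contradiction argument (a hypothetical diffeomorphism $V(\omega_k)\cong S^7$ produces a closed smooth $8$-manifold $W(\omega_k)\cup_f D^8$ on which Hirzebruch's signature theorem forces $k^2\equiv 1\ (\MOD\ 7)$). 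You instead prove the theorem the way Milnor did: construct the $S^3$-bundles over $S^4$, get the homeomorphism $M_{h,j}\approx S^7$ from a two-critical-point Morse function via Lemma \ref{morse-functions-and-cw-complexes}(5) (Reeb), and then build the $\bmod\ 7$ invariant $\lambda$ from $\sigma(W)$ and $\langle p_1(W)^2,[W,\partial W]\rangle$ of a bounding $8$-manifold, with well-definedness secured by applying the signature formula to $W\cup_\Sigma(-W')$ together with Novikov additivity (and, implicitly, the fact that $H^3(\Sigma)=H^4(\Sigma)=0$ makes the relative Pontrjagin number unambiguous). What your route buys is logical completeness: a statement of the form ``it does not necessitate'' is an existence claim, and exhibiting $M_{h,j}\not\cong S^7$ with $\lambda\not\equiv 0$ settles it outright, whereas the paper's density argument by itself only shows that one strategy for producing a diffeomorphism fails and leans on the subsequent Example for the real content; moreover your $\lambda$-invariant formulation avoids assuming the diffeomorphism to build a closed manifold, at the modest cost of having to verify well-definedness. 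The only soft spot in your write-up is the assertion that the norm-squared of a generic section of the disk bundle descends to a Morse function with exactly two critical points; Milnor's function is a specific explicit one, and you should either quote it or otherwise justify the two-critical-point claim, but this is a standard and repairable detail, not a gap in the strategy.
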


\begin{proof}
Since $M$ is considered homeomorphic to $N$, there exist continuous mappings $f:M\to N$ and $g:N\to M$, such that $g\circ f=id_M$, and $f\circ g=id_N$. Let us consider, now, the following lemma.
\begin{lemma}\label{continuous-mappings-dense-in-differentiable-mappings}
Let $M$ and $N$ be $C^s$ manifolds, $1\le s\le\infty$, without boundary. Then $C^s(M,N)$ is dense in $C^r_S(M,N)$, (in the strong topology), $0\le r<s$.
\end{lemma}

\begin{proof}
See, e.g., \cite{HIRSCH2}.
\end{proof}
From Lemma \ref{continuous-mappings-dense-in-differentiable-mappings} we can state that the above continuous mappings $f$ and $g$ can be approximated with differentiable mapping, but these do no necessitate to be diffeomorphisms. In fact we have the following lemma.

\begin{lemma}\label{lower-differentiable-mappings-dense-in-higher-differentiable-ones}
Let $G^k(M,N)\subset C^k(M,N)$, $k\ge 1$, denote any one of the following subsets: diffeomorphisms, embeddings, closed embeddings, immersions, submersions, proper maps. Let $M$ and $N$ be compact $C^s$ manifolds, $1\le s\le\infty$, without boundary. Then $G^s(M,N)$ is dense in $G^r(M,N)$ in the strong topology, $1\le r<s$. In particular, $M$ and $N$ are $C^s$ diffeomorphic iff they are $C^r$ diffeomorphic with $r\ge 1$.
\end{lemma}

\begin{proof}
See, e.g., \cite{HIRSCH2}.
\end{proof}
Above lemma can be generalized also to compact manifolds with boundary. In fact, we have the following lemma.

\begin{lemma}\label{lower-differentiable-mappings-dense-in-higher-differentiable-ones-compact-manifolds-with-boundary}
Let us consider compact manifolds with boundary. Then the following propositions hold.

{\em (i)} Every $C^r$ manifold $M$, $1\le r<\infty$, is $C^\infty$ diffeomorphic to a $C^\infty$ manifold and the latter is unique up to $C^\infty$ diffeomorphisms.

{\em (ii)}  Let $(M,\partial M)$ and $(N,\partial N)$, be $C^s$ manifold pairs, $1\le s\le \infty$. Then, the inclusion $C^s(M,\partial M;N,\partial N)\hookrightarrow C^r(M,\partial M;N,\partial N)$, $0\le r<s$, is dense in the strong topology. If, $1\le r<s$ and $(M,\partial M)$ and $(N,\partial N)$ are $C^r$ diffeomorphic, they are also $C^s$ diffeomorphic.
\end{lemma}

\begin{proof}
See, e.g., \cite{HIRSCH2}.
\end{proof}

Therefore, it is not enough to assume that compact smooth manifolds should be homeomorphic in order to state that they are also diffeomorphic, hence the proof of Theorem \ref{exotic-differentiable-structures-on-compact-smooth-manifolds} is complete. (To complement Theorem \ref{exotic-differentiable-structures-on-compact-smooth-manifolds} see also Lemma \ref{hirsch-munkres-lemma} and Lemma \ref{kirby-siebenman-lemma} below.)
\end{proof}
From Theorem \ref{exotic-differentiable-structures-on-compact-smooth-manifolds} we are justified to give the following definition.

\begin{definition}\label{definition-exotic-manifolds}
Let $M$ and $N$ be two $n$-dimensional smooth manifolds that are homeomorphic but not diffeomorphic. Then we say that $N$ is an {\em exotic substitute} of $M$.
\end{definition}

\begin{example}
The sphere $S^7$ has 28 exotic substitutes, just called {\em exotic $7$-dimensional spheres}. (See \cite{MILNOR, KERVAIRE-MILNOR}.) These are particular $7$-dimensional manifolds, built starting from oriented fiber bundle pairs over $S^4$. More precisely let us consider $(D^4,S^3)\to (W,V)\to S^4$. The $4$-plane bundle $D^4\to S^4$ is classified by the isomorphism $[S^4,BSO(4)]\cong \mathbb{Z}\bigoplus\mathbb{Z}$, given by $\omega\mapsto(\frac{1}{4}(2\chi(\omega)+p_1(\omega)),\frac{1}{4}(2\chi(\omega)-p_1(\omega)))$, where $\chi(\omega),p_1(\omega)\in H^4(S^4)=\mathbb{Z}$ are respectively the Euler number and the Pontrjagin class of $\omega$, (related by the congruence $p_1(\omega)=2\chi(\omega)\: (\MOD\: 4)$. Let us denote by $(W(\omega),V(\omega))$ the above fiber bundle pair identified by $\omega$. The homology groups of $V(\omega)\to S^4$, are given in {\em(\ref{homology-groups-3-sphere-bundle-over-4-sphere})}.\footnote{Recall that an odd dimensional oriented compact manifold $M$, with $\partial M=\varnothing$ has $\chi(M)=0$. In particular $\chi(S^{2k+1})=0$, instead $\chi(S^{2k})=2$. Furthermore, if $M$ and $N$ are compact oriented manifolds with $\partial M=\partial N=\varnothing$, then $\chi(M\times N)=\chi(M)\chi(N)$.}
\begin{equation}\label{homology-groups-3-sphere-bundle-over-4-sphere}
    H_p(V(\omega))=\left\{
    \begin{array}{l}
     \mathbb{Z}\hskip 5pt\hbox{\rm if $p=0,7$}\\
     \COKER( \chi(\omega):\mathbb{Z}\to\mathbb{Z})\hskip 5pt\hbox{\rm if $p=3$} \\
     \ker( \chi(\omega):\mathbb{Z}\to\mathbb{Z})\hskip 5pt\hbox{\rm if p=4} \\
     0\hskip 5pt\hbox{\rm otherwise}.
    \end{array}
    \right.
\end{equation}
The Euler number $\chi(\omega)$ is the Hopf invariant of $J(\omega)\in\pi_{7}(S^4)$, i.e., $\chi(\omega)=Hopf(J(\omega))\in\mathbb{Z}$. If $\chi(\omega)=1\in\mathbb{Z}$, then $V(\omega)$ is a homotopy $7$-sphere which is boundary of an oriented $8$-dimensional manifold $W(\omega)$. In fact ${}^+\Omega_7=0$ and for $\chi(\omega)=1\in\mathbb{Z}$ one has $H_p(V(\omega))=H_p(S^7)$. Let $k$ be an odd integer and let $\omega_k:S^4\to BSO(4)$ be the classifying map for orientable $4$-plane bundle over $S^4$ with $p_1(\omega_k)=2k$, $\chi(\omega_k)=1\in\mathbb{Z}$. There exists a Morse function $V(\omega_k)\to\mathbb{R}$ with two critical points, such that $V(\omega_k)\setminus\{pt\}\cong\mathbb{R}^7$, hence $V(\omega_k)$ is homeomorphic to $S^7$. Let us investigate under which conditions $V(\omega_k)$ is diffeomorphic to $S^7$ too. So let us assume that such diffeomorphism $f:V(\omega_k)\cong S^7$ exists. Then let us consider the closed oriented $8$-dimensional manifold $M=W(\omega_k)\bigcup_{f}D^8$. For such a manifold we report in {\em(\ref{inresection-form-signature-8-dimensional-manifold-built-starting-4-plane-bundle})} its intersection form and signature.
\begin{equation}\label{inresection-form-signature-8-dimensional-manifold-built-starting-4-plane-bundle}
   \left\{
   \begin{array}{l}
    (H^4(M),\lambda)=(\mathbb{Z},1)\\
    \sigma(M)=\sigma(H^4(M),\lambda)=1.\\
   \end{array}
   \right.
\end{equation}
By the Hirzebruch signature theorem one has $\sigma(M)=<\mathcal{L}_2(p_1,p_2),[M]>=1\in\mathbb{Z}$, with $<\mathcal{L}_2(p_1,p_2),[M]>=\frac{1}{45}(7p_2(M)-p_1(M)^2)=1\in H^8(M)=\mathbb{Z}$, $p_1(M)=2k$, $p_2(M)=\frac{1}{7}(45+4k^2)=\frac{4}{7}(k^2-1)+7\in H^4(M)=\mathbb{Z}$. Since $p_2(M)$ is an integer, it follows that must be $k^2\equiv 1\hskip 2pt(\MOD\: 7)$. This condition on $k$, comes from the assumption that $V(\omega_k)$ is diffeomorphic to $S^7$, therefore, it follows that under the condition $k^2\not\equiv 1\hskip 2pt(\MOD\: 7)$, $V(\omega_k)$ can be only homeomorphic to $S^7$, but not diffeomorphic, hence it is an exotic sphere, and $M$ is only a $8$-dimensional topological manifold, to which the Hirzebruch signature theorem does not apply.
\end{example}

\begin{example}
The $4$-dimensional affine space $\mathbb{R}^4$ has infinity exotic substitutes, just called {\em exotic $\mathbb{R}^4$}. (See \cite{DONALDSON, FREEDMAN}.)\end{example}

The surgery theory is a general algebraic topological framework to decide if a homotopy equivalence between $n$-dimensional manifolds is a diffeomorphism. (See, e.g. \cite{WALL2}.) We shall resume here some definitions and results about this theory. In the following section we will enter in some complementary informations and we will continue to develop such approach in connection with other algebraic topological aspects.

\begin{definition}\label{geometric-poincare-complex}
An {\em $n$-dimensional geometric Poincar\'e complex} is a finite CW complex such that one has the isomorphism $H^p(X;\Lambda)\cong H_{n-p}(X;\Lambda)$, induced by the cap product, i.e. $[\omega]\mapsto [X]\cap[\omega]$, for every $\mathbb{Z}[\pi_1(X)]$-module $\Lambda$.
\end{definition}

\begin{theorem}[Geometric Poincar\'e complex properties]\label{properties-geometric-poincare-complex}
{\em 1)} An $n$-dimensional manifold is an $n$-dimensional geometric Poincar\'e complex.

{\em 2)} Let $X$ be a  geometric Poincar\'e complex and $Y$ another CW complex homotopy related to $X$. Then also $Y$ is a  geometric Poincar\'e complex.

{\em 3)} Any CW complex homotopy equivalent to a manifold is a  geometric Poincar\'e complex.

{\em 4)} Geometric Poincar\'e complexes that are not homotopy equivalent to a manifold may be obtained by gluing together $n$-dimensional manifolds with boundary, $(M,\partial M)$, $(N,\partial N)$, having an homotopic equivalence on the boundaries, $f:\partial M\approxeq\partial N$, which is not homotopic to a diffeomorphism.

{\em 5) (Transfer or Umkehr map).} Let $f:N\to M$ be a mapping between oriented, compact, closed manifolds of arbitrary dimensions. Then the Poincar\'e duality identifies an homomorphism $\tau:H^\bullet(N;\mathbb{Z})\to H^{\bullet-d}(M;\mathbb{Z})$, where $d=\dim N-\dim M$. More precisely one has the commutative diagram {\em(\ref{transfer-map-definition})} that defines $\tau$.
\begin{equation}\label{transfer-map-definition}
  \xymatrix{H^\bullet(N;\mathbb{Z})\ar[d]_{D_N}\ar[r]^{\tau}&H^{\bullet-d}(M;\mathbb{Z})\\
 H_{\dim N-\bullet}(N;\mathbb{Z})\ar[r]_{f_*}&H_{\dim M-\bullet}(M;\mathbb{Z})\ar[u]^{D^{-1}_M}}
\end{equation}
where $d=\dim N-\dim M$. $D_N$ and $D_M$ are the Poincar\'e isomorphisms on $N$ and $M$ respectively. One has $\tau(f^*(x)\cup y)=x\cup\tau(y)$, $\forall x\in H^\bullet(M;\mathbb{Z})$ and $y\in H^\bullet(N;\mathbb{Z})$.\footnote{If $f:N\to M$ is an orientable fiber bundle with compact, orientable fiber $F$, integration over the fiber provides another definition of the transfer map: $\tau:H^\bullet_{de-Rham}(N)\to H_{de-Rham}(M)^{\bullet-r}$, where $r=\dim F$.}

In particular when $f:\widetilde{M}\to M$ is a covering map, then one can write $\tau(x)(\sigma)=x(\sum_{f(\widetilde{\sigma})=\sigma})\widetilde{\sigma}$, $\forall x\in C^\bullet(\widetilde{M})$ and $\sigma\in C_\bullet(M)$.
\end{theorem}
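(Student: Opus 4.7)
The plan is to address the five clauses in order, drawing almost entirely on classical Poincar\'e duality for manifolds and the naturality of the cap product.

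For clause (1), I would invoke the standard Poincar\'e duality theorem: a closed $n$-dimensional manifold $M$ carries a CW structure and a fundamental class $[M]\in H_n(M;\mathbb{Z}^{w})$, where $\mathbb{Z}^{w}$ is the orientation local system (untwisted if $M$ is orientable), such that $[M]\cap(-):H^p(M;\Lambda)\to H_{n-p}(M;\Lambda)$ is an isomorphism for every $\mathbb{Z}[\pi_1(M)]$-module $\Lambda$. Definition \ref{geometric-poincare-complex} is precisely the CW-theoretic abstraction of this property, so (1) follows immediately; for manifolds with boundary one uses the Poincar\'e--Lefschetz version.

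For clauses (2) and (3), homotopy invariance of singular (co)homology with local coefficients, together with the naturality of the cap product, does the work. A homotopy equivalence $h:X\to Y$ induces an isomorphism $h_*$ on $\pi_1$, a bijection between $\mathbb{Z}[\pi_1]$-modules on $X$ and on $Y$, and isomorphisms $h_*$, $h^*$ on homology and cohomology with any coefficients. Sending a fundamental class $[X]$ to $h_*[X]$ and chasing the naturality square for $\cap$ transports the duality isomorphism from $X$ to $Y$. Part (3) is then the conjunction of (1) and (2).

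For clause (4), my proposal is constructive. Given compact $n$-manifolds with boundary $(M,\partial M)$, $(N,\partial N)$ and a homotopy equivalence $f:\partial M\approxeq\partial N$, form the CW complex $X:=M\cup_f N$. Using the Mayer--Vietoris sequence together with relative Poincar\'e--Lefschetz duality on each of $(M,\partial M)$ and $(N,\partial N)$, one glues the two relative fundamental classes into a class $[X]\in H_n(X)$ and checks that cap product with $[X]$ gives the required isomorphism; the key input is that $f$, being a homotopy equivalence, induces an isomorphism on the boundary Mayer--Vietoris terms so the glued class behaves as a fundamental class. When $f$ is not homotopic to any diffeomorphism, $X$ cannot be smoothed to a manifold along the seam, so it is a Poincar\'e complex not homotopy equivalent to a manifold; a model case is gluing two disks $D^n$ along an exotic $(n-1)$-sphere.

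For clause (5), the transfer map is forced by the diagram (\ref{transfer-map-definition}): set $\tau:=D_M^{-1}\circ f_*\circ D_N$, so the square commutes by definition. To verify $\tau(f^*(x)\cup y)=x\cup\tau(y)$, one computes $D_N(f^*(x)\cup y)=f^*(x)\cap D_N(y)$ using compatibility of cap with cup, pushes it forward by $f_*$ via the classical projection formula $f_*(f^*(x)\cap z)=x\cap f_*(z)$, and then applies $D_M^{-1}$ to recover the right-hand side. The main obstacle lies in clause (4): verifying that the Mayer--Vietoris glued class is genuinely a fundamental class for $X$, and that the failure of $X$ to be a manifold is precisely detected by the smoothing obstruction of $f$, requires genuine geometric topology, whereas the remaining clauses are formal consequences of classical duality and the naturality of the cap product.
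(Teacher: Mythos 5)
Your treatment of clauses (1)--(3) and (5) is correct and entirely standard: Poincar\'e(--Lefschetz) duality with local coefficients gives (1), homotopy invariance of (co)homology plus naturality of the cap product gives (2) and (3), and in (5) the definition $\tau:=D_M^{-1}\circ f_*\circ D_N$ together with the projection formula $f_*(f^*(x)\cap z)=x\cap f_*(z)$ and $ (f^*x\cup y)\cap[N]=f^*x\cap(y\cap[N])$ yields $\tau(f^*(x)\cup y)=x\cup\tau(y)$ exactly as you say. Note that the paper itself offers no proof of this theorem at all: it is quoted as a package of standard surgery-theoretic facts (essentially from \cite{WALL2}), so there is no ``paper proof'' to compare with; your write-up supplies arguments where the paper gives none, and for these clauses it does so correctly.

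The genuine gap is in clause (4). First, your concluding inference is invalid: from the fact that the particular gluing $X=M\cup_f N$ cannot be smoothed ``along the seam'' you cannot conclude that $X$ is not homotopy equivalent to \emph{any} closed manifold --- the manifold realizing the homotopy type need not be obtained from this decomposition. To prove non-realizability you need an obstruction that is a homotopy invariant of $X$, e.g.\ showing that the Spivak normal fibration of $X$ admits no vector bundle reduction, or that every degree~$1$ normal map to $X$ has nonzero surgery obstruction (compare Theorem \ref{obstructions-manifold-structures} in the paper); your Mayer--Vietoris construction of the fundamental class is fine, but it only establishes that $X$ is a Poincar\'e complex, which is the easy half. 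Second, your ``model case'' fails: every self-homotopy-equivalence of $S^{n-1}$ has degree $\pm1$ and is therefore homotopic to a diffeomorphism (the identity or a reflection), and gluing two copies of $D^n$ along \emph{any} boundary diffeomorphism produces a twisted sphere, i.e.\ a genuine smooth manifold homeomorphic to $S^n$; so this construction can never produce a Poincar\'e complex that is not homotopy equivalent to a manifold. A correct proof of clause (4) must exhibit a concrete pair $(M,\partial M)$, $(N,\partial N)$ and boundary homotopy equivalence $f$ for which a homotopy-invariant surgery obstruction of $M\cup_f N$ is nonzero; as it stands, this clause of your proposal does not go through.
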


\begin{definition}\label{manifold-structure}
Let $X$ be a closed $n$-dimensional geometric Poincar\'e complex. A {\em manifold structure} $(M,f)$ on $X$ is a closed $n$-dimensional manifold $M$ together with a homotopy equivalence $f:M\approxeq X$. We say hat such two manifold structures $(M,f)$, $(N,g)$ on $X$ are equivalent if there exists a bordism $(F;f,g):(V;M,N)\to X\times(I;\{0\},\{1\})$, with $F$ a homotopy equivalence. (This means that $(V;M,N)$ is an h-cobordism, (see the next section).) Let $\mathfrak{S}(X)$ denote the set of such equivalence classes. We call $\mathfrak{S}(X)$ the {\em manifold structure set} of $X$. $\mathfrak{S}(X)=\varnothing$ means that $X$ is without manifold structures.
\end{definition}

\begin{theorem}[Manifold structure set properties]\label{properties-manifold-structure-set}
{\em 1)} $\mathfrak{S}(X)$ is homotopy invariant of $X$, i.e., a homotopy equivalence $f:X\approxeq Y$ induces a bijection $\mathfrak{S}(X)\to\mathfrak{S}(Y)$.

{\em 2)} A homotopy equivalence $f:M\approxeq N$ of $n$-dimensional manifolds determines an element $(M,f)\in\mathfrak{S}(N)$, such that $f$ is h-cobordant to $1:N\to N$ iff $(M,f)\in[(N,1)]\in\mathfrak{S}(N)$.

{\em 3)} Let $M$ be a $n$-dimensional closed differentiable manifold. If $\mathfrak{S}(X)=\{pt\}$ then $M$ does not admit exotic substitutes.

{\em 4) (Differential structures by gluing manifolds together).} Let $M$ and $N$ be $n$-dimensional manifolds such that their boundary are diffeomorphic: $\partial M\cong\partial N$. Let $\alpha$ and $\beta$ be two differential structures on $M\bigcup_f N$ that agree with the differential structures on $M$ and $N$ respectively. Then there exists a diffeomorphism $h:W_\alpha\cong W_\beta$ such that $h|_M=1_M$.

\end{theorem}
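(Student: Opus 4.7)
The plan is to treat the four parts separately, with (1) and (2) following formally from the definitions, (3) requiring the h-cobordism theorem, and (4) requiring the collar neighborhood theorem. For part (1), I would construct the induced map $f_{*}:\mathfrak{S}(X)\to\mathfrak{S}(Y)$ by post-composition, sending $[(M,g)]$ to $[(M,f\circ g)]$. Well-definedness is immediate: an h-cobordism $(V;M,M')\to X\times I$ whose total map is a homotopy equivalence composes with $f\times 1_I$ to give a homotopy equivalence over $Y\times I$. A homotopy inverse of $f$ supplies a two-sided inverse once one checks that homotopic structure maps yield the same class, which is achieved by the trivial bordism $M\times I\to X\times I$, $(m,t)\mapsto (H(m,t),t)$, shown to be a homotopy equivalence by deformation-retracting both sides onto their $t=0$ slices. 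Part (2) is essentially a re-reading of Definition \ref{manifold-structure} at the basepoint $(N,1_N)\in\mathfrak{S}(N)$: the class $[(N,1_N)]$ contains $(M,f)$ iff there is an h-cobordism $(V;M,N)$ carrying a homotopy equivalence $F:V\approxeq N\times I$ extending $f\sqcup 1_N$.

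For part (3), I would argue by contradiction. Suppose $M$ admits an exotic substitute $N$, i.e., a homeomorphism $h:N\to M$ not homotopic to any diffeomorphism. Since every homeomorphism is a homotopy equivalence, $(N,h)\in\mathfrak{S}(M)$, and the hypothesis $\mathfrak{S}(M)=\{pt\}$ collapses this to the class of $(M,1_M)$. Part (2) then produces an h-cobordism $(V;M,N)$, and Smale's smooth h-cobordism theorem (valid for $n\geq 5$) forces $V\cong M\times I$, hence $M\cong N$, contradicting exoticity.

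For part (4), the key input is the collar neighborhood theorem and its uniqueness up to ambient isotopy. Let $\Sigma=\partial M=f(\partial N)\subset W\equiv M\cup_{f}N$ denote the gluing hypersurface. Since both $\alpha$ and $\beta$ restrict to the prescribed smooth structures on $M$ and on $N$, the only ambiguity lies in the germ of the smooth structure along $\Sigma$. One chooses $\alpha$- and $\beta$-collars of $\Sigma$ that restrict to the same collar of $\partial M$ in $M$; uniqueness of collars then delivers a smooth ambient isotopy supported in a neighborhood of $\Sigma$ on the $N$-side, carrying the $\beta$-structure to the $\alpha$-structure while fixing $M$ pointwise. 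The resulting time-one map is the required diffeomorphism $h:W_\alpha\cong W_\beta$ with $h|_M=1_M$.

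The main obstacle is part (3), because it forces one through the h-cobordism theorem with its intrinsic dimension restriction $n\geq 5$. In dimension $n=4$ only Freedman's topological h-cobordism theorem is available, which would yield a homeomorphism rather than a diffeomorphism, while in dimensions $n\leq 3$ wholly different techniques are needed (Perelman's proof of geometrization in dimension 3). Thus the innocent-looking assertion (3) is in essence a high-dimensional differential topology statement whose validity is tightly controlled by the technology used to produce product structures on h-cobordisms.
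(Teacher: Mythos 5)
The paper itself gives no proof of Theorem \ref{properties-manifold-structure-set}: it is stated, without argument, as a summary of standard surgery-theoretic facts (cf.\ \cite{WALL2, HIRSCH2}), so your proposal can only be measured against the standard arguments, which it essentially follows. Parts (1) and (2) are fine: post-composition, well-definedness via composing the bordism with $f\times 1_I$, and the trivial h-cobordism $M\times I$ with $(m,t)\mapsto(H(m,t),t)$ to show that homotopic structure maps give the same class, is exactly the textbook argument, and (2) is indeed just a re-reading of Definition \ref{manifold-structure} at the basepoint $(N,1_N)$.

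There are, however, two genuine gaps. In part (3) you invoke Smale's h-cobordism theorem \cite{SMALE1} citing only the dimension restriction $n\ge 5$, but that theorem also requires simple connectivity; for general $\pi_1(M)$ an h-cobordism need not be a product, and one must instead use the s-cobordism theorem and argue that the relevant Whitehead torsion vanishes, so the statement as given (with no hypothesis on $\pi_1$) is not established by your argument -- you flag the low-dimensional caveats but not this one. (A minor point: by Definition \ref{definition-exotic-manifolds} an exotic substitute is merely a manifold homeomorphic but not diffeomorphic to $M$; your phrase ``a homeomorphism not homotopic to any diffeomorphism'' is a stronger reading, though your contradiction $M\cong N$ still lands under the correct definition.) In part (4) the appeal to ``uniqueness of collars up to ambient isotopy'' is applied across the two structures $\alpha$ and $\beta$: that uniqueness theorem operates inside a single smooth manifold, so it cannot directly compare an $\alpha$-bicollar of $\Sigma$ with a $\beta$-bicollar, and an ambient isotopy of $W_\beta$ cannot literally ``carry the $\beta$-structure to the $\alpha$-structure.'' The standard repair (the gluing theorem in \cite{HIRSCH2}, or Munkres \cite{MUNKRES}) is to build the diffeomorphism $h:W_\alpha\to W_\beta$ directly: take bicollars of $\Sigma$ in each structure restricting to a common collar on the $M$-side, let $h$ be the identity on $M$ and outside a collar on the $N$-side, and interpolate on the collar using the two parametrizations, damped so as to equal the identity at the outer end. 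With these two repairs your outline coincides with the standard proofs of all four parts.
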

\begin{definition}\label{degree-1-normal-map}
A {\em degree $1$ normal map} from an $n$-dimensional manifold $M$ to an $n$-dimensional geometric Poincar\'e complex $X$ is given by a couple $(f,b)$, where $f:M\to X$ is a mapping such that $f_*[M]=[X]\in H_n(X)$, and $b:\nu_M\to\eta$ is a stable bundle map over $f$, from the stable normal bundle $\nu_M:M\to BO$ to the stable bundle $\eta:X\to BO$. We write also $(f,b):M\to X$.
\end{definition}

\begin{theorem}[Obstructions for manifold structures on geometric Poincar\'e complex]\label{obstructions-manifold-structures}
{\em 1)} Let $X$ be an $n$-dimensional geometric Poincar\'e complex. Then the criterion to decide if $X$ is homotopy equivalent to an $n$-dimensional manifold $M$, is to verify that are satisfied the following two conditions.

{\em (i)} $X$ admits a degreee $1$ normal map $(f,b):M\to X$. This is the case when the map $t(\nu_X):X\to B(G/O)$, given in {\em(\ref{composition-map-stable-normal-bundles})}, is null-homotopic.
\begin{equation}\label{composition-map-stable-normal-bundles}
    \xymatrix{X\ar@/_2pc/[rr]_{t(\nu_X)}\ar[r]&BG\ar[r]&B(G/O)}
\end{equation}
Then there exists a null-homotopy $t(\nu_X)\simeq\{*\}$ iff the Spivak normal fibration $\nu_X:X\to BG=\mathop{\lim}\limits_{\overrightarrow{k}}BG(k)$ admits a vector bundle reduction $\widetilde{\nu_X}:X\to BO$.

{\em (ii)} $(f,b):M\to X$ is bordant to a homotopy equivalence $(g,h):N\approxeq X$.

{\em 2) (J. H. C. Whitehad's theorem).} $f:M\to X$ is a homotopy equivalence iff $\pi_*(f)=0$.  Let $n=2k$, or $n=2k+1$. It is always possible to kill $\pi_i(f)$, for $i\le k$, i.e., there is a bordant degree $1$ normal map $(h,b):N\to X$, with $\pi_i(h)=0$ for $i\le k$.  There exists a normal bordism of $(f,b)$ to a homotopy equivalence iff it is also possible kill $\pi_{k+1}(h)$. In general there exists an obstruction to killing $\pi_{k+1}(h)$, which for $n\ge 5$ is of algebraic nature.

{\em 3) (C. T. C. Wall's surgery obstruction theorem).}\cite{WALL2} For any group $\pi$ there are defined algebraic L-groups $L_{n}(\mathbb{Z}[\pi])$ depending only on $n (\MOD\, 4)$ as group of stable isomorphism classes of $(-1)^k$-quadratic forms over $\mathbb{Z}[\pi]$ for $n=2k$, or as group of stable automorphisms of such forms for $n=2k+1$. An $n$-dimensional degree $1$ normal map $(f,b):N\to X$ has a {\em surgery obstruction} $\sigma_*(f,b)\in L_n(\mathbb{Z}[\pi_1(X)])$, such that $\sigma_*(f,b)=0$ if (and for $n\ge 5$ only if) $(f,b)$ is bordant to a homotopy equivalence.
\end{theorem}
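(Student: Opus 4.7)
The plan is to assemble this as a synthesis of classical surgery-theoretic results; each part has a well-established lineage (Spivak, Browder--Novikov, J.H.C. Whitehead, Wall), so the proof will essentially consist of orchestrating these tools in the order dictated by the statement, rather than proving anything genuinely new. I would proceed part by part and emphasize how the obstructions fit together inside one coherent obstruction sequence.

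For part (1)(i), I would first recall the \emph{Spivak normal fibration} $\nu_X:X\to BG$ associated to the Poincar\'e complex $X$: it is the stable spherical fibration whose Thom space carries the $S$-dual of $X_+$. A degree $1$ normal map $(f,b):M\to X$ exists iff $\nu_X$ admits a reduction to a stable vector bundle $\widetilde{\nu_X}:X\to BO$. Using the fibration $BO\to BG\to B(G/O)$, such a reduction exists iff the composite $t(\nu_X):X\to BG\to B(G/O)$ in diagram (\ref{composition-map-stable-normal-bundles}) is null-homotopic; I would verify equivalence of the two formulations using the long exact homotopy sequence of this fibration together with transversality (pulling back the zero section of $\widetilde{\nu_X}$ on a map from a sphere to the Thom space produces the source manifold $M$, by the Pontrjagin--Thom construction). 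Part (1)(ii) is then the content of the Browder--Novikov program: once one has a normal map, the question of making it a homotopy equivalence is exactly a bordism question inside the normal bordism set.

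For part (2), I would invoke the Whitehead theorem in its standard form: between CW complexes, a map inducing isomorphisms on all homotopy groups is a homotopy equivalence; in the relative case, $\pi_*(f)=0$ is equivalent to $f$ being a homotopy equivalence. Then, following the classical surgery procedure, I would explain how to kill $\pi_i(h)$ by surgery below the middle dimension: each element of $\pi_i(h)$ is represented by an embedded sphere $S^i\hookrightarrow N$ with trivialized normal bundle (thanks to the bundle data $b$), which for $i\le k$ can be surgered away without affecting previously killed homotopy groups (by general position/Whitney). This stops working at $i=k+1$ because the relevant embeddings are in the middle dimension where self-intersections can no longer be removed freely, which is precisely the origin of the algebraic obstruction.

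For part (3), I would quote Wall's theorem \cite{WALL2}: the obstruction to completing the surgery program lives in $L_n(\mathbb{Z}[\pi_1(X)])$, with $L_{2k}$ given by Witt-type groups of nonsingular $(-1)^k$-quadratic forms on finitely generated based free $\mathbb{Z}[\pi_1(X)]$-modules and $L_{2k+1}$ by stable automorphisms of hyperbolic such forms. The invariant $\sigma_*(f,b)$ is constructed from the intersection and self-intersection forms on the kernel $K_k(N)$ of $f_*$ in middle dimension, and its vanishing is necessary for further surgery; for $n\ge 5$, the Whitney trick is available to realize any algebraic cancellation geometrically, so vanishing is also sufficient. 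The main conceptual obstacle I foresee is not originality but organization: making clear that (1)(i) is a \emph{primary} (lifting) obstruction in generalized cohomology, whereas the surgery obstruction in (3) is a \emph{secondary} obstruction defined only after (1)(i) is satisfied and a specific normal map is chosen, and showing that different choices of normal map modify $\sigma_*(f,b)$ by the image of the assembly map from the normal invariants $[X,G/O]$, so that vanishing of $\sigma_*$ on some lift is the correct formulation. With this orchestrated, the three parts together give precisely the Browder--Novikov--Sullivan--Wall criterion stated.
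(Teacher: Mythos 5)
Your outline is correct and takes essentially the same route as the paper, which in fact records this theorem without any proof at all, presenting it as a summary of classical Browder--Novikov--Sullivan--Wall surgery theory with a citation to Wall \cite{WALL2} --- precisely the body of results you orchestrate (Spivak fibration reduction via the fibration $BO\to BG\to B(G/O)$, surgery below the middle dimension, and the $L$-group obstruction with the Whitney trick for $n\ge 5$). The only difference is that you sketch more of the standard arguments than the paper bothers to record, which is harmless.
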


\begin{example}
The simply-connected surgery obstruction groups are given in Tab. \ref{simply-connected-surgery obstruction-groups}.
\begin{table}[h]
\caption{Simply connected surgery obstruction groups}
\label{simply-connected-surgery obstruction-groups}
\begin{tabular}{|c|c|c|c|c|}
  \hline
  \hfil{\rm{\footnotesize $n\: (\MOD\, 4)$}}\hfil& \hfil{\rm{\footnotesize $0$}}\hfil&\hfil{\rm{\footnotesize $1$}}\hfil& \hfil{\rm{\footnotesize $2$}}\hfil&\hfil{\rm{\footnotesize $3$}}\hfil\\
  \hline
 \hfil{\rm{\footnotesize $L_n(\mathbb{Z})$}}\hfil& \hfil{\rm{\footnotesize $\mathbb{Z}$}}\hfil&\hfil{\rm{\footnotesize $0$}}\hfil& \hfil{\rm{\footnotesize $\mathbb{Z}_2$}}\hfil&\hfil{\rm{\footnotesize $0$}}\hfil\\
\hline
\end{tabular}
\end{table}
In particular, we have the following.

$\bullet$\hskip 3pt The surgery obstruction of a $4k$-dimensional normal map $(f,b):M\to X$ with $\pi_1(X)=\{1\}$ is $\sigma_*(f,b)=\frac{1}{8}\sigma(K_{2k}(M),\lambda)\in L_{4k}(\mathbb{Z})=\mathbb{Z}$, with $\lambda$ the nonsingular symmetric form on the middle-dimensional homology kernel $\mathbb{Z}$-module
$$K_{2k}(M)=\ker(f_*:H_{2k}(M)\to H_{2k}(X)).$$

$\bullet$\hskip 3pt The surgery obstruction of a $(4k+2)$-dimensional normal map $(f,b):M\to X$ with $\pi_1(X)=\{1\}$ is $\sigma_*(f,b)=Arf(K_{2k+1}(M;\mathbb{Z}_2),\lambda,\mu)\in L_{4k+2}(\mathbb{Z})=\mathbb{Z}_2$, with $\lambda, \mu$ the nonsingular quadratic form on the middle-dimensional homology $\mathbb{Z}_2$-coefficient homology kernel $\mathbb{Z}_2$-module
$$K_{2k+1}(M;\mathbb{Z}_2)=\ker(f_*:H_{2k+1}(M;\mathbb{Z}_2)\to H_{2k+1}(X;\mathbb{Z}_2)).$$
\end{example}

\begin{theorem}[Browder-Novikov-Sullivan-Wall's surgery exact sequence]\label{surgery-exact-sequence}
One has the following propositions.

{\em (i)} Let $X$ be an $n$-dimensional geometric Poincar\'e complex with $n\ge 5$. The manifold structure set $\mathfrak{S}(X)\not=\varnothing$ iff there exists a normal map $(f,b):M\to X$ with surgery obstruction $\sigma_*(f,b)=0\in L_n(\mathbb{Z}[\pi_1(X)])$.

{\em (ii)} Let $M$ be an $n$-dimensional manifold. Then $\mathfrak{S}(M)$ fits into the surgery exact sequence of pointed sets reported in {\em(\ref{surgery-exact-sequence})}.
\begin{equation}\label{surgery-exact-sequence}
 \xymatrix{\cdots L_{n+1}(\mathbb{Z}[\pi_1(M)])\ar[r]&\mathfrak{S}(M)\ar[r]&[M,G/O]\to L_n(\mathbb{Z}[\pi_1(M)])}.
\end{equation}

\end{theorem}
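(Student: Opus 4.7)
The plan is to prove the two parts in sequence, leaning on Theorem \ref{obstructions-manifold-structures} (obstructions for manifold structures) as the engine, and organising the argument around the three maps in the exact sequence.

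For part (i), I would argue the two implications separately. The forward direction is essentially a tautology: a class $[(M,f)]\in \mathfrak{S}(X)$ provides a homotopy equivalence $f:M\approxeq X$, and pulling back the Spivak normal fibration yields a bundle reduction, hence a stable bundle map $b:\nu_M\to\eta$ over $f$. Since $(f,b)$ is itself a homotopy equivalence, it is trivially bordant to one, so by Wall's obstruction theorem (Theorem \ref{obstructions-manifold-structures}(3)) we have $\sigma_*(f,b)=0\in L_n(\mathbb{Z}[\pi_1(X)])$. The reverse direction uses the condition $n\ge 5$ decisively: given a normal map $(f,b):M\to X$ with $\sigma_*(f,b)=0$, Theorem \ref{obstructions-manifold-structures}(3) provides a normal bordism to a homotopy equivalence $(g,h):N\approxeq X$, so $[(N,g)]\in\mathfrak{S}(X)$ and the structure set is non-empty. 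Note that the hypothesis that $X$ admits at least one normal map (equivalently, that the map $t(\nu_X):X\to B(G/O)$ of \eqref{composition-map-stable-normal-bundles} is null-homotopic) is automatic here because we have already assumed the existence of some $(f,b)$.

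For part (ii), I would define the three maps and check exactness at each term. The map $\eta:\mathfrak{S}(M)\to [M,G/O]$ sends a manifold structure $(N,f)$ to the homotopy class classifying the difference of normal bundle data between $N$ and $M$; it lands in $[M,G/O]$ because two stable vector bundle reductions of the same spherical fibration differ by a $G/O$-class. The map $\theta:[M,G/O]\to L_n(\mathbb{Z}[\pi_1(M)])$ is the surgery obstruction map: a class in $[M,G/O]$ lifts to a normal invariant, which produces a degree $1$ normal map $(f,b):M'\to M$, and one assigns $\sigma_*(f,b)$. The action $L_{n+1}(\mathbb{Z}[\pi_1(M)])\to \mathfrak{S}(M)$ is given by Wall realisation: an element $x\in L_{n+1}$ is realised as a normal map of triads $(F;f,1):(V;M',M)\to M\times(I;\{0\},\{1\})$ with $\sigma_*(F)=x$, and the target of the new boundary component produces a manifold structure $(M',f)$.

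The exactness proofs are then routine translations of the geometric meaning of these maps. Exactness at $[M,G/O]$ follows from part (i) applied to degree $1$ normal maps: a normal invariant comes from an honest manifold structure exactly when its surgery obstruction is zero. Exactness at $\mathfrak{S}(M)$ amounts to saying that two manifold structures have the same normal invariant iff they differ by the action of some $x\in L_{n+1}$, which is exactly Wall's realisation theorem combined with the uniqueness clause of the s-cobordism theorem — here again the hypothesis $n\ge 5$ enters, and the identification of the ambiguity of normal bordisms is done by assembling a normal bordism on $M\times I$ whose boundary obstruction measures the difference.

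The main obstacle, as in the original Browder--Novikov--Sullivan--Wall programme, will be proving the realisation theorem for $L_{n+1}(\mathbb{Z}[\pi_1(M)])$: namely, that every element of the algebraic $L$-group is actually the surgery obstruction of a normal map of triads with prescribed boundary $M$. This requires constructing the realisation by attaching handles in middle dimension to $M\times I$ in a way that realises a given $(-1)^{k}$-quadratic form (for $n+1=2k$) or automorphism (for $n+1=2k+1$), and checking that the resulting action on $\mathfrak{S}(M)$ is well-defined modulo $h$-cobordism. The handle calculus needed is exactly the one used in Theorem \ref{exotic-differentiable-structures-on-compact-smooth-manifolds} and Lemma \ref{morse-smale-functions}, so I would invoke those earlier results rather than rebuild the handle theory from scratch, and cite \cite{WALL2} for the detailed verification of exactness at the $L_{n+1}$ end.
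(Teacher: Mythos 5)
The paper itself offers no proof of this theorem: it is quoted as part of an expository summary of classical surgery theory, with Wall's book \cite{WALL2} as the implicit source, and the text moves directly on to the next section. So there is nothing in the paper to compare against line by line; your sketch should be judged as a reconstruction of the classical Browder--Novikov--Sullivan--Wall argument, and as such it is correct in outline and follows the standard route. Part (i) is handled exactly as one should: the forward direction is formal (a homotopy equivalence is trivially normally bordant to one, so its obstruction vanishes), and the converse is precisely Wall's $\pi$-$\pi$/obstruction theorem, which is where $n\ge 5$ enters. For part (ii) you correctly identify the three maps (normal invariant, surgery obstruction, Wall realisation) and correctly locate the hard content in the realisation theorem, which you delegate to \cite{WALL2}; since the paper delegates the entire theorem, your proposal is, if anything, more detailed than the source.

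Three small cautions. First, the equivalence relation on $\mathfrak{S}(X)$ as defined in Definition \ref{manifold-structure} is by h-cobordism with $F$ a homotopy equivalence, so when you argue exactness at $\mathfrak{S}(M)$ you should phrase the ambiguity in those terms; invoking ``the uniqueness clause of the s-cobordism theorem'' mixes the h- and s-decorations (correspondingly $L^h$ versus $L^s$), a distinction the paper suppresses but which matters if one is careful about Whitehead torsion. Second, part (ii) also requires $n\ge 5$ (the paper's statement omits it, but your argument uses realisation and the (s/h)-cobordism theorem, both of which need it), so state the hypothesis. Third, your claim that the handle calculus needed for realisation is ``exactly'' that of Theorem \ref{exotic-differentiable-structures-on-compact-smooth-manifolds} and Lemma \ref{morse-smale-functions} overreaches: those results give density/approximation statements and Morse--Smale orderings, not the embedded middle-dimensional surgeries (Whitney trick in dimension $\ge 5$) on $M\times I$ that Wall realisation requires, so the citation of \cite{WALL2} is doing genuine work there and should not be presented as a corollary of the paper's earlier lemmas.
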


\section{\bf h-COBORDISM IN RICCI FLOW PDE's}

In this section we shall relate the h-cobordism with the geometric properties of the Ricci flow equation considered in the previous two sections. With this respect let us recall first some definitions and properties about surgery on manifolds.

\begin{table}[t]
\caption{Calculated groups $\Theta_n$ for $1\le n\le 20$ and some related groups.}
\label{calculated-h-cobordism-groups-homotopy-sphere}
\scalebox{0.7}{$\begin{tabular}{|c|c|c|c|c|c|c|c|c|c|c|c|c|c|c|c|c|c|c|c|c|}
  \hline
  \hfil{\rm{\footnotesize $n$}}\hfil& \hfil{\rm{\footnotesize $1$}}\hfil & \hfil{\rm{\footnotesize $2$}}\hfil& \hfil{\rm{\footnotesize $3$}}\hfil & \hfil{\rm{\footnotesize $4$}}\hfil & \hfil{\rm{\footnotesize $5$}}\hfil & \hfil{\rm{\footnotesize $6$}}\hfil & \hfil{\rm{\footnotesize $7$}}\hfil& \hfil{\rm{\footnotesize $8$}}\hfil & \hfil{\rm{\footnotesize $9$}}\hfil & \hfil{\rm{\footnotesize $10$}}\hfil& \hfil{\rm{\footnotesize $11$}}\hfil &\hfil{\rm{\footnotesize $12$}}\hfil & \hfil{\rm{\footnotesize $13$}}\hfil & \hfil{\rm{\footnotesize $14$}}\hfil & \hfil{\rm{\footnotesize $15$}}\hfil&\hfil{\rm{\footnotesize $16$}}\hfil & \hfil{\rm{\footnotesize $17$}}\hfil&  \hfil{\rm{\footnotesize $18$}}& \hfil{\rm{\footnotesize $19$}}\hfil& \hfil{\rm{\footnotesize $20$}}\hfil  \\
  \hline
  \hfil{\rm{\footnotesize $\Theta_n$}}\hfil& \hfil{\rm{\footnotesize $0$}}\hfil & \hfil{\rm{\footnotesize $0$}}\hfil& \hfil{\rm{\footnotesize $0$}}\hfil & \hfil{\rm{\footnotesize $0$}}\hfil & \hfil{\rm{\footnotesize $0$}}\hfil & \hfil{\rm{\footnotesize $0$}}\hfil & \hfil{\rm{\footnotesize $\mathbb{Z}_{28}$}}\hfil& \hfil{\rm{\footnotesize $\mathbb{Z}_{2}$}}\hfil & \hfil{\rm{\footnotesize $\mathbb{Z}_{8}$}}\hfil & \hfil{\rm{\footnotesize $\mathbb{Z}_{6}$}}\hfil& \hfil{\rm{\footnotesize $\mathbb{Z}_{992}$}}\hfil &\hfil{\rm{\footnotesize $0$}}\hfil & \hfil{\rm{\footnotesize $\mathbb{Z}_{3}$}}\hfil & \hfil{\rm{\footnotesize $\mathbb{Z}_{2}$}}\hfil & \hfil{\rm{\footnotesize $\mathbb{Z}_{16256}$}}\hfil&\hfil{\rm{\footnotesize $\mathbb{Z}_{2}$}}\hfil & \hfil{\rm{\footnotesize $\mathbb{Z}_{16}$}}\hfil& \hfil{\rm{\footnotesize $\mathbb{Z}_{16}$}}\hfil& \hfil{\rm{\footnotesize $\mathbb{Z}_{523264}$}}\hfil& \hfil{\rm{\footnotesize $\mathbb{Z}_{24}$}}\hfil \\
  \hline
\hfil{\rm{\footnotesize $bP_{n+1}$}}\hfil& \hfil{\rm{\footnotesize $0$}}\hfil & \hfil{\rm{\footnotesize $0$}}\hfil& \hfil{\rm{\footnotesize $0$}}\hfil & \hfil{\rm{\footnotesize $0$}}\hfil & \hfil{\rm{\footnotesize $0$}}\hfil & \hfil{\rm{\footnotesize $0$}}\hfil & \hfil{\rm{\footnotesize $\mathbb{Z}_{28}$}}\hfil& \hfil{\rm{\footnotesize $0$}}\hfil & \hfil{\rm{\footnotesize $\mathbb{Z}_{2}$}}\hfil & \hfil{\rm{\footnotesize $0$}}\hfil& \hfil{\rm{\footnotesize $\mathbb{Z}_{992}$}}\hfil &\hfil{\rm{\footnotesize $0$}}\hfil & \hfil{\rm{\footnotesize $0$}}\hfil & \hfil{\rm{\footnotesize $0$}}\hfil & \hfil{\rm{\footnotesize $\mathbb{Z}_{8128}$}}\hfil&\hfil{\rm{\footnotesize $0$}}\hfil & \hfil{\rm{\footnotesize $\mathbb{Z}_{2}$}}\hfil& \hfil{\rm{\footnotesize $0$}}\hfil& \hfil{\rm{\footnotesize $\mathbb{Z}_{261632}$}}\hfil& \hfil{\rm{\footnotesize $0$}}\hfil \\
  \hline
\hfil{\rm{\footnotesize $\Theta_n/bP_{n+1}$}}\hfil& \hfil{\rm{\footnotesize $0$}}\hfil & \hfil{\rm{\footnotesize $0$}}\hfil& \hfil{\rm{\footnotesize $0$}}\hfil & \hfil{\rm{\footnotesize $0$}}\hfil & \hfil{\rm{\footnotesize $0$}}\hfil & \hfil{\rm{\footnotesize $0$}}\hfil & \hfil{\rm{\footnotesize $0$}}\hfil& \hfil{\rm{\footnotesize $\mathbb{Z}_2$}}\hfil & \hfil{\rm{\footnotesize $\mathbb{Z}_{4}$}}\hfil & \hfil{\rm{\footnotesize $\mathbb{Z}_6$}}\hfil& \hfil{\rm{\footnotesize $0$}}\hfil &\hfil{\rm{\footnotesize $0$}}\hfil & \hfil{\rm{\footnotesize $\mathbb{Z}_3$}}\hfil & \hfil{\rm{\footnotesize $\mathbb{Z}_2$}}\hfil & \hfil{\rm{\footnotesize $\mathbb{Z}_{2}$}}\hfil&\hfil{\rm{\footnotesize $\mathbb{Z}_2$}}\hfil & \hfil{\rm{\footnotesize $\mathbb{Z}_{8}$}}\hfil& \hfil{\rm{\footnotesize $\mathbb{Z}_{16}$}}\hfil& \hfil{\rm{\footnotesize $\mathbb{Z}_{2}$}}\hfil& \hfil{\rm{\footnotesize $\mathbb{Z}_{24}$}}\hfil \\
  \hline
\hfil{\rm{\footnotesize $\pi^s_{n}/J$}}\hfil& \hfil{\rm{\footnotesize $0$}}\hfil & \hfil{\rm{\footnotesize $\mathbb{Z}_{2}$}}\hfil& \hfil{\rm{\footnotesize $0$}}\hfil & \hfil{\rm{\footnotesize $0$}}\hfil & \hfil{\rm{\footnotesize $0$}}\hfil & \hfil{\rm{\footnotesize $\mathbb{Z}_{2}$}}\hfil & \hfil{\rm{\footnotesize $0$}}\hfil& \hfil{\rm{\footnotesize $\mathbb{Z}_{2}$}}\hfil & \hfil{\rm{\footnotesize $\mathbb{Z}_{4}$}}\hfil & \hfil{\rm{\footnotesize $\mathbb{Z}_{6}$}}\hfil& \hfil{\rm{\footnotesize $0$}}\hfil &\hfil{\rm{\footnotesize $0$}}\hfil & \hfil{\rm{\footnotesize $\mathbb{Z}_{3}$}}\hfil & \hfil{\rm{\footnotesize $\mathbb{Z}_{4}$}}\hfil & \hfil{\rm{\footnotesize $\mathbb{Z}_{2}$}}\hfil&\hfil{\rm{\footnotesize $\mathbb{Z}_2$}}\hfil & \hfil{\rm{\footnotesize $\mathbb{Z}_{8}$}}\hfil& \hfil{\rm{\footnotesize $\mathbb{Z}_{16}$}}\hfil& \hfil{\rm{\footnotesize $\mathbb{Z}_{2}$}}\hfil& \hfil{\rm{\footnotesize $\mathbb{Z}_{24}$}}\hfil \\
  \hline
\hfil{\rm{\footnotesize $\pi_n^s$}}\hfil& \hfil{\rm{\footnotesize $\mathbb{Z}_2$}}\hfil&\hfil{\rm{\footnotesize $\mathbb{Z}_2$}}\hfil& \hfil{\rm{\footnotesize $\mathbb{Z}_{24}$}}\hfil& \hfil{\rm{\footnotesize $0$}}\hfil& \hfil{\rm{\footnotesize $0$}}\hfil&\hfil{\rm{\footnotesize $\mathbb{Z}_2$}}\hfil& \hfil{\rm{\footnotesize $\mathbb{Z}_{240}$}}\hfil&\hfil{\rm{\footnotesize $\mathbb{Z}_{4}$}}\hfil&\hfil{\rm{\footnotesize $\mathbb{Z}_{8}$}}\hfil&\hfil{\rm{\footnotesize $\mathbb{Z}_{6}$}}\hfil&\hfil{\rm{\footnotesize $\mathbb{Z}_{504}$}}\hfil&\hfil{\rm{\footnotesize $0$}}\hfil&\hfil{\rm{\footnotesize $\mathbb{Z}_{3}$}}\hfil&\hfil{\rm{\footnotesize $\mathbb{Z}_{4}$}}\hfil&\hfil{\rm{\footnotesize $\mathbb{Z}_{960}$}}\hfil&\hfil{\rm{\footnotesize $\mathbb{Z}_{4}$}}\hfil&\hfil{\rm{\footnotesize $\mathbb{Z}_{16}$}}\hfil&\hfil{\rm{\footnotesize $-$}}\hfil&\hfil{\rm{\footnotesize $-$}}\hfil&\hfil{\rm{\footnotesize $-$}}\hfil\\
\hline
  \hfil{\rm{\footnotesize $J$}}\hfil& \hfil{\rm{\footnotesize $\mathbb{Z}_2$}}\hfil&\hfil{\rm{\footnotesize $0$}}\hfil& \hfil{\rm{\footnotesize $\mathbb{Z}_{24}$}}\hfil& \hfil{\rm{\footnotesize $0$}}\hfil& \hfil{\rm{\footnotesize $0$}}\hfil&\hfil{\rm{\footnotesize $0$}}\hfil& \hfil{\rm{\footnotesize $\mathbb{Z}_{240}$}}\hfil&\hfil{\rm{\footnotesize $\mathbb{Z}_{2}$}}\hfil&\hfil{\rm{\footnotesize $\mathbb{Z}_{2}$}}\hfil&\hfil{\rm{\footnotesize $0$}}\hfil&\hfil{\rm{\footnotesize $\mathbb{Z}_{504}$}}\hfil&\hfil{\rm{\footnotesize $0$}}\hfil&\hfil{\rm{\footnotesize $0$}}\hfil&\hfil{\rm{\footnotesize $0$}}\hfil&\hfil{\rm{\footnotesize $\mathbb{Z}_{480}$}}\hfil&\hfil{\rm{\footnotesize $\mathbb{Z}_{2}$}}\hfil&\hfil{\rm{\footnotesize $\mathbb{Z}_{2}$}}\hfil&\hfil{\rm{\footnotesize $-$}}\hfil&\hfil{\rm{\footnotesize $-$}}\hfil&\hfil{\rm{\footnotesize $-$}}\hfil\\
\hline
\multicolumn{21}{l}{\rm{\footnotesize $\dim_{\mathbb{Z}}\Theta_n=$ number of differential structures on the $n$-dimensional homotopy sphere.}}\\
   \multicolumn{21}{l}{\rm{\footnotesize $bP_{n+1}\triangleleft \Theta_n$: subgroup of $n$-dimensional homotopy spheres bounding parallelizable manifolds.}}\\
  \multicolumn{21}{l}{\rm{\footnotesize $bP_{n+1}$ is a finite cyclic group that vanishes if $n$ is even.}}\\
   \multicolumn{21}{l}{\rm{\footnotesize Kervaire-Milnor formula: $\dim_{\mathbb{Z}}bP_{4n}=\frac{3-(-1)^n}{2}2^{2n-2}(2^{2n-1}-1){\rm Numerator}(\frac{B_{4n}}{4n})$, $n\ge 2$, with $B_{4n}$ Bernoulli numbers.}}\\
   \multicolumn{21}{l}{\rm{\footnotesize  $\dim_{\mathbb{Z}}bP_{4n+2}=0$, $n=0,1,3,7,15$;  $\dim_{\mathbb{Z}}bP_{4n+2}=0$, or $\mathbb{Z}_2$, $n=31$; $\dim_{\mathbb{Z}}bP_{4n+2}=\mathbb{Z}_2$ otherwise.}}\\
  \multicolumn{21}{l}{\rm{\footnotesize $\pi^s_n\equiv\mathop{\lim}\limits_{\overrightarrow{k}}\pi_{n+k}(S^k)$: stable homotopy groups or $n$-stems. (Serre's theorem. The groups $\pi^s_n$ are finite.)}}\\
\multicolumn{21}{l}{\rm{\footnotesize There is an injective map $\Theta_n/bP_{n+1}\to\pi^s_n/J$ where $J$ is the image of Whitehead's $J$-homomorphisms $\pi_n(SO)\to\pi_n^s$.}}\\
\multicolumn{21}{l}{\rm{\footnotesize For $n=0$ one has $\pi_n^s=\mathbb{Z}$ and $J=0$.}}\\
\end{tabular}$}
\end{table}

\begin{definition}
{\em 1)} The {\em$n$-dimensional handle}, of {\em index
$p$}, is $h^{p}\equiv D^{p}\times
D^{n-p}$. Its {\em core} is $D^{p}\times\{0\}$. The
{\em boundary of the core} is $S^{p-1}\times\{0\}$. Its
{\em cocore} is $\{0\}\times  D^{n-p}$ and its {\em
transverse sphere} is $\{0\}\times
S^{n-p-1}$.

{\em 2)} Given a topological space $Y$, the images of continuous maps $D^{n}\to Y$ are called the {\em$n$-cells} of
$Y$.

{\em 3)} Given a topological space $X$ and a continuous map
$\alpha:S^{n-1}\to X$, we call $Y\equiv X\bigcup_\alpha
D^{n}$ obtained from $X$ by {\em attaching a $n$-dimensional cell} to $X$.

{\em 4)} We call {\em CW-complex} a topological space $X$ obtained from $\varnothing$ by successively attaching cells of non-decreasing dimension:
\begin{equation}
X\equiv(\cup D^{0})\cup D^{1}\cup D^{2})\cup\cdots
\end{equation}
We call $X^{n}\equiv\bigcup_{1\le i\le n}D^{i}$, $n\ge 0$, the {\em$(n)$-skeleta}.
\end{definition}
\begin{definition}{\em(Homotopy groups.)}
{\em 1)} We define {\em homotopy groups} of manifold, (resp. CW-complex), $M$, the groups
\begin{equation}
\pi_{p}(M)=[S^{p},M],\: p\ge 0.
\end{equation}

{\em 2)} Let $X$ be a manifold over a
CW-complex and an element $x\in\pi_{n}(X)$, $n\ge 1$. Let
$Y=X\bigcup_{\phi^{n}}D^{n+1}$ be the
CW-complex obtained from $X$ by attaching an $(n+1)$-cell
with map $\phi^{n}: S^{n}\to X$, with
$x=[\phi^{n}]\in\pi_{n}(X)$. The operation of attaching
the $(n+1)$-cell is said to {\em kill} $x$.
\end{definition}

\begin{theorem}{\em(CW-substitute)}\label{ma-CW}
{\em 1)} For any manifold, $M$, we can construct a CW-complex $X$ and a weak
homotopy equivalence $f:X\to M$, (i.e., the induced maps
$f_*:\pi_r(X')\to\pi_r(X)$ on the Hurewicz homotopy groups are
bijective for $r\ge 0$).\footnote{Note that an homotopy
equivalence is an weak homotopy equivalence, but the vice versa is
not true. Recall that two pointed topological spaces $(X,x_0)$ and
$(Y,y_0)$ have the same homotopy type if
$\pi_1(X,x_0)\cong\pi_1(Y,y_0)$, and $\pi_n(X,x_0)$ and $
\pi_n(Y,y_0)$ are isomorphic as modules over ${\mathbb
Z}[\pi_1(X,x_0)]$ for $n\ge 2$. A {\em simply homotopy equivalence} between $m$-dimensional manifolds, (or finite CW complexes), is a homotopy equivalence $f:M\approxeq N$ such that the Whitehead torsion $\tau(f)\in Wh(\pi_1(M))$, where $Wh(\pi_1(M))$ is the Whitehead group of $\pi_1(M)$. With this respect, let us recall that if $A$ is an associative ring with unity, such that $A^m$ is isomorphic to $A^n$ iff $m=n$, put $GL(A)\equiv\bigcup_{n-1}GL_n(A)$, the {\em infinite general linear group of $A$} and $E(A)\equiv[GL(A),GL(A)]\triangleleft GL(A)$. $E(A)$ is the normal subgroup generated by the elementary matrices $\scalebox{0.6}{$\left(
                                      \begin{array}{cc}
                                        1 & a \\
                                        0 & 1 \\
                                      \end{array}\right)$}$. The {\em torsion group} $K_1(A)$ is the abelian group $K_1(A)=GL(A)/E(A)$.  Let $A^\bullet$ denote the multiplicative group of units in the ring $A$. For a commutative ring $A$, the inclusion $A^\bullet\hookrightarrow K_1(A)$ splits by the determinant map $\det:K_1(A)\to A^\bullet$, $\tau(\phi)\mapsto \det(\phi)$ and one has the splitting $K_1(A)=A^\bullet\bigoplus SK_1(A)$, where $SK_1(A)=\ker(\det:K_1(A)\to A^\bullet)$. If $A$ is a field, then $K_1(A)\cong A^\bullet$ and $SK_1(A)=0$. The {\em torsion} $\tau(f)$ of an isomorphism $f:L\cong K$ of finite generated free $A$-modules of rank $n$, is the torsion of the corresponding invertible matrix $(f^i_j)\in GL_n(A)$, i.e., $\tau(f)=\tau(f^j_i)\in K_1(A)$. The isomorphism is {\em simple} if $\tau(f)=0\in K_1(A)$. The Whitehead group of a group $G$ is the abelian group $Wh(G)\equiv K_1(\mathbb{Z}[G])/\{\tau(\mp g) | g\in G\}$. $Wh(G)=0$ in the following cases: (a) $G=\{1\}$; (b) $G=\pi_1(M)$, with $M$ a surface; (c) $G=\mathbb{Z}^m$, $m\ge 1$. There is a conjecture, (Novikov) that extends the case (b) also to $m$-dimensional compact manifolds $M$ with universal cover $\widetilde{M}=\mathbb{R}^m$. This conjecture has been verified in many cases \cite{FERRY-RANICKI-ROSENBERG}. } Then $X'$ is called the
CW-substitute of $X$. This is unique up to homotopy.

{\em 2)} Furthermore if $h:X\to Y$ is a continuous map between
manifolds, and $(X',f)$, $(Y',g)$ are the
corresponding CW-substitutes, then we can find a cellular map
$hì:X'\to Y'$ so that the following diagram is commutative:

\begin{equation}
\xymatrix@C=60pt{ X'\ar[d]_{h'} \ar[r]^{f}& X\ar[d]^{h}\\
Y'\ar[r]_{g} & Y\\}
\end{equation}
$h'$ is unique up to homotopy.
\end{theorem}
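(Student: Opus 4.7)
The plan is to prove part (1) by the classical skeleton-by-skeleton construction of a CW-approximation, attaching cells to kill unwanted elements of $\pi_n$ and wedging in spheres to realise missing ones; and to deduce part (2) from the universal lifting property of weak equivalences against CW-complexes, combined with the cellular approximation theorem.

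For part (1), I would start with $X^{(0)}$ a discrete set containing one point for every path-component of $M$, and let $f_0:X^{(0)}\to M$ pick a basepoint in each component. Inductively, assume a CW-complex $X^{(n)}$ and a map $f_n:X^{(n)}\to M$ have been built so that $(f_n)_*$ is bijective on $\pi_i$ for $i<n$ and surjective on $\pi_n$. To pass to $X^{(n+1)}$, first attach an $(n+1)$-cell along each generator $\phi_\alpha:S^n\to X^{(n)}$ of $\ker\bigl((f_n)_*:\pi_n(X^{(n)})\to\pi_n(M)\bigr)$, extending $f_n$ over the new cell by a null-homotopy of $f_n\circ\phi_\alpha$ in $M$. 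Then wedge in a copy of $S^{n+1}$ for each generator of $\pi_{n+1}(M)$ not yet hit by the image, mapping it to $M$ by a representative. By cellular approximation, attaching cells of dimension $\ge n+1$ cannot perturb $\pi_i$ for $i<n$, so $(f_{n+1})_*$ is bijective on $\pi_i$ for $i\le n$ and surjective on $\pi_{n+1}$. Set $X=\bigcup_n X^{(n)}$ and $f=\mathop{\lim}\limits_{\overrightarrow{n}}f_n$. Because homotopy groups of CW-complexes commute with such telescopes, $f_*:\pi_r(X)\to\pi_r(M)$ is an isomorphism for every $r\ge 0$.

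For part (2), given $h:X\to Y$ and CW-substitutes $(X',f)$, $(Y',g)$, the task is to lift $h\circ f:X'\to Y$ through $g$ up to homotopy. Since $g:Y'\to Y$ is a weak equivalence and $X'$ is a CW-complex, the induced map $g_*:[X',Y']\to[X',Y]$ on homotopy classes is a bijection: this is the standard Whitehead-type statement, proved inductively over the skeleta of $X'$ by obstruction theory against the (vanishing) relative homotopy groups of the mapping-cylinder pair $(M(g),Y')$. Hence there is a unique-up-to-homotopy map $h_0:X'\to Y'$ with $g\circ h_0\simeq h\circ f$. Applying the cellular approximation theorem yields a cellular representative $h'\simeq h_0$ of the same homotopy class, which still satisfies $g\circ h'\simeq h\circ f$; any two such cellular lifts $h'_1,h'_2$ are cellularly homotopic by applying the same bijection $[X'\times I,Y']\to[X'\times I,Y]$ to the CW cylinder relative to its boundary.

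The principal obstacle is the bookkeeping in the inductive step of part (1): one must verify carefully that each round of attaching $(n+1)$-cells, followed by wedging $(n+1)$-spheres, preserves the isomorphisms on $\pi_i$ already arranged for $i<n$, while simultaneously killing exactly the kernel in degree $n$ and creating precisely the missing generators in degree $n+1$. Once this inductive control is in place, part (2) is a routine consequence of the lifting bijection $g_*:[X',Y']\cong[X',Y]$ and cellular approximation.
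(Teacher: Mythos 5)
The paper does not actually prove this theorem: it is quoted as a classical background result (Whitehead's CW-approximation theorem), and the surrounding text simply uses it, so there is no ``paper proof'' to compare against. Your proposal supplies the standard textbook argument, and it is essentially correct: the skeleton-by-skeleton construction (attach $(n+1)$-cells along normal generators of $\ker\bigl((f_n)_*:\pi_n(X^{(n)})\to\pi_n(M)\bigr)$, wedge on $(n+1)$-spheres to achieve surjectivity in degree $n+1$, pass to the colimit and use compactness of spheres to identify $\pi_r$ of the telescope) is exactly how part (1) is established in the literature, and part (2) follows, as you say, from the bijection $g_*:[X',Y']\to[X',Y]$ induced by the weak equivalence $g$ together with cellular approximation; uniqueness up to homotopy is the injectivity half of that bijection. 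Two small points deserve care. First, in degree $1$ the ``kernel'' is a normal subgroup of a possibly nonabelian group, so the $2$-cells must be attached along a set of normal generators, and for disconnected $M$ the whole induction is run componentwise with chosen basepoints; this is part of the bookkeeping you flag but should be said explicitly. Second, your argument for part (2) produces a square that commutes only up to homotopy, $g\circ h'\simeq h\circ f$, whereas the statement asserts a commutative diagram; in the standard formulation (and evidently in the intended reading here, since $h'$ is only claimed unique up to homotopy) homotopy-commutativity is what is meant and is all one can expect without replacing $g$ by a fibration via its mapping cylinder, so you should either state the conclusion in that form or add the mapping-cylinder modification. With those caveats your route is sound and fills in precisely what the paper leaves implicit.
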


\begin{definition}
An {\em$n$-dimensional bordism} $(W;M_0,f_0;M_1,f_1)$ consists
of a compact manifold $W$ of dimension $n$, and
closed $(n-1)$-dimensional manifolds $M_0$,
$M_1$, such that $\partial W=N_0\sqcup N_1$, and
diffeomorphisms $f_i:M_i\cong N_i$, $i=0,1$. An
{\em$n$-dimensional h-bordism} (resp. {\em$s$-bordism}) is a
$n$-dimensional bordism as above, such that the inclusions
$N_i\hookrightarrow W$, $i=0,1$, are homotopy equivalences (resp.
simply homotopy equivalences).\footnote{Let us emphasize that to state that the inclusions $M_i\hookrightarrow W$, $i=0,1$, are homotopy equivalences is equivalent to state the $M_i$ are deformation retracts of $W$.} $W$ is a {\em trivial h-bordism} if $W\cong M_0\times[0,1]$. In such a case $M_0$ is diffeomorphic to $M_1$: $M_0\cong M_1$

We will simply denote also by $(W;M_0,M_1)$ a $n$-dimensional
bordism.

If $\phi^{p}:S^{p+1}\times D^{n-p-1}\to M_1$ is
an embedding, then

\begin{equation}
W+(\phi^{p})\equiv W\bigcup_{\phi^{p}}D^{p}\times
D^{n-p}\equiv W\bigcup h^{p} \end{equation}

is said obtained from $W$ by {\em attaching a handle},
$h^{p}\equiv D^{p}\times D^{n-p}$, of {\em
index $p$} by $\phi^{p}$.\footnote{In general $W\bigcup
h^{p}$ is not a manifold but a CW-complex.} Put $\partial(W+\phi^{p})_0=M_0$,
$\partial(W+\phi^{p})_1=\partial(W+\phi^{p})-M_0$.
\end{definition}

\begin{theorem}[CW-substitute of manifold and Hurewicz morphisms]\label{ma2}
For any manifold $M$ we can construct a
CW-complex $M'$ and a weak homotopy equivalence $f:
M'\to M$. Then $M'$ is called the {\em
CW-substitute} of $M$. $M'$ is unique up to homotopy. Then the
homotopy groups of $M$ and  $M'$ are isomorphic, i.e., one has the top horizontal exact short sequence reported in the commutative diagram {\em(\ref{comm-diag1})}. There the vertical lines represent the Hurewicz morphisms relating homotopy groups and homology groups.
\begin{equation}\label{comm-diag1}
\xymatrix{0\ar[r]&\pi_p(M)\ar[d]_{a}\ar[r]&\pi_p( M'))\ar[d]_{a'}\ar[r]&0\\
0\ar[r]&H_p(M)\ar[r]&H_p( M')\ar[r]&0}
\end{equation}
If $M$ is $(n-1)$-connected, $n\ge 2$, then the morphisms $a$, $a'$,
become isomorphisms for $p\le n$ and epimorphisms for
$p=n+1$.

We call the morphisms $a$ and $a'$ the {\em Hurewicz morphisms} of
the manifold $M$ and $M'$ respectively.
\end{theorem}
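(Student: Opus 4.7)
The plan is to split the statement into three pieces: existence and uniqueness of the CW substitute (which is essentially a restatement of Theorem \ref{ma-CW}), the isomorphism of homotopy groups, and the Hurewicz-theorem content.

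For existence I would apply the standard Whitehead cellular-approximation construction inductively. Start with $M'_0$ a disjoint union of one point per path component of $M$ together with an obvious map $f_0:M'_0\to M$. Suppose $f_k:M'_k\to M$ has been built with $(f_k)_*$ an isomorphism on $\pi_i$ for $i<k$ and an epimorphism for $i=k$. Attach to $M'_k$ one $(k+1)$-cell along each element of a generating set of $\ker((f_k)_*:\pi_k(M'_k)\to\pi_k(M))$ via a null-homotopy in $M$, and wedge on copies of $S^{k+1}$ mapping to a generating set of $\pi_{k+1}(M)$; extending $f_k$ across these new cells gives $f_{k+1}:M'_{k+1}\to M$ with the analogous property in dimension $k+1$. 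The colimit $M'\equiv\bigcup_k M'_k$ with $f\equiv\varinjlim f_k$ yields the desired weak homotopy equivalence. Uniqueness up to homotopy follows from Whitehead's theorem: given two substitutes $f_i:M'_i\to M$, the cofibrancy of $M'_1$ together with the weak equivalence $f_2$ produce a lift $h:M'_1\to M'_2$ with $f_2\circ h\simeq f_1$, and $h$ is then a weak equivalence between CW complexes, hence an honest homotopy equivalence.

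Isomorphism $\pi_p(M)\cong\pi_p(M')$ in every $p$ is by the very definition of weak homotopy equivalence, which is what the (trivially short-exact) top row of (\ref{comm-diag1}) records. The vertical Hurewicz morphisms $a$ and $a'$ are natural in maps of spaces, and commutativity of the square is this naturality applied to $f$. If $M$ is $(n-1)$-connected, then $M'$ is $(n-1)$-connected as well, so the classical Hurewicz theorem applies to both: the Hurewicz map from $\pi_p$ to $H_p$ is an isomorphism for $p\le n$ and an epimorphism for $p=n+1$, which is precisely the claim about $a$ and $a'$.

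The main technical point, and the only one not immediate from the cited or classical results, is verifying that the cell-attaching step preserves the induction hypothesis on lower homotopy: attaching $(k+1)$-cells leaves $\pi_i$ for $i<k$ untouched by cellular approximation, while the two families of $(k+1)$-cells respectively kill the kernel on $\pi_k$ and produce surjectivity on $\pi_{k+1}$. Everything else reduces to Whitehead's theorem, naturality of the Hurewicz transformation, and the Hurewicz theorem itself, so I expect the proof to be essentially a verification rather than a genuinely new argument.
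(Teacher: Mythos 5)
Your proposal is correct: the standard CW-approximation construction for existence, Whitehead's theorem for uniqueness, naturality of the Hurewicz transformation for the commuting square, and the classical Hurewicz theorem (applied to $M$ and to the $(n-1)$-connected $M'$) for the final claim is exactly the argument this statement requires. The paper itself supplies no proof of Theorem \ref{ma2} --- it is presented as a standard consequence of Theorem \ref{ma-CW} together with classical Hurewicz theory --- so your write-up simply makes explicit the routine verification the paper leaves implicit, and it does so without any gaps.
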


\begin{definition}
A {\em $p$-surgery} on a manifold $M$
of dimension $n$ is the
procedure of construction a new $n$-dimensional manifold:\footnote{We say also that a
$p$-surgery removes a {\em framed
$p$-embedding} $g:S^{p}\times
D^{n-p}\hookrightarrow M$. Then it kills the
homotopy class $[g]\in\pi_{p}(M)$ of the core $g=g|:
S^{p}\times\{0\}\hookrightarrow M$.}
\begin{equation}\label{p-surgery-definition}
N\equiv\overline{(M\setminus S^{p}\times
D^{n-p})}\bigcup_{S^{p}\times S^{n-p-1}}
D^{p+1}\times S^{n-p-1}.
\end{equation}
\end{definition}

\begin{example}
Since for the $n$-dimensional sphere $S^{n}$
we can write
\begin{equation}
\begin{array}{ll}
S^{n}&=\partial D^{n+1}=\partial(D^{p+1}\times D^{n-p})\\
&=S^{p}\times D^{n-p}\bigcup D^{p+1}\times S^{n-p-1}\\
\end{array}
\end{equation}
it follows that the surgery removing $S^{p}\times D^{n-p}\subset S^{n}$ converts $
S^{n}$ into the product of two spheres
\begin{equation}
D^{p+1}\times S^{n-p-1}\bigcup_{S^{p}\times S^{n-p-1}}
D^{p+1}\times S^{n-p-1}= S^{p+1}\times S^{n-p-1}.
\end{equation}
\end{example}

\begin{theorem}[Surgery and Euler characteristic]
{\em 1)} Let $M$ be a $2n$-dimensional smooth manifold and let apply to $N$ obtained by $M$ with a $p$-surgery as defined in {\em(\ref{p-surgery-definition})}. Then the Euler characteristic of $N$ is related to the $M$ one, by the relation reported in {\em(\ref{p-surgery-relation-with-euler-characteristic})}.
\begin{equation}\label{p-surgery-relation-with-euler-characteristic}
\chi(N)=\left\{\begin{array}{ll}
\chi(M)+2&p={\rm odd}\\
\chi(M)-2&p={\rm even}.\\
\end{array}\right.
\end{equation}

{\em 2)} Let $M=2n+1$, $n\ge 0$. If $M=\partial V$, then $V$ can be chosen a manifold with $\chi(V)=0$, i.e., having the same Euler characteristic of $M$.

{\em 3)} Let $M=2n$, $n\ge 0$. If $M=\partial V$, then $\chi(M)=2\chi(V)$.
\end{theorem}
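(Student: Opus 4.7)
The plan is to treat the three parts in order, invoking only the additivity and Poincar\'e-duality entries collected in Table~\ref{properties-euler-characteristic-and-examples}.

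For part (1) I would compute directly via the excision couple. Let $T = S^{p} \times D^{2n-p}$ denote the tubular neighborhood of the surgery sphere and $T' = D^{p+1} \times S^{2n-p-1}$ its replacement; both have the same boundary $\partial T = \partial T' = S^{p} \times S^{2n-p-1}$. Applying the excision formula to $M = (M \setminus \mathrm{int}\, T) \cup T$ and to $N = (M \setminus \mathrm{int}\, T) \cup T'$, the complement and the overlap contribute the same value in both computations, leaving
\[
\chi(N) - \chi(M) = \chi(T') - \chi(T) = \chi(S^{2n-p-1}) - \chi(S^{p}).
\]
Since $2n-p-1$ has parity opposite to $p$, one has $\chi(S^{p}) = 1+(-1)^{p}$ and $\chi(S^{2n-p-1}) = 1-(-1)^{p}$, so the difference equals $-2(-1)^{p}$, namely $+2$ for odd $p$ and $-2$ for even $p$, as required.

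Part (3) I would dispatch by doubling. The closed manifold $DV = V \cup_{\partial V} V$ has odd dimension $2n+1$, so $\chi(DV)=0$ by the Poincar\'e-duality row of the table. The same excision formula applied to the union $DV = V \cup V$ with intersection $\partial V = M$ yields $\chi(DV) = 2\chi(V) - \chi(M)$, from which the identity $\chi(M) = 2\chi(V)$ drops out.

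For part (2), which is the subtle step and the main obstacle, I would start from any bounding $V_{0}$ of $M$, of even dimension $2n+2$, and modify it by operations that keep $\partial V_{0} = M$ fixed until $\chi$ vanishes. An interior $p$-surgery on $V_{0}$ does not touch the boundary and, by the computation of part (1) applied to the even-dimensional $V_{0}$, changes $\chi(V_{0})$ by $-2(-1)^{p} \in \{\pm 2\}$; iterating interior surgeries of the appropriate index therefore reaches $\chi = 0$ whenever $\chi(V_{0})$ is even. The difficulty is the parity obstruction, illustrated already by $V_{0} = D^{2}$ bounding $M = S^{1}$ with $\chi(D^{2}) = 1$, and I would remove it by first performing an interior connected sum with $\mathbb{R}P^{2n+2}$: since $\chi(\mathbb{R}P^{2n+2}) = 1$ and $\chi(S^{2n+2}) = 2$, the connected-sum formula $\chi(V_{0} \# X) = \chi(V_{0}) + \chi(X) - \chi(S^{2n+2})$ shifts $\chi(V_{0})$ by $-1$, reducing the situation to the even case already handled by surgery.
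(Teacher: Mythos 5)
Your proposal is correct and follows essentially the same route as the paper: the excision computation for part (1), the doubling $V\bigcup_M V$ with $\chi=0$ in odd dimensions for part (3), and for part (2) a parity correction by a projective space followed by interior surgeries of the appropriate index. The only (harmless) divergence is the parity step, where you take an interior connected sum with $\mathbb{R}P^{2n+2}$ (shifting $\chi$ by $-1$), whereas the paper adjoins a disjoint even-dimensional projective space (shifting $\chi$ by $+1$; the paper writes $\mathbb{R}P^{2n+1}$, evidently a slip, since $\chi(\mathbb{R}P^{2n+1})=0$ would not change the parity) before finishing with interior surgeries in the same way.
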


\begin{proof}
1) Let us first note that we can write $M=\overline{(M\setminus S^{p}\times
D^{n-p})}\bigcup(S^p\times D^{2n-p})$, hence we get
\begin{equation}\label{p-surgery-relation-with-euler-characteristic-a}
\left\{\begin{array}{ll}
\chi(M)&=\chi(\overline{(M\setminus S^{p}\times
D^{n-p})})+\chi(S^p\times D^{2n-p})\\
&=\chi(\overline{M\setminus S^{p}\times
D^{n-p}})+(1+(-1)^p).\\
\end{array}\right.\end{equation}
From (\ref{p-surgery-relation-with-euler-characteristic-a}) we get
\begin{equation}\label{p-surgery-relation-with-euler-characteristic-b}
\chi(\overline{M\setminus S^{p}\times
D^{n-p}})=\chi(M)-(1+(-1)^p)=\left\{\begin{array}{ll}
\chi(M)&p={\rm odd}\\
\chi(M)-2&p={\rm even}.\\
\end{array}\right.
\end{equation}
On the other hand one has
\begin{equation}\label{p-surgery-relation-with-euler-characteristic-c}
\left\{\begin{array}{ll}
\chi(N)&=\chi(\overline{M\setminus S^{p}\times
D^{n-p}})+\chi(D^{p+1}\times S^{2n-p-1})-\chi(S^{p}\times
S^{2n-p-1})\\
&=\chi(\overline{M\setminus S^{p}\times
D^{n-p}})+\left\{\begin{array}{ll}
2&p={\rm odd}\\
0&p={\rm even}\\
\end{array}\right\}.
\end{array}\right.
\end{equation}
Then from (\ref{p-surgery-relation-with-euler-characteristic-b}) and (\ref{p-surgery-relation-with-euler-characteristic-c}) we get
\begin{equation}\label{p-surgery-relation-with-euler-characteristic-d}
\chi(N)=\left\{\begin{array}{ll}
\chi(M)+2&p={\rm odd}\\
\chi(M)-2&p={\rm even}.\\
\end{array}\right.
\end{equation}

2) In fact, if $n=1$ then $M$ can be considered the boundary of a M\"obius strip $M_{ob}$, that has just $\chi(M_{ob})=0$. If $n\ge 3$, and $\chi(V)=2q$, we can add to $V$ $q$ times $p$-surgeries with $p$ even in order to obtain a manifold $V'$ that has the same dimension and boundary of $V$ but with Euler characteristic zero. Furthermore, if $\chi(V)=2q+1$, we consider the manifold $V''=V\sqcup \mathbb{R}P^{2n+1}$ that has the same dimension and boundary of $V$, but $\chi(V'')$ is even. Then we can proceed as before on $V''$.

3) Let us consider $V'=V\bigcup_MV$. Then one has $\chi(V')=0=2\chi(V)-\chi(M)$.
\end{proof}
\begin{example}
A {\em connected sum} of connected $n$-dimensional
manifolds $M$ and $N$ is the connected
$n$-dimensional manifold
\begin{equation}
M\sharp N=(M\setminus D^{n})\bigcup(S^{n-1}\times
D^{1})\bigcup(N\setminus D^{n}).
\end{equation}
$M\sharp N$ is the effect of the $0$-surgery on the disjoint
union $M\sqcup N$ which removes the framed $0$-embedding $
S^0\times D^{n}\hookrightarrow M\times N$ defined by the
disjoint union of the embeddings $D^{n}\hookrightarrow M$,
$D^{n}\hookrightarrow N$. \end{example}

\begin{example}
Given a $(n+1)$-dimensional manifold with boundary
$(M,\partial M)$ and an embedding $S^{i-1}\times
D^{n-i+1}\hookrightarrow\partial M$, $0\le i\le n+1$, we define the $(n+1)$-dimensional
manifold $(W,\partial W)$ obtained from $M$ by attaching a $i$-handle:
\begin{equation}
W=M\bigcup_{S^{i-1}\times D^{n-i+1}}D^{i}\times D^{n-i+1}=M\bigcup h^{i}.
\end{equation}
Then $\partial W$ is obtained from $\partial M$ by an
$(i-1)$-surgery:
\begin{equation}
\partial W=(\partial M\setminus S^{i-1}\times  D^{n-i+1})\bigcup_{
S^{i-1}\times S^{n-i}}D^{i}\times S^{n-1}.
\end{equation}
\end{example}

\begin{definition}
An {\em elementary $(n+1)$-dimensional bordism of index
$i$} is the bordism $(W;M,N)$ obtained from $M\times
D^{1}$ by attaching a $i$-handle at $
S^{i-1}\times D^{n-i+1}\hookrightarrow M\times\{1\}$.
The {\em dual of an elementary $(n+1)$-dimensional bordism}
$(W;M,N)$ of index $i$ is the elementary
$(n+1)$-dimensional bordism $(W;N,M)$ of index
$(n-i+1)$, obtained by reversing the ends and regarding the
$i$-handle attached to $M\times D^{1}$ as a
$(n-i+1)$-handle attached to $N\times D^{1}$.
\end{definition}

\begin{theorem}[Handle decomposition of bordisms in the category $\mathfrak{M}_\infty$]\label{handle-decomposition-bordisms}
{\em 1)} Every bordism $(W;M,N)$, $\dim W=n+1$, $\dim M=\dim
N=n$, has a handle decomposition of the union of a finite sequence
\begin{equation}
(W;M,N)=(W_1;M_0,M_1)\bigcup(W_2;M_1,M_2)\bigcup\cdots\bigcup(W_k;M_{k-1},M_k)
\end{equation}
of adjoining elementary bordisms $(W_s;M_{s-1},M_s)$ with index
$(i_s)$ such that $0\le i_1\le i_2\le\cdots\le i_k\le n+1$.

{\em 2)} Closed $n$-dimensional manifolds $M$, $N$
are  bordant iff $N$ can be obtained from $M$ by a sequence of
surgeries.

{\em 3)} Every closed $n$-dimensional manifold $M$
can be obtained from $\varnothing$ by attaching handles:
\begin{equation}
M=h^{i_0}\bigcup h^{i_1}\bigcup\cdots\bigcup h^{i_k}.
\end{equation}
Furthermore, $M$ has a Morse function $f:M\to\mathbb{R}$ with critical points $\{x_{i_0},x_{i_1},\cdots,x_{i_k},\}$, where $x_\lambda$ is a critical point with index $\lambda$, and the corresponding vector field $\zeta=\GRAD f:M\to TM$ has zero-value only at such critical points. (See Fig. \ref{passing-through-critical-point-attaching-handle}.)
\end{theorem}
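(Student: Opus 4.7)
The plan is to derive all three parts from Morse theory and the lemmas on Morse--Smale functions and CW structure already collected in Lemma \ref{morse-smale-functions} and Lemma \ref{morse-functions-and-cw-complexes}. First I would take an admissible Morse function $f:W\to[0,1]$ with $f^{-1}(0)=M$, $f^{-1}(1)=N$, and no critical points on $\partial W$; existence is standard and is implicit in the lemmas above. Then I would perturb $f$ to a Morse--Smale function so that distinct critical points have distinct critical values and the values are ordered by index, i.e.\ if $i_s<i_t$ then $f(x_{i_s})<f(x_{i_t})$. Choosing regular values $0=c_0<c_1<\cdots<c_k=1$ separating the critical values, set $M_s=f^{-1}(c_s)$ and $W_s=f^{-1}([c_{s-1},c_s])$. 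By part (4) of Lemma \ref{homological-euler-characteristic} (no critical points implies cylinder), every $W_s$ containing exactly one critical point of index $i_s$ is an elementary bordism of that index, and the indices are non-decreasing by construction. This proves part (1).

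For part (2), the ``if'' direction is immediate: a surgery on $M$ that kills a framed embedding $S^{p}\times D^{n-p}\hookrightarrow M$ is the upper boundary of the elementary bordism obtained from $M\times D^1$ by attaching a $(p+1)$-handle along this embedding, so repeated surgeries yield a bordism between $M$ and $N$. For the ``only if'' direction I would apply part (1): writing $(W;M,N)$ as a union of elementary bordisms $(W_s;M_{s-1},M_s)$ of index $i_s$, the description of an elementary bordism of index $i$ shows that $M_s$ is obtained from $M_{s-1}$ by an $(i_s-1)$-surgery. Concatenating these surgeries takes $M=M_0$ to $N=M_k$.

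For part (3), I would apply part (1) to the bordism $(M;\varnothing,M)$, viewed as $M$ with boundary $\varnothing\sqcup M$. A handle decomposition of this bordism starting from $\varnothing$ must begin by attaching $0$-handles (since one cannot attach a handle of positive index to the empty set), and proceeds by attaching handles of non-decreasing index, yielding $M=h^{i_0}\cup h^{i_1}\cup\cdots\cup h^{i_k}$. The associated Morse function is the one constructed in the proof of (1), restricted to this bordism; its gradient vector field $\zeta=\GRAD f$ vanishes precisely at the critical points $\{x_{i_0},\ldots,x_{i_k}\}$, by Lemma \ref{morse-smale-functions}(2).

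The main technical obstacle is the rearrangement step: showing that a Morse function can be chosen (or modified) so that the critical values are ordered by index. This is the classical Morse rearrangement theorem, which requires isotoping the descending and ascending disks of consecutive critical points to be disjoint and then using a flow argument to separate their critical values; the argument is entirely parallel to the handle-slide/handle-cancellation calculus used in Smale's proof of the $h$-cobordism theorem, and nothing in the Ricci-flow context alters it. Every other step reduces to a direct application of the Morse lemmas already stated, so I would invoke the rearrangement theorem as a black box and refer to \cite{HIRSCH2} for the proof.
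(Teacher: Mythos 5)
Your argument is correct, but it takes a genuinely different (and far more explicit) route than the paper. The paper disposes of the whole theorem in one sentence, by appealing to the CW-substitute of a manifold (Theorems \ref{ma-CW} and \ref{ma2}): it treats the handle decomposition as a known repackaging of the fact that every manifold carries a CW/handle structure, and does not address the index ordering, the handle--surgery dictionary, or the gradient field statement at all. You instead rebuild the decomposition from Morse theory: an admissible Morse function on the bordism, the rearrangement theorem to order critical values by index, slicing at regular values to get elementary bordisms (using Lemma \ref{homological-euler-characteristic}(4) for the critical-point-free pieces), and the standard correspondence ``attaching an $(p+1)$-handle to $M\times D^1$ $\Leftrightarrow$ $p$-surgery on the upper boundary'' for part (2). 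This is essentially Smale/Milnor's classical argument, and it buys what the paper's one-liner does not: parts (1)--(3) become visible consequences of a single construction, expressed entirely through the lemmas already stated (Lemma \ref{morse-smale-functions}, Lemma \ref{morse-functions-and-cw-complexes}), with only the rearrangement theorem black-boxed. One small correction in part (3): since $M$ is closed, $(M;\varnothing,M)$ is not a bordism in the sense defined ($\partial M=\varnothing$, not $M$); you should regard $M$ as the bordism $(M;\varnothing,\varnothing)$ between empty $(n-1)$-manifolds, or equivalently apply the Morse--Smale function of Lemma \ref{morse-smale-functions} directly to $M$, whose sublevel sets then give $M=h^{i_0}\cup\cdots\cup h^{i_k}$ with $\zeta=\GRAD f$ vanishing exactly at the critical points. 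With that adjustment the proof is complete.
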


\begin{proof}
In fact any $n$-dimensional manifold can be characterized by means of its
corresponding CW-substitute.
\end{proof}
\begin{figure}[h]
\centering
\centerline{\includegraphics[height=4cm]{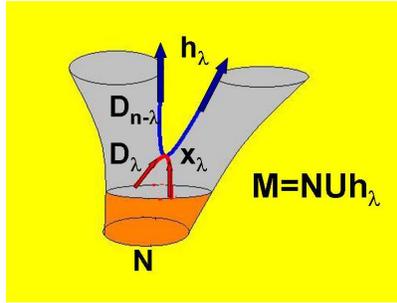}}
\caption{Passing through critical point $x_\lambda\in M$, (index $\lambda$), of Morse function $f:M\to\mathbb{R}$, identified by attaching handle to a manifold $N$. (Separatrix diagram.)}
\label{passing-through-critical-point-attaching-handle}
\end{figure}

\begin{example}[Sphere $S^{2}$]
In this case one has the following handle decomposition: $
S^{2}=h^{0}\bigcup h^{2}$, with $ h^{0}=
D^{0}\times D^{2}=\left\{0\right\}\times D^{2}$, the
{\em south hemisphere}, and $h^{2}=
D^{2}\times D^{0}=D^{2}\times\left\{1\right\}$, the
{\em north hemisphere}.
\end{example}

\begin{example}[Torus $T^{2}=S^{1}\times S^{1}$]
This $2$-dimensional manifold has the following
CW-complex structure: $T^{2}=
h^{0}\bigcup h^{1}\bigcup h^{1}\bigcup h^{2}$,
with $ h^{0}=\left\{0\right\}\times D^{2}$, $
h^{1}= D^{1}\times D^{1}$, $h^{2}=
D^{2}\times  D^{0}=D^{2}\times\left\{1\right\}$.
\end{example}

\begin{remark}
One way to prove whether two manifolds are
diffeomorphic is just to suitably use bordism and surgery techniques. (See, e.g.
Refs.{\em\cite{WALL1, WALL2}}.) In fact we should first prove that they are
bordant and then see if some bordism can be modified by successive
surgeries on the interior to become an s-bordism.
\end{remark}

\begin{theorem}[Homology properties in $\mathfrak{M}_\infty$.]\label{ma5}
Let $M$ be a $n$-dimensional manifold. One has the
following homology structures.

Let $\left\{C_\bullet(M;A)\cong
A\otimes_{\mathbb{R}}C_\bullet(M;\mathbb{R}),\partial\right\}$ be the
chain complex extension of the singular chain complex of $M$. Then
one has the following exact commutative diagram:

\begin{equation}
\xymatrix{&&0\ar[d]&0\ar[d]&&\\
&0\ar[r]&B_{\bullet} (M;A)\ar[d]\ar[r]&
          Z_{\bullet} (M;A)\ar[d]\ar[r]&
          H_{\bullet} (M;A)\ar[r]&0\\
&&C_{\bullet} (M;A)\ar[d]\ar@{=}[r]&
          C_{\bullet} (M;A)\ar[d]&&\\
0\ar[r]&{}^A{\underline{\Omega}}_{\bullet,s}(M)\ar[r]& Bor_{\bullet}
(M;A)\ar[d]\ar[r]&Cyc_{\bullet} (M;A)\ar[d]\ar[r]&0&\\
&&0&0&&}\end{equation}

where:
$$\left\{\begin{array}{l}
B_{\bullet} (M;A)=\ker(\partial|_{\bar C_{\bullet} (M;A)});\hskip
2pt Z_{\bullet} (M;A)=\IM(\partial|_{C_{\bullet} (M;A)});\\
H_{\bullet} (M;A)=Z_{\bullet} (M;A)/B_{\bullet} (M;A),\\
b\in[a]\in Bor_{\bullet}(M;A)\Rightarrow a-b=\partial c, \hskip 3pt
c\in C_{\bullet}(M;A);\\
b\in[a]\in Cyc_{\bullet}(M;A)\Rightarrow \partial(a-b)=0;\\
b\in[a]\in {}^A{\underline{\Omega}}_{\bullet,s}(M)\Rightarrow
                 \left\{\begin{array}{l}
                          \partial a=\partial b=0\\
                          a-b=\partial c,\quad c\in C_{\bullet}(M;A)\\
                          \end{array}
                                \right\}.\\
\end{array}\right.$$
Furthermore, one has the following canonical isomorphism:
${}^A{\underline{\Omega}}_{\bullet,s}(M)\cong H_{\bullet}(M;A)$. As
$C_{\bullet}(M;A)$ is a free two-sided projective $A$-module, one
has the unnatural isomorphism: $Bor_{\bullet}(M;A)\cong
{}^A{\underline{\Omega}}_{\bullet,s}(M)\bigoplus
Cyc_{\bullet}(M;A)$.
\end{theorem}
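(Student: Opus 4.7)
The plan is to establish the commutative diagram by verifying exactness of each row and each column, and then to read off the two claimed isomorphisms. The top row $0\to B_\bullet(M;A)\to Z_\bullet(M;A)\to H_\bullet(M;A)\to 0$ is essentially the definition of singular homology, with boundaries sitting inside cycles and the quotient being by definition $H_\bullet(M;A)$. The two vertical columns come directly from the equivalence relations defining $Bor_\bullet$ and $Cyc_\bullet$: the rule $a\sim_{Bor}b \iff a-b\in\partial C_\bullet(M;A)$ exhibits $Bor_\bullet(M;A)\cong C_\bullet(M;A)/B_\bullet(M;A)$, while $a\sim_{Cyc}b\iff \partial(a-b)=0$ exhibits $Cyc_\bullet(M;A)\cong C_\bullet(M;A)/Z_\bullet(M;A)$. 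The inclusion $B_\bullet\hookrightarrow Z_\bullet$ then yields the natural surjection $Bor_\bullet\twoheadrightarrow Cyc_\bullet$ appearing in the bottom row.

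Next, I would apply the $3\times 3$ lemma (a snake-style chase) to the two columns together with the top row to produce the bottom row as the canonically induced short exact sequence of cokernels. The kernel of $Bor_\bullet(M;A)\to Cyc_\bullet(M;A)$ is visibly $Z_\bullet(M;A)/B_\bullet(M;A)\cong H_\bullet(M;A)$. On the other hand, a class $[a]\in {}^A\underline{\Omega}_{\bullet,s}(M)$ is represented by a cycle, and two such are identified iff their difference is a boundary: this is precisely the data defining $H_\bullet(M;A)$. This gives the canonical isomorphism ${}^A\underline{\Omega}_{\bullet,s}(M)\cong H_\bullet(M;A)$ and simultaneously establishes exactness of the bottom row.

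Finally, to obtain the unnatural splitting, I would exploit the given identification $C_\bullet(M;A)\cong A\otimes_{\mathbb{R}}C_\bullet(M;\mathbb{R})$, which makes $C_\bullet(M;A)$ a two-sided free, hence projective, $A$-module. Projectivity implies that the right column $0\to Z_\bullet\to C_\bullet\to Cyc_\bullet\to 0$ splits as $A$-modules, so $Cyc_\bullet(M;A)$ is itself projective, and consequently the bottom short exact sequence splits, producing the decomposition $Bor_\bullet(M;A)\cong{}^A\underline{\Omega}_{\bullet,s}(M)\oplus Cyc_\bullet(M;A)$; this isomorphism is unnatural precisely because it depends on the choice of a section of the projection.

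The principal subtlety will be bookkeeping the two equivalence relations carefully: one must verify that the auxiliary maps in the $3\times 3$ chase are well-defined on classes and that the kernel of $Bor_\bullet\to Cyc_\bullet$ really is the singular integral bordism group, rather than some a priori larger quotient built from mixed boundary/cycle representatives. Once the diagrammatic chase is laid out and the identification of representatives clarified, the remaining steps are routine homological algebra.
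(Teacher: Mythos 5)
Your diagram-building and the identification of ${}^A\underline{\Omega}_{\bullet,s}(M)$ with $H_\bullet(M;A)$ follow exactly the route the paper intends: its own proof is a one-line appeal to ``standard results in homological algebra'' (with a pointer to \cite{PRA1}), and your $3\times 3$ chase identifying the kernel of $Bor_\bullet(M;A)\to Cyc_\bullet(M;A)$ with $Z_\bullet/B_\bullet\cong H_\bullet(M;A)$ is precisely the standard argument behind that appeal. (Note in passing that the paper's displayed definitions swap the usual roles of $B_\bullet$ and $Z_\bullet$ --- it writes $B_\bullet=\ker\partial$, $Z_\bullet=\IM\partial$ --- but the diagram only makes sense with the standard reading, boundaries inside cycles, which is the one you tacitly use; that is an issue with the statement, not with your argument.)

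The one step that needs repair is the splitting. You argue: $C_\bullet(M;A)$ is projective, hence the column $0\to Z_\bullet\to C_\bullet\to Cyc_\bullet\to 0$ splits, hence $Cyc_\bullet$ is projective, hence the bottom row splits. The first implication is false in general: projectivity of the \emph{middle} term of a short exact sequence does not force splitting (e.g.\ $0\to\mathbb{Z}\xrightarrow{\,2\,}\mathbb{Z}\to\mathbb{Z}/2\to 0$ has free middle term and does not split); what splits a sequence is projectivity of the \emph{quotient}. That is available here, but for a different reason: $Cyc_\bullet(M;A)=C_\bullet(M;A)/Z_\bullet(M;A)\cong\IM\partial=B_{\bullet-1}(M;A)$, and since $C_\bullet(M;A)\cong A\otimes_{\mathbb{R}}C_\bullet(M;\mathbb{R})$ with $A$ free as an $\mathbb{R}$-module, one gets $B_{\bullet-1}(M;A)\cong A\otimes_{\mathbb{R}}B_{\bullet-1}(M;\mathbb{R})$, the scalar extension of a real vector space, hence a free and in particular projective $A$-module. (Equivalently: choose an $\mathbb{R}$-linear splitting of $Z_\bullet(M;\mathbb{R})\subset C_\bullet(M;\mathbb{R})$, automatic for vector spaces, and apply $A\otimes_{\mathbb{R}}-$.) With $Cyc_\bullet$ projective, both the right column and the bottom row split, giving the unnatural decomposition, and the rest of your write-up stands. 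The paper's own wording (``As $C_\bullet(M;A)$ is a free two-sided projective $A$-module\dots'') commits the same shortcut, so you are consistent with it, but a careful proof should route the splitting through the projectivity of the quotient as above.
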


\begin{proof}
It follows from standard results in homological algebra and homology
in topological spaces. (For more details about see also \cite{PRA1}.)
\end{proof}

\begin{theorem}[Cohomology properties in $\mathfrak{M}_\infty$]\label{ma6}
Let $M$ be a $n$-dimensional manifold. One has the
following cohomology structures.

Let $\left\{C^\bullet(M;A)\equiv Hom_{A}(
C_\bullet(M;A);A)\cong
Hom_{\mathbb{R}}(C_\bullet(M;\mathbb{R});A),\delta\right\}$ be the dual of
the chain complex $C_\bullet(M;A)$ considered in above theorem. Then
one has the following exact commutative diagram:

\begin{equation}
\xymatrix{&&0\ar[d]&0\ar[d]&&\\
&0\ar[r]&B^{\bullet} (M;A)\ar[d]\ar[r]&
          Z^{\bullet} (M;A)\ar[d]\ar[r]&
          H^{\bullet} (M;A)\ar[r]&0\\
&&C^{\bullet} (M;A)\ar[d]\ar@{=}[r]&
          C^{\bullet} (M;A)\ar[d]&&\\
0\ar[r]&{}^A{\underline{\Omega}}^{\bullet}_s(M)\ar[r]& Bor^{\bullet}
(M;A)\ar[d]\ar[r]&Cyc^{\bullet} (M;A)\ar[d]\ar[r]&0&\\
&&0&0&&}\end{equation}

where:
$$\left\{\begin{array}{l}
B^{\bullet} (M;A)=\ker(\delta|_{\bar C^{\bullet} (M;A)});\hskip
2pt
Z^{\bullet} (M;A)=\IM(\delta|_{C^{\bullet} (M;A)});\\
H^{\bullet} (M;A)=Z^{\bullet} (M;A)/B^{\bullet} (M;A);\\
b\in[a]\in Bor^{\bullet}(M;A)\Rightarrow a-b=\delta c;\hskip 3pt
                 c\in C^{\bullet}(M;A);\\
b\in[a]\in Cyc^{\bullet}(M;A)\Rightarrow \delta(a-b)=0;\\
b\in[a]\in {}^A{\underline{\Omega}}^{\bullet}_s(M)\Rightarrow
                 \left\{\begin{array}{l}
                          \delta a=\delta b=0\\
                          a-b=\delta c,\quad c\in C^{\bullet}(M;A)\\
                          \end{array}
                                \right\}.\\
\end{array}\right.$$
Furthermore, one has the following canonical isomorphism:
${}^A{\underline{\Omega}}^{\bullet}_s(M)\cong H^{\bullet}(M;A)$. As
$C^{\bullet}(M;A)$ is a free two-sided projective $A$-module, one
has the unnatural isomorphism: $Bor_{\bullet}(M;A)\cong
{}^A{\underline{\Omega}}^{\bullet}_s(M)\bigoplus
Cyc^{\bullet}(M;A)$.

\end{theorem}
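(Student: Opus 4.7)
The plan is to mirror the proof of Theorem \ref{ma5} under the Hom-duality between chains and cochains, since the cochain complex $\{C^\bullet(M;A),\delta\}$ is obtained by applying $\mathrm{Hom}_A(-,A)$ to the chain complex $\{C_\bullet(M;A),\partial\}$ already analyzed. First I would introduce $B^\bullet$, $Z^\bullet$, $H^\bullet$ in the standard way as coboundaries, cocycles, and their quotient, so that the top row of the diagram is tautologically exact. Then I would define $Bor^\bullet(M;A)$ and $Cyc^\bullet(M;A)$ as quotients of $C^\bullet(M;A)$ by the equivalences $a\sim b \Leftrightarrow a-b=\delta c$ (for some $c\in C^\bullet(M;A)$) and $a\sim b \Leftrightarrow \delta(a-b)=0$ respectively, which are the evident duals of the relations used in Theorem \ref{ma5}.

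Next I would verify the exactness of the commutative diagram. The middle column is $C^\bullet(M;A)=C^\bullet(M;A)$ trivially, and commutativity together with exactness of the rows reduces to checking that the inclusions $B^\bullet\hookrightarrow Bor^\bullet$ and $Z^\bullet\hookrightarrow Cyc^\bullet$ descend correctly through the quotients. This is a direct diagram chase identical in form to the homology case; it is precisely the cohomological instance of the general formalism referenced in \cite{PRA1}.

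The canonical isomorphism ${}^A\underline{\Omega}^\bullet_s(M)\cong H^\bullet(M;A)$ then follows by tracing definitions: an element of the left-hand side is an equivalence class $[a]$ with $\delta a=\delta b=0$ and $a-b=\delta c$, which is exactly the data of a cohomology class. For the unnatural splitting $Bor^\bullet(M;A)\cong {}^A\underline{\Omega}^\bullet_s(M)\oplus Cyc^\bullet(M;A)$, I would produce a projective splitting of the short exact sequence of $A$-modules given by the bottom row, using that $C^\bullet(M;A)=\mathrm{Hom}_A(C_\bullet(M;A),A)$ inherits a free two-sided projective $A$-module structure from the singular basis of $C_\bullet(M;A)$.

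The main obstacle I expect is the two-sided projectivity of $C^\bullet(M;A)$ when $A$ is noncommutative: one must set up the left and right $A$-actions on the dual cochains symmetrically and check that the generating system dual to a singular basis of $C_\bullet(M;A)$ is indeed a two-sided free basis. Once this is established, the standard lifting argument produces the required complementary summand. Everything else is the verbatim dual of the homology proof, so no further computation is warranted beyond the reference already provided.
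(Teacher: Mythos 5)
Your proposal is correct and follows essentially the same route as the paper: the paper's own proof simply invokes standard results of cohomological algebra (the dual of Theorem \ref{ma5}, with a citation to \cite{PRA1}), and your dualization of the chain-level argument, together with the diagram chase and the projective splitting of the bottom row, is precisely the fleshed-out version of that argument. Your extra care about the two-sided module structure on $C^{\bullet}(M;A)$ for noncommutative $A$ is a reasonable refinement of what the paper leaves implicit, not a departure from its method.
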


\begin{proof}
It follows from standard results in cohomological algebra and
cohomology in topological spaces. (See also \cite{PRA1}.)
\end{proof}

\begin{definition}
We say that a manifold $M$ is {\em cohomologically trivial} if all the
cohomology groups $H^r(M;A)$ vanish for $r\ge 1$.
\end{definition}

\begin{theorem}
Let $M$ be a $n$-dimensional manifold. The following
propositions are equivalent.

{\em(i)} $M$ is cohomologically trivial.

{\em(ii)} $H^r(M;\mathbb{K})=0$, $\forall r\ge 1$.

{\em(iii)} $H_r(M;A)\cong H_r(M;\mathbb{K})=0$, $\forall r\ge 1$.

{\em(iv)} The complex $\{C^\bullet(M;A),\delta\}$ is acyclic,
i.e., the sequence

\begin{equation}\label{complex}
\xymatrix{0\ar[r]&Z^0\ar[r]^(.4){\epsilon}&C^0(M;A)\ar[r]^{\delta}&C^1(M;A)\ar[r]^(.6){\delta}&\cdots\ar[r]^(.4){\delta}&
C^n(M;A)\ar[r]^(.6){\delta}&0}
\end{equation}
is exact.
\end{theorem}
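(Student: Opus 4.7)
The plan is to treat (i) $\Leftrightarrow$ (iv) as essentially a tautology, and then use the tensor product identification $C_\bullet(M;A)\cong A\otimes_\mathbb{R} C_\bullet(M;\mathbb{R})$ from Theorem \ref{ma5} (together with the dual identification for cochains from Theorem \ref{ma6}) to reduce the remaining equivalences to linear algebra over a field.

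First I would establish (i) $\Leftrightarrow$ (iv): exactness of the sequence \eqref{complex} at position $r\ge 1$ is literally the assertion $\ker(\delta^r)/\IM(\delta^{r-1})=0$, i.e.\ $H^r(M;A)=0$, while exactness at $C^0(M;A)$ via $\epsilon:Z^0\hookrightarrow C^0$ is automatic. So these two statements are only notationally distinct and I would dispatch them in one short paragraph.

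Next I would tackle (i) $\Leftrightarrow$ (ii) $\Leftrightarrow$ (iii). Since $C_\bullet(M;\mathbb{K})$ is a free chain complex of modules over a field $\mathbb{K}$, the Universal Coefficient Theorem gives
\begin{equation*}
H_r(M;A)\cong A\otimes_\mathbb{K} H_r(M;\mathbb{K}),\qquad H^r(M;A)\cong \mathrm{Hom}_\mathbb{K}\!\bigl(H_r(M;\mathbb{K}),A\bigr),
\end{equation*}
with no Tor or Ext contributions because every $\mathbb{K}$-module is free and hence projective. Assuming $A$ is a faithfully flat $\mathbb{K}$-algebra (the standing setting in the referenced theory, where $A$ is a commutative algebra over its ground field $\mathbb{K}$), each of the four vanishing conditions on $H_r(M;\mathbb{K})$, $H_r(M;A)$, $H^r(M;A)$, $H^r(M;\mathbb{K})$ becomes equivalent to the single condition $H_r(M;\mathbb{K})=0$ for $r\ge 1$. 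I would write out this equivalence once, note that it closes the loop (ii) $\Rightarrow$ (iii) $\Rightarrow$ (i) $\Rightarrow$ (ii), and combine with the first paragraph to finish.

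The main obstacle I anticipate is purely one of hypothesis bookkeeping, not mathematics: the excerpt does not spell out the relationship between $A$ and $\mathbb{K}$, so I would need to invoke the faithful flatness of $A$ over $\mathbb{K}$ (which is the framework of \cite{PRA1}) to guarantee that $A\otimes_\mathbb{K} V=0$ and $\mathrm{Hom}_\mathbb{K}(V,A)=0$ each force $V=0$. Once that is stated, the proof is essentially a one-line Universal Coefficient argument plus the definitional identification of $H^\bullet$ with acyclicity of the cochain complex.
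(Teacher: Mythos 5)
Your proposal is correct and follows essentially the same route as the paper: the paper likewise dispatches (i)$\Leftrightarrow$(iv) as the definitional identification of acyclicity with vanishing of $H^r(M;A)$ for $r\ge 1$, and handles (i)$\Leftrightarrow$(ii)$\Leftrightarrow$(iii) via the coefficient-change isomorphisms $H^r(M;A)\cong H^r(M;\mathbb{K})\bigotimes_{\mathbb{K}}A$, $H_r(M;A)\cong H_r(M;\mathbb{K})\bigotimes_{\mathbb{K}}A$ and $H^r(M;A)\cong Hom_A(H_r(M;A);A)$, which is exactly your Universal Coefficient argument with no Tor/Ext terms. Your only addition is to make explicit the faithful-flatness (or at least faithfulness) of $A$ over $\mathbb{K}$ needed for the reverse implications, a hypothesis the paper leaves implicit in its framework.
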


\begin{proof}
(i)$\Leftrightarrow$(ii) since $H^r(M;A)\cong
H^r(M;\mathbb{K})\bigotimes_{\mathbb{K}}A$.

(i)$\Leftrightarrow$(iii) since $H^r(M;A)\cong Hom_A(H_r(M;A);A)$
and considering the following isomorphism $H_r(M;A)\cong
H_r(M;\mathbb{K})\bigotimes_{\mathbb{K}}A$.

(i)$\Leftrightarrow$(iv) since the exactness of the sequence
(\ref{complex}) is equivalent to $H^r(M;A)=0$, for $r\ge 1$.
\end{proof}

\begin{theorem}
Let $M$ be a $n$-dimensional manifold modeled on
the algebra $A$. The following propositions are
equivalent.

{\em(i)} $M$ is cohomologically trivial.

{\em(ii)} $H_r(M;A)=0$, $r\ge 1$.

{\em(iii)} $H_r(M;\mathbb{K})=H^r(M;\mathbb{K})=0$, $r\ge 1$.

\end{theorem}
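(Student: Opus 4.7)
The plan is to obtain all three equivalences as consequences of the universal-coefficient-type isomorphisms already recorded just before the statement, namely
$$
H^r(M;A)\cong H^r(M;\mathbb{K})\otimes_{\mathbb{K}}A,\quad H_r(M;A)\cong H_r(M;\mathbb{K})\otimes_{\mathbb{K}}A,\quad H^r(M;A)\cong \mathrm{Hom}_A(H_r(M;A);A),
$$
together with the previous theorem, which already establishes that cohomological triviality is equivalent to $H^r(M;\mathbb{K})=0=H_r(M;\mathbb{K})$ for $r\ge 1$. Accordingly, I would organise the work as a cycle $(\mathrm{iii})\Rightarrow(\mathrm{i})\Rightarrow(\mathrm{ii})\Rightarrow(\mathrm{iii})$ rather than proving all pairwise implications directly.

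The step $(\mathrm{iii})\Rightarrow(\mathrm{i})$ is immediate from the first tensor-product formula: if $H^r(M;\mathbb{K})=0$, then $H^r(M;A)\cong 0\otimes_{\mathbb{K}}A=0$, which is the definition of cohomological triviality. For $(\mathrm{i})\Rightarrow(\mathrm{ii})$ I would invoke the $\mathrm{Hom}$-isomorphism: $H^r(M;A)=0$ gives $\mathrm{Hom}_A(H_r(M;A);A)=0$, and combined with the fact that $H_r(M;A)\cong H_r(M;\mathbb{K})\otimes_{\mathbb{K}}A$ is free as an $A$-module (so that $\mathrm{Hom}_A(-,A)$ is a faithful functor on it), one forces $H_r(M;A)=0$. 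Alternatively, the previous theorem directly identifies $(\mathrm{i})$ with the vanishing $H_r(M;A)\cong H_r(M;\mathbb{K})=0$, which already includes $(\mathrm{ii})$.

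The one non-trivial implication is $(\mathrm{ii})\Rightarrow(\mathrm{iii})$. Starting from $H_r(M;A)=0$, I would use the second tensor-product isomorphism and the fact that the unital $\mathbb{K}$-algebra $A$ contains $\mathbb{K}\cdot 1$ as a direct $\mathbb{K}$-summand, hence is a faithfully flat $\mathbb{K}$-module: vanishing of $H_r(M;\mathbb{K})\otimes_{\mathbb{K}}A$ then forces $H_r(M;\mathbb{K})=0$. Since $\mathbb{K}$ is a field, the universal coefficient theorem over $\mathbb{K}$ gives $H^r(M;\mathbb{K})\cong\mathrm{Hom}_{\mathbb{K}}(H_r(M;\mathbb{K}),\mathbb{K})=0$, completing the cycle.

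The main obstacle I foresee is not the logical flow but the bookkeeping around the coefficient algebra $A$: one must verify that the two tensor-product isomorphisms are genuinely valid in the generality (``manifold modeled on $A$'') considered here, and that the faithful-flatness step goes through in the category of $A$-modules intended by the author. Once these structural points are granted, the equivalences reduce to a routine diagram chase built on the previous theorem and standard universal-coefficient arguments over the field $\mathbb{K}$.
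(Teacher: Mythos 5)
Your proposal is correct, with the caveat that the paper itself offers \emph{no} proof of this statement at all: it is stated immediately after the preceding theorem and left as an evident consequence of it and of the coefficient isomorphisms $H^r(M;A)\cong H^r(M;\mathbb{K})\otimes_{\mathbb{K}}A$, $H_r(M;A)\cong H_r(M;\mathbb{K})\otimes_{\mathbb{K}}A$, $H^r(M;A)\cong \mathrm{Hom}_A(H_r(M;A);A)$ that the author invokes (without justification) in the previous proof. Your cyclic argument $(\mathrm{iii})\Rightarrow(\mathrm{i})\Rightarrow(\mathrm{ii})\Rightarrow(\mathrm{iii})$ is precisely the reduction the paper tacitly intends, and you add the details it leaves unstated — notably the faithful flatness of $A$ over $\mathbb{K}$ (via the summand $\mathbb{K}\cdot 1\subset A$) needed to pass from $H_r(M;\mathbb{K})\otimes_{\mathbb{K}}A=0$ to $H_r(M;\mathbb{K})=0$, and the universal-coefficient step over the field $\mathbb{K}$ — so your write-up is, if anything, more complete than the source, conditional on the same structural isomorphisms the paper takes for granted.
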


\begin{example}
A manifold contractible to a point is cohomologically
trivial.
\end{example}

\begin{theorem}[h-Cobordism groups]
{\em 1)} If  $(W; X_0,X_1)$, $\partial W=X_0\sqcup X_1$, is a h-cobordant and $X_i$, $i=0,1$, are simply connected, then $W$ is a trivial h-cobordism. For $n=4$ the h-cobordism theorem is true topologically.

{\em 2)} A simply connected manifold $M$ is h-cobordant to the sphere $S^n$ iff $M$ bounds a contractible manifold.

{\em 3)} If $M$ is a homotopy sphere, then $M\sharp(-M)$ bounds a contractible manifold.

{\em 4)} If a homotopy sphere of dimension $2k$ bounds a stably-parallelizable manifold $M$ then it bounds a contractible manifold $M_1$. (See Definition \ref{parallelizable-manifold}.)

{\em 5)} Let $\Theta_n$ denote the collection of all h-cobordism classes of homotopy $n$-spheres. $\Theta_n$ is an additive group with respect the connected sum,\footnote{The {\em connected sum} of two connected $n$-dimensional manifolds $X$ and $Y$ is the $n$-dimensional manifold $X\sharp Y$ obtained by excising the interior of embedded discs $D^n\subset X$, $D^n\subset Y$, and joining the boundary components $S^{n-1}\subset\overline{X\setminus D^n}$, $S^{n-1}\subset\overline{Y\setminus D^n}$, by $S^{n-1}\times I$.} where the sphere $S^n$ serves as zero element. The opposite of an element $X$ is the same manifold with reversed orientation, denoted by $-X$. In Tab. \ref{calculated-h-cobordism-groups-homotopy-sphere} are reported the expressions of some calculated groups $\Theta_n$.\footnote{It is interesting to add that another related notion of cobordism is the {\em H-cobordism} of $n$-dimensional manifold, $(V;M,N)$, $\partial V=M\sqcup N$, with $H_\bullet(M)\cong H_\bullet(N)\cong H_\bullet(V)$. An $n$-dimensional manifold $\Sigma$ is a {\em homology sphere} if $H_\bullet(\Sigma)=H_\bullet(S^n)$. Let $\Theta_n^H$ be the abelian group of $H$-cobordism classes of $n$-dimensional homology spheres, with addition by connected sum. (Kervaire's theorem.) For $n\ge 4$ every $n$-dimensional homology sphere $\Sigma$ is H-cobordant to a homology sphere and the forgethful map $\Theta_n\to \Theta_n^H$ is an isomorphism.}

\end{theorem}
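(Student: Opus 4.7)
The plan is to reduce all five parts to the classical Smale h-cobordism theorem together with the Kervaire--Milnor surgery machinery, working in the smooth category for $n\ge 5$ and invoking Freedman's topological results for $n=4$. For part (1), I would first apply Theorem \ref{handle-decomposition-bordisms} to put an admissible Morse function $f\colon W\to[0,1]$ on $W$ with $f^{-1}(0)=X_0$, $f^{-1}(1)=X_1$, yielding a handle decomposition of $W$ on top of $X_0\times I$. The hypothesis that both inclusions $X_i\hookrightarrow W$ are homotopy equivalences together with $\pi_1(X_i)=0$ gives $\pi_\bullet(W,X_0)=0$ and, by relative Hurewicz, $H_\bullet(W,X_0)=0$. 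I would then eliminate the $0$- and $1$-handles by the standard pair-cancellation arguments permitted by $\pi_1(X_0)=0$, and dually kill the $n$- and $(n{+}1)$-handles by applying the same reasoning to the reversed bordism. The remaining handles have index $2\le i\le n-1$, and because $H_\bullet(W,X_0)=0$ they cancel algebraically in pairs of complementary indices. The Whitney trick, valid smoothly for $\dim W\ge 6$ (i.e.\ $n\ge 5$) and topologically for $n=4$ by Freedman's techniques, promotes this algebraic cancellation to a geometric one, leaving $W\cong X_0\times I$.

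For part (2), the forward direction removes an open $(n{+}1)$-disk from the interior of an h-cobordism between $M$ and $S^n$, and I would verify that what remains is contractible with boundary $M$; the converse builds $W=C\setminus\mathrm{int}(D^{n+1})$ from a contractible filling $C$ and checks via the long exact sequence of the pair $(C,W)$ that both inclusions are homotopy equivalences. Part (3) uses the ``cylinder minus a disk'' construction: $V=(M\times I)\setminus\mathrm{int}(D^{n+1})$ deformation retracts onto $M$ hence is contractible, its boundary has three components, and a connected-sum interpretation along the interior $S^n$ component identifies $\partial V$ with $M\sharp(-M)$. For part (4), I would invoke the Kervaire--Milnor procedure: stable parallelizability of the bounding manifold supplies the framings for surgery below the middle dimension, allowing one to kill $\pi_1,\dots,\pi_{k-1}$ and obtain a $(k{-}1)$-connected bounding manifold; the surgery obstruction on the middle-dimensional form is then analyzed via Theorem \ref{obstructions-manifold-structures}(3), and under the even-dimensional hypothesis one shows it can be made to vanish, producing a contractible filling. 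Part (5) is then formal: the connected sum is commutative and associative, $M\sharp S^n\cong M$ places $[S^n]$ as the identity, and part (3) combined with part (2) provides $[-M]$ as an h-cobordism inverse of $[M]$.

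The main obstacle is the middle-index handle cancellation in part (1): the algebraic triviality $H_\bullet(W,X_0)=0$ does not by itself cancel handles, and one must geometrically realize the Whitney trick, which is precisely where the dimension hypothesis $n\ge 5$ (or Freedman's topological machinery for $n=4$) enters essentially. A secondary subtle point is the vanishing of the surgery obstruction in part (4), which relies on the even-dimensional structure of the quadratic form on the middle homology and is only available because the bounding manifold is stably parallelizable; without that hypothesis one finds only a parallelizable-cobordism rather than a contractible filling, which is exactly what the group $bP_{n+1}\triangleleft\Theta_n$ of Table \ref{calculated-h-cobordism-groups-homotopy-sphere} measures.
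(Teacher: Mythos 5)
Your overall strategy (reduce everything to Smale's handle-cancellation argument plus the Kervaire--Milnor surgery machinery) is sound, and it is in fact more detailed than what the paper does: the paper's own proof is essentially a survey, quoting Freedman's theorem, Smale's h-cobordism theorem and Kervaire--Milnor for parts (1)--(4), and only works out explicitly the abelian group structure via $\Gamma_n$ (Lemmas \ref{gamma-group-a} and \ref{gamma-group-b}); your parts (1) and (5) follow that classical line correctly. However, two of your explicit constructions fail. In part (3) you take $V=(M\times I)\setminus\mathrm{int}(D^{n+1})$ and claim it ``deformation retracts onto $M$ hence is contractible''; but $M$ is a homotopy $n$-sphere, not contractible, and deleting an interior disk does not retract onto $M$ in any case: a Mayer--Vietoris argument for $M\times I=V\cup_{S^n}D^{n+1}$ gives $H_n(V)\cong\mathbb{Z}^2\not=0$, and $\partial V$ genuinely has the three components $M\sqcup M\sqcup S^n$, which cannot be ``reinterpreted'' as the connected manifold $M\sharp(-M)$. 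The standard construction is $V=(M\setminus\mathrm{int}\,D^n)\times I$: a punctured homotopy sphere is contractible (simply connected with vanishing reduced homology), so $V$ is contractible, and $\partial V$ is the double of $M\setminus\mathrm{int}\,D^n$, which is exactly $M\sharp(-M)$ in the sense of the paper's footnote on connected sums. The same confusion appears in the forward direction of part (2): removing an open disk from the interior of an h-cobordism $W$ between $M$ and $S^n$ leaves boundary $M\sqcup S^n\sqcup S^n$ and a non-contractible manifold; what you must do is cap off the sphere end, $C=W\bigcup_{S^n}D^{n+1}$, and then Mayer--Vietoris (using that $S^n\hookrightarrow W$ is a homotopy equivalence) shows $C$ is contractible with $\partial C=M$. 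Your converse direction for (2), deleting a disk from the contractible filling and checking the inclusions, is essentially right, provided you also use Poincar\'e--Lefschetz duality and simple connectivity of $M$ to handle the $M$ end.

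A smaller imprecision concerns part (4): the filling of the $2k$-dimensional homotopy sphere is $(2k+1)$-dimensional, so after framed surgery below the middle dimension the residual obstruction lives in the odd simply-connected group $P_{2k+1}=L_{2k+1}(\mathbb{Z})=0$ (as recorded in (\ref{calculated-pn})); the vanishing is not an ``even-dimensional quadratic form'' argument, and killing the middle-dimensional homology in the odd-dimensional case needs the separate torsion bookkeeping of Kervaire--Milnor rather than an Arf or signature computation. With these repairs your outline does prove the statement, by a self-contained classical route rather than the paper's citation-based argument.
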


\begin{proof}
There are topological manifolds that have not smooth structure. Furthermore, there are examples of topological manifolds that have smooth structure everywhere except a single point. If a neighborhood of that point is removed, the smooth boundary is a homotopy sphere. Any smooth manifold may be {\em triangulated}, i.e. admits a PL structure, and the underlying PL manifold is unique up to a PL isomorphism.\footnote{A topological manifold $M$ is piecewise linear, i.e., admits a PL structure, if there exists an atlas $\{U_\alpha,\varphi_\alpha\}$ such that the composities $\varphi_\alpha\circ\varphi^{-1}_{\alpha'}$, are piecewise linear. Then there is a polyehdron $P\subset \mathbb{R}^s$, for some $s$ and a homeomorphism $\phi:P\thickapprox M$, ({\em triangulation}), such that each composite $\varphi_\alpha\circ\phi$ is piecewise linear.} The vice versa is false. No all topological or PL manifolds have at least one smooth structure.

The h-cobordism classes $\Theta_n$ of $n$-dimensional homotopy spheres are trivial for $1\le n\le 6$, i.e. $\Theta_n\cong0$, and $[S^n]=0$. For $n=1,2$ this follows from the fact that each of such topological manifolds have a unique smooth structure uniquely determined by its homology. For $n=3$ this follows from the proof of the Poincar\'e conjecture (see \cite{PRA14}). Furthermore, each topological $3$-manifold has an unique differential structure \cite{ MOISE1, MUNKRES, WHITEHEAD}. Therefore, since from the proof of the Poincar\'e conjecture it follows that all $3$-homotopy spheres are homeomorphic to $S^3$, it necessarily follows that all $3$-homotopy spheres are diffeomorphic to $S^3$ too. Furthermore, for $n=4$, the triviality of $\Theta_4$ follows from the works by Freedman \cite{FREEDMAN}. (See also J. Cerf \cite{CERF}.)
\begin{lemma}[Freedman's theorem]\cite{FREEDMAN}
Two closed simply connected $4$-manifolds are homeomorphic iff they have the same symmetric bilinear form $\sigma:H^2(M;\mathbb{Z})\otimes H^2(M;\mathbb{Z})\to H^4(M;\mathbb{Z})\cong\mathbb{Z}$, (with determinant $\pm 1$, induced by the cup product), and the same {\em Kirby-Siebermann invariant} $\kappa$.\footnote{$\kappa$ is $\mathbb{Z}_2$-valued and vanishes iff the product manifold $M\times\mathbb{R}$ can be given a differentiable structure.} Any $\sigma$ can be realized by such a manifold. If $\sigma(x\otimes x)$ is odd for some $x\in H^2(M;\mathbb{Z})$, then either value of $\kappa$ can be realized also. However, if $\sigma(x\otimes x)$ is always even, then $\kappa$ is determined by $\sigma$, being congruent to $\frac{1}{8}\sigma$.

In particular, if $M$ is homotopy sphere, then $H^2(M,\mathbb{Z})=0$ and $\kappa\equiv 0$, so $M$ is homeomorphic to $S^4$.\footnote{It is not known which $4$-manifolds with $\kappa=0$ actually possess differentiable structure, and it is not known when this structure is essentially unique.}
\end{lemma}

The cases $n=5,6$, can be proved by using surgery theory and depend by the Smale's h-cobordism theorem.\footnote{There exists also a s-cobordism version of such a theorem for non-simply connected manifolds. More precisely, an $(n+1)$-dimensional h-cobordism $(V;N,M)$ with $n\ge 5$, is trivial iff it is an $s$-cobordism. This means that for $n\ge 5$ h-cobordant $n$-dimensional manifolds are diffeomorphic iff they are s-cobordant. Since the Whitehead group of the trivial group is trivial, i.e., $Wh(\{1\})=0$, it follows that h-cobordism theorem is the simply-connected special case of the s-cobordism.}

\begin{lemma}[Smale's h-cobordism theorem]\cite{SMALE1}
Any $n$-dimensional simply connected h-cobordism $W$, $n>5$, with $\partial W=M\sqcup(-N)$, is diffeomorphic to $M\times[1,0]$. (All manifolds are considered smooth and oriented. $-N$ denotes the manifold $N$ with reversed orientation.)\footnote{The proof utilizes Morse theory and the fact that for an h-cobordism $H_\bullet(W,M)\cong H_\bullet(W,N)\cong 0$, gives $W\cong M\times[0,1]$. The motivation to work with dimensions $n\ge 5$ is in the fact that it is used the {\em Whitney embedding theorem} that states that a map $f:N\to M$, between manifolds of dimension $n$ and $m$ respectively, such that either $2n+1\le m$ or $m=2n\ge 6$ and $\pi_1(M)=\{1\}$, is homotopic to an embedding.}

If $n\ge 5$ any two homotopy $n$-sphere are PL homeomorphic, and diffeomorphic too except perhaps at a single point. (If $n=5$, (resp. $n=6$), then any homotopy $n$-sphere $\Sigma$ bounds a contractible $6$-manifold, (resp. $7$-manifold), and is diffeomorphic to $S^5$, (resp. $S^6$). Every smooth manifold $M$ of dimension $n>4$, having the homotopy of a sphere is a {\em twisted sphere}, i.e., $M$ can be obtained by taking two disks $D^n$ and gluing them together by a diffeomorphism $f:S^{n-1}\cong S^{n-1}$ of their boundaries. More precisely one has the isomorphism $\pi_0(Diff_+(S^n))\cong\Theta_{n+1}$, $[f]\mapsto \Sigma_f\equiv D^{n+1}\bigcup_f(-D^{n+1})$, where $\pi_0(Diff_+(S^n))$ denotes the group of isotopy classes of oriented preserving diffeomorphisms of $S^n$.
\end{lemma}
See the paper by M. A. Kervaire and J. W. Milnor \cite{KERVAIRE-MILNOR} and the following ones by S. Smale \cite{SMALE1, SMALE2}. In the following we shall give a short summary of this proof for $n\ge 5$. This is really an application of the Browder-Novikov theorem.

\begin{lemma}\label{gamma-group-a}
Let $\Xi_n$ denote the set of smooth $n$-dimensional manifolds homeomorphic to $S^n$. Let $\sim_d$ denote the equivalence relation in $\Xi_n$ induced by diffeomorphic manifolds. Put $\Gamma_n\equiv \Xi_n/\sim_d$.\footnote{$\Gamma_n$ can be identified with the set of {\em twisted $n$-spheres} up to orientation-preserving diffeomorphisms, for $n\not= 4$. One has the exact sequences given in (\ref{exact-sequences-twisted-spheres}).
\begin{equation}\label{exact-sequences-twisted-spheres}
    \xymatrix{\pi_0(Diff^+(D^n))\ar[r]& \pi_0(Diff^+(S^{n-1}))\ar[r]&\Gamma_n\ar[r]&0}
\end{equation}
If the used diffeomorphism $S^{n-1}\to S^{n-1}$ to obtain a twisted $n$-sphere by gluing the corresponding boundaries of two disks $D^n$, is not smoothly isotopic to the identity, one obtains an exotic $n$-sphere. For $n>4$ every exotic $n$-sphere is a twisted sphere. For $n=4$, instead, twisted spheres are standard ones \cite{CERF}.} Then the operation of connected sum makes $ \Gamma_n$ an abelian group for $n\ge 1$.
\end{lemma}
\begin{proof}
Let us first remark that since we are working in $\Xi_n$, the operation of connected sum there must be considered in smooth sense. Then it is easy to see that the for $M_1, M_2,M_3\in \Xi_n$, one has $M_1\sharp M_2\cong M_2\sharp M_1$ and that $(M_1\sharp M_2)\sharp M_3\cong M_1\sharp( M_2\sharp M_3)$. Therefore, it is well defined the commutative and associative composition map $+: \Gamma_n\times \Gamma_n\to\Gamma_n$, $[M_1]+[M_2]=[M_1\sharp M_2]$. The zero of this composition is the equivalence class $[S^n]\in \Gamma_n$. In fact, since $\overline{S^n\setminus D^n}\bigcup_{S^{n-1}}(S^{n-1}\times I)\cong D^n$, we get $M\sharp S^n\cong \overline{M\setminus D^n}\bigcup_{S^{n-1}}(\overline{S^n\setminus D^n}\bigcup_{S^{n-1}}(S^{n-1}\times I))\cong\overline{M\setminus D^n}\bigcup_{S^{n-1}}D^n\cong M$. Therefore, $[S^n]=0\in\Gamma_n$. Furthermore, each element $M\in\Xi_n$ admits, up to diffeomorphisms, an unique opposite $M'\in\Xi_n$. In fact, since $\overline{M\setminus D^n}\cong D^n$, it follows that $M\cong D^n\bigcup_{\lambda}D^n\cong D^n\bigcup_{S^{n-1}}D^n$, where $\lambda:S^{n-1}\to S^{n-1}$ is a given diffeomorphism that identifies the two copies of $S^{n-1}$. Then $M'$ is defined by $M'=D^n\bigcup_{\lambda^{-1}}D^n$. In fact one has $M\sharp M'\cong (D^n\bigcup_{\lambda}D^n)\sharp(D^n\bigcup_{\lambda^{-1}}D^n)\cong D^n\bigcup_{1}D^n\cong S^n$, where $1=\lambda\circ\lambda^{-1}:S^{n-1}\to S^{n-1}$. (See Fig. \ref{connected-sum-properties}.)
\end{proof}
\begin{lemma}\label{gamma-group-b}
One has the group isomorphisms $\Gamma_n\cong\Theta_n$ for any $n\ge 1$ and $n\not= 4$. So these groups classify all possible differentiable structures on $S^n$, up to orientation preserving diffeomorphisms, in all dimension $n\not=4$.
\end{lemma}
\end{proof}
\begin{remark}[Strange phenomena on dimension four]
It is well known that on $\mathbb{R}^4$ there are uncountably many inequivalent differentiable structures, i.e., one has {\em exotic $\mathbb{R}^4$}, say $\widetilde{\mathbb{R}^4}$. (This is a result by M. H. Freedman \cite{FREEDMAN}, starting from some results by S. K. Donaldson \cite{DONALDSON}.) On the other hand by the fact that $\Gamma_4\cong\Theta_4=0$ it follows for any $4$-dimensional homotopy sphere $\Sigma\cong S^4$. So taking in to account that $S^4\setminus\{pt\}\cong \mathbb{R}^4$, it natural arises the question: {\em Do exotic $4$-sphere $\widetilde{\Sigma}$ exist such that $\widetilde{\Sigma}\setminus\{pt\}\cong \widetilde{\mathbb{R}^4}$\hskip 2pt?}
The answer to this question, conjecturing the isomorphism $\Gamma_4\cong\Theta_4=0$, should be in the negative. This means that all exotic  $\widetilde{\mathbb{R}^4}$ collapse on the unique one $S^4$ by the process of one point compactification ! (However this is generally considered an open problem in geometric topology and called the {\em smooth Poincar\'e conjecture}. (See, e.g., \cite{FREEDMAN-GOMPF-MORRISON-WALKER}.)
\end{remark}

\begin{lemma}[M. Hirsch and J. Munkres]\label{hirsch-munkres-lemma}\cite{HIRSCH1, MUNKRES}
The obstructions to the existence of a smooth structure on a $n$-dimensional combinatorial (or PL) manifold lie in the groups $H^{k+1}(M;\Gamma_k)$; while the obstruction to the uniqueness of such a smooth structures, when it exists, are elements of $H^k(M;\Gamma_k)$.
\end{lemma}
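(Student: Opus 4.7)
The plan is to apply classical obstruction theory to the problem of constructing a smooth structure on $M$ compatible with the given PL structure, working cell-by-cell through a fixed PL triangulation $T$ of $M$. The $0$-skeleton $M^{(0)}$ admits a canonical smoothing, and I proceed inductively: assuming a compatible smooth structure has been built on the $k$-skeleton $M^{(k)}$, I ask whether it extends over each $(k+1)$-simplex $\sigma^{k+1}$. The boundary $\partial\sigma^{k+1}$ is PL-homeomorphic to $S^k$ and inherits a smooth structure from the surrounding $M^{(k)}$; the extension across the disk $\sigma^{k+1}\cong D^{k+1}$ exists precisely when this smoothed $k$-sphere is diffeomorphic to the standard $S^k$, i.e.\ when it represents $0\in\Gamma_k$.

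For existence, I define a cellular cochain $\mathfrak{o}\in C^{k+1}(M;\Gamma_k)$ by sending each oriented $(k+1)$-simplex to the class in $\Gamma_k$ of the smooth sphere induced on its boundary. The key point is that $\mathfrak{o}$ is a cocycle: inspecting the boundary of a $(k+2)$-simplex, the smooth spheres arising on its faces assemble into a smoothing of $S^{k+1}$ bounding a PL $(k+2)$-disk, and the resulting relation among their $\Gamma_k$-classes is exactly $\delta\mathfrak{o}=0$. One then verifies that the cohomology class $[\mathfrak{o}]\in H^{k+1}(M;\Gamma_k)$ is independent of the choices made at earlier stages; its vanishing is equivalent to the smoothing extending past $M^{(k+1)}$, so the total obstruction to existence is captured by classes in $H^{k+1}(M;\Gamma_k)$ as $k$ ranges.

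For uniqueness, given two compatible smoothings $\alpha,\beta$ agreeing on $M^{(k-1)}$, their comparison over each $k$-simplex $\tau^k$ yields an element of $\Gamma_k$: glue the two smoothed disks along the common PL boundary by the identity to produce a twisted $k$-sphere, and take its class under the identification in Lemma \ref{gamma-group-b}. This defines a difference cochain $\mathfrak{d}(\alpha,\beta)\in C^k(M;\Gamma_k)$, and standard manipulations show it is a cocycle whose class in $H^k(M;\Gamma_k)$ vanishes if and only if $\alpha$ can be isotoped to agree with $\beta$ on $M^{(k)}$. Iterating the obstruction gives the uniqueness statement.

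The main obstacle is not the cell-by-cell bookkeeping but the technical passage from simplex-by-simplex smoothings to genuine $C^\infty$ atlases on open stars: one must ensure that smoothings chosen on neighboring simplices glue along their common faces. This requires the uniqueness up to isotopy of collar neighborhoods of smooth boundaries, together with the fact that isotopic PL-homeomorphisms $S^k\cong S^k$ produce diffeomorphic twisted $(k+1)$-spheres (i.e.\ the quotient description $\pi_0\mathrm{Diff}^+(S^k)\to\Gamma_{k+1}$ already used in Lemma \ref{gamma-group-a}). Once these ingredients are in place, both the cocycle condition and the invariance of the cohomology class under admissible choices follow, and the two obstruction sequences of the statement are established.
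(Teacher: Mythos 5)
The paper does not actually prove this lemma: it is quoted from Hirsch and Munkres with a citation, so your attempt has to be judged against the classical smoothing-theory argument rather than against anything in the text. Your outline has the right overall shape --- an obstruction cochain with coefficients in $\Gamma_k$ for existence and a difference cochain for uniqueness --- but the steps that carry essentially all of the content are asserted rather than proved.

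First, a simplicial $k$-skeleton is not a manifold, so ``a compatible smooth structure on $M^{(k)}$'' does not literally make sense as the inductive hypothesis; the actual arguments of Munkres and Hirsch smooth open neighborhoods of skeleta (equivalently, proceed handle-by-handle through a dual decomposition), and the inductive step then depends on the collar and product smoothing theorems (Cairns--Hirsch) and on concordance-extension results to make ``the smoothing induced on $\partial\sigma^{k+1}$'' well defined up to the right equivalence --- your closing paragraph gestures at this but does not supply it. Second, your extension criterion identifies the obstruction over a $(k+1)$-cell with the diffeomorphism class in $\Gamma_k$ of the smoothed boundary sphere; what the extension problem actually measures is the concordance class of that smoothing of $S^k$ relative to the structure already fixed near the cell, and the statement that concordance classes of smoothings of $S^k$ can be identified with $\Gamma_k$ (i.e.\ that concordance implies isotopy, and that isotopy classes of smoothings of a sphere are classified by the twisted-sphere group used in Lemma \ref{gamma-group-a} and Lemma \ref{gamma-group-b}) is itself one of the principal theorems of the Hirsch--Munkres theory, not an input available for free inside the induction. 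The cocycle condition and the independence of $[\mathfrak{o}]$ from earlier choices rest on the same suppressed machinery. So the plan is the correct classical one in outline, but as written the argument presupposes the technical core of the result it is meant to establish.
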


\begin{lemma}[Kirby and Siebenman]\label{kirby-siebenman-lemma}\cite{KIRBY-SIEBENMAN}
For a topological manifold $M$ of dimension $n\ge 5$, there is only one obstruction to existence of a PL-structure, living in $H^4(M;\mathbb{Z}_2)$, and only one obstruction to the uniqueness of this structure (when it exists), living in $H^3(M;\mathbb{Z}_2)$.\footnote{This result by Kirby and Siebenman does not exclude that every manifold of dimension $n>4$ can possess some triangulation, even if it cannot be PL-homeomorphic to Euclidean space.}
\end{lemma}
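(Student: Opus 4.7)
The plan is to recast the existence and uniqueness of PL structures on a topological manifold $M$ (with $\dim M=n\ge 5$) as a lifting problem in classifying space theory, and then to reduce everything to computing the homotopy type of the homotopy fiber $TOP/PL$ of the forgetful map $BPL\to BTOP$. The stable tangent microbundle of $M$ is classified by a map $\tau_M:M\to BTOP$, and a PL structure on $M$ corresponds (up to concordance) to a lift of $\tau_M$ through $BPL\to BTOP$. By standard obstruction theory along the Postnikov tower of the fibration $TOP/PL\to BPL\to BTOP$, the obstructions to existence of such a lift lie in $H^{k+1}(M;\pi_k(TOP/PL))$, and the obstructions to uniqueness (i.e.\ to a homotopy between two given lifts) lie in $H^k(M;\pi_k(TOP/PL))$.

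First I would set up this lifting framework carefully in the stable range, justifying the passage between PL structures on $M$ itself and lifts of the classifying map; here the hypothesis $n\ge 5$ is what makes the stabilization harmless, since handle-smoothing and product-structure arguments (using that the handlebody dimension is comfortably above $4$) allow one to go from a homotopy-theoretic lift back to an honest PL structure. I would also record that two PL structures giving concordant lifts are themselves concordant, and that in dimension $\ge 5$ concordance implies isotopy, so that the cohomological counting corresponds to actual equivalence classes of PL structures.

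Next, and this is the true heart of the matter, I would compute $\pi_*(TOP/PL)$. The plan is to run Kirby's \emph{torus trick}: given a homeomorphism of a neighborhood of the origin in $\mathbb{R}^n$ to an open set, pull back via a finite cover of the torus $T^n$ to produce a homeomorphism of $T^n$ close to the identity, and then use surgery on $T^n$ (where the surgery obstructions are controlled by known computations of $L$-groups and normal invariants $[T^n, G/PL]$) to deform it to a PL homeomorphism. The upshot is that $\pi_k(TOP/PL)=0$ for all $k\ne 3$ and $\pi_3(TOP/PL)=\mathbb{Z}_2$, so $TOP/PL$ has the homotopy type of the Eilenberg--MacLane space $K(\mathbb{Z}_2,3)$. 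I expect this torus-trick surgery computation to be the main obstacle, since it is the only nonformal input; everything else is obstruction-theoretic bookkeeping.

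With $TOP/PL\simeq K(\mathbb{Z}_2,3)$ in hand, the Postnikov obstruction theory collapses to a single stage: the sole existence obstruction lives in $H^{3+1}(M;\pi_3(TOP/PL))=H^4(M;\mathbb{Z}_2)$, and the sole uniqueness obstruction lives in $H^3(M;\pi_3(TOP/PL))=H^3(M;\mathbb{Z}_2)$. Finally I would cross-check by verifying that the existence class is natural (it is the Kirby--Siebenmann class $\kappa(M)$ that already appears in Freedman's classification of $4$-manifolds via $M\times\mathbb{R}$) and that the triangulability statement of the footnote is consistent: vanishing of $\kappa(M)$ gives a PL structure, but the PL manifold need not be PL-homeomorphic to $\mathbb{R}^n$, since that would require additional vanishing in Whitehead-group or concordance obstructions not captured by $H^4(-;\mathbb{Z}_2)$ alone.
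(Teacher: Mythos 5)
Your proposal is essentially correct as an outline of the genuine Kirby--Siebenmann argument, but it takes a completely different route from the paper for the simple reason that the paper does not prove this lemma at all: it is stated as a known external result and supported only by the citation to Kirby and Siebenmann, on the same footing as the Freedman and Hirsch--Munkres lemmas quoted nearby. What you reconstruct is the standard smoothing-theory formulation: PL structures up to concordance correspond to lifts of the topological tangent (micro)bundle map $\tau_M:M\to BTOP$ through $BPL\to BTOP$, obstruction theory along the fibration with fiber $TOP/PL$ places existence obstructions in $H^{k+1}(M;\pi_k(TOP/PL))$ and uniqueness obstructions in $H^k(M;\pi_k(TOP/PL))$, and the single nonformal input is $TOP/PL\simeq K(\mathbb{Z}_2,3)$, obtained from the torus trick together with the surgery classification of homotopy tori; the hypothesis $n\ge 5$ enters through the Product Structure Theorem and handle-straightening, which convert homotopy-theoretic lifts into actual PL structures and concordance into isotopy. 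This is the right skeleton, and you correctly identify the computation of $\pi_*(TOP/PL)$ as the genuinely hard step --- though be aware that, as written, that step (the classification of fake tori via $[T^n,G/PL]$ and the relevant $L$-groups, due to Wall and Hsiang--Shaneson, plus the passage from ``homeomorphism close to the identity'' to a bounded/immersion-theoretic setting) is asserted rather than carried out, so your text is a proof plan rather than a proof; the paper avoids the issue entirely by treating the lemma as a black box. One small correction: your reading of the footnote is slightly off. The footnote concerns the distinction between PL structures and arbitrary simplicial triangulations --- a manifold with nonvanishing Kirby--Siebenmann class admits no PL structure, yet might still admit a non-combinatorial triangulation whose vertex links are not PL spheres --- and has nothing to do with Whitehead-group obstructions or with the manifold being PL-homeomorphic to Euclidean space globally.
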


\begin{figure}[h]
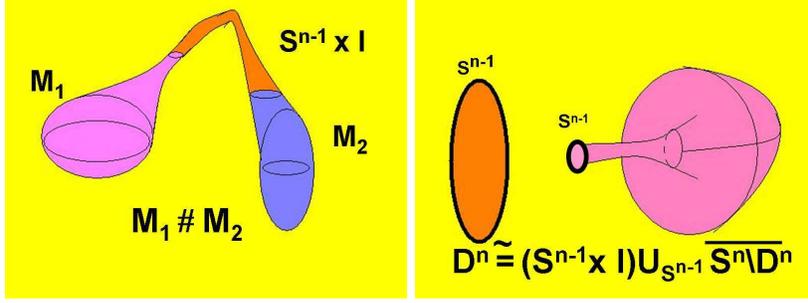

\centering
\centerline{\includegraphics[height=4cm]{connected-sum-a.eps} \includegraphics[height=4cm]{connected-sum-b.eps}}
\caption{Connected sum properties.}
\label{connected-sum-properties}
\end{figure}

\begin{definition}[Intersection form and signature of manifold]
{\em 1)} The {\em intersection form} of a $2n$-dimensional topological manifold with boundary $(M,\partial M)$ is the $(-1)^n$-symmetric form $\lambda$ over the $\mathbb{Z}$-module $H\equiv H^n(M,\partial)/torsion$, $\lambda(x,y)=<x\cup y,[M]>\in \mathbb{Z}$.\footnote{Let $R$ be a commutative ring and $H$ a finite generated free $R$-module. A {\em $\epsilon$-symmetric form over $H$} is a bilinear mapping $\lambda:H\times H\to R$, such that $\lambda(x,y)=\epsilon\lambda(y,x)$, with $\epsilon\in\{+1,-1\}$. The form $\lambda$ is {\em nonsingular} if the $R$-module morphism $H\to H^*\equiv Hom_{R}(H;R)$, $x\mapsto(y\mapsto\lambda(x,y))$ is an isomorphism. A {\em$\epsilon$-quadratic form} associated to a $\epsilon$-symmetric form $\lambda$ over $H$, is a function
$\mu:H\to Q_\epsilon(R)\equiv\COKER(1-\epsilon:R\to R)$, such that: (i) $\lambda(x,y)=\mu(x+y)-\mu(x)-\mu(y)$; (ii)  $\lambda(x,x)=(1+\epsilon)\mu(x)\in \IM(1+\epsilon:R\to R)\subseteq\ker(1-\epsilon:R\to R)$, $\forall x,y\in H$, $a\in R$.
If $R=\mathbb{Z}$ and $\epsilon=1$, we say {\em signature of $\lambda$}, $\sigma(\lambda)=p-q\in\mathbb{Z}$, where $p$ and $q$ are respectively the number of positive and negative eigenvalues of the extended form on $\mathbb{R}\bigotimes_{\mathbb{Z}}H$. Then $\lambda$ has a $1$-quadratic function $\mu:H\to Q_{+1}(\mathbb{Z})$ iff $\lambda$ has even diagonal entries, i.e., $\lambda(x,x)\equiv 0\: (\MOD\, 2)$, with $\mu(x)=\lambda(x,x)/2$,
$\forall x\in H$. If $\lambda$ is nonsingular then $\sigma(\lambda)\equiv 0\: (\MOD\, 8)$. Examples. 1) $R=H=\mathbb{Z}$, $\lambda=1$, $\sigma(\lambda)=1$.

2) $R=\mathbb{Z}$, $H=\mathbb{Z}^8$, $\lambda=E^8$-form,
given by
$\left(\lambda_{ij}\right)=\scalebox{0.6}{$\left(\begin{array}{cc}
a_{rs}&b_{rs}\\
 c_{rs}&d_{rs}\\
 \end{array}\right)$}$ with
     $(a_{rs})=\scalebox{0.4}{$\left(\begin{array}{cccc}
  2 & 1 & 0 & 0 \\
  1 & 2 & 1 & 0 \\
  0 & 1 & 2 & 1 \\
  0 & 0 & 1 & 2 \\
  \end{array}\right)$}$,     $(b_{rs})=\scalebox{0.4}{$\left(\begin{array}{cccc}
  0 & 0 & 0 & 0 \\
  0 & 0 & 0 & 0 \\
  0 & 0 & 0 & 0 \\
  1 & 0 & 0 & 0 \\
  \end{array}\right)$}$, $(c_{rs})=\scalebox{0.4}{$\left(\begin{array}{cccc}
  0 & 0 & 0 & 1 \\
  0 & 0 & 0 & 0 \\
  0 & 0 & 0 & 0 \\
  0 & 0 & 0 & 0 \\
  \end{array}\right)$}$, $(d_{rs})=\scalebox{0.4}{$\left(\begin{array}{cccc}
  2 & 1 & 0 & 1 \\
  1 & 2 & 1 & 0 \\
  0 & 1 & 2 & 1 \\
  1 & 0 & 1 & 2 \\
  \end{array}\right)$}$.}

{\em 2)(Milnor's plumbing theorem.)} For $n\ge 3$ every $(-1)^n$-quadratic form $(H,\lambda,\mu)$ over $\mathbb{Z}$ is realized by an $(n-1)$-connected $2n$-dimensional framed manifold with boundary $(V,\partial V)$ with $H_n(V)=H$. The form $(H,\lambda,\mu)$ is nonsingular iff $H_\bullet(\partial V)=H_\bullet(S^{2n-1})$.
Let $(H,\lambda)$ be a nonsingular $(-1)$-symmetric form over $\mathbb{Z}$ and $\{b_j,c_j\}_{1\le j\le p}$ a basis for the $\mathbb{Z}$-module $H$, such that $\lambda(b_r,b_s)=0$, $\lambda(c_r,c_s)=0$, $\lambda(b_r,c_s)=0$, for $r\not=s$, and $\lambda(b_r,c_r)=1$. Let $\mu:H\to Q_{-1}(\mathbb{Z})=\mathbb{Z}_2$ be a $(-1)$-quadratic function associated to $(H,\lambda)$. Then the {\em Arf invariant} of a nonsingular $(-1)$-quadratic form $(H,\lambda,\mu)$ over $\mathbb{Z}$ is $Arf(H,\lambda,\mu)=\sum_{1\le j\le p}\mu(b_j)\mu(c_j)\in\mathbb{Z}_2\equiv\{0,1\}$.

If $\partial M=\varnothing$, or $H_\bullet(\partial M)=H_\bullet(S^{2n-1})$, then $\lambda$ is nonsingular.

{\em 3)} The {\em signature} $\sigma(M)$ of a $4k$-dimensional manifold $(M,\partial M)$ is $\sigma(M)=\sigma(\lambda)\in\mathbb{Z}$, where $\lambda$ is the symmetric form over the $\mathbb{Z}$-module $H_{2k}(M)/torsion$.

{\em 4)} Let $M$ be an oriented manifold with empty boundary, $\partial M=\varnothing$, $\dim M=n=4k$. Then $H^i(M;\mathbb{Z})$ is finitely generated for each $i$ and $H^{2k}(M;\mathbb{Z})\cong\mathbb{Z}^s\bigoplus Tor$, where $Tor$ is the torsion subgroup. Let $[M]\in H_{4k}(M;\mathbb{Z})$ be the orientation class of $M$. Let $<,>:H^{2k}(M;\mathbb{Z})\times H^{2k}(M;\mathbb{Z})\to\mathbb{Z}$, be the symmetric bilinear form given by $(a,b)\mapsto<a,b>=<a\cup b,[M]>\in\mathbb{Z}$. This form vanishes on the torsion subgroup, hence it factors on $H^{2k}(M;\mathbb{Z})/Tor\times H^{2k}(M;\mathbb{Z})/Tor\cong \mathbb{Z}^s\times \mathbb{Z}^s\to\mathbb{Z}$. This means that the adjoint map $\phi: H^{2k}(M;\mathbb{Z})/Tor\to Hom_{\mathbb{Z}}(H^{2k}(M;\mathbb{Z})/Tor;\mathbb{Z})=(H^{2k}(M;\mathbb{Z})/Tor))^*$, $a\mapsto \phi(a)(b)=<a,b>$, is an isomorphism. This is a just a direct consequence of Poincar\'e duality: $H_{2k}(M;\mathbb{Z})/Tor=(H^{2k}(M;\mathbb{Z})/Tor)^*$ and $a(b\cap[M])=<a\cup b,[M]>$. Then the signature of this bilinear form is the usual signature of the form after tensoring with the rationals $\mathbb{Q}$, i.e., of the symmetrix matrix associated to the form, after choosing a basis for $\mathbb{Q}^s$. Hence the {\em signature} is the difference between the number of $+1$ eigenvalues with the number of $-1$ eigenvalues of such a matrix. Let us denote by $\sigma(M)$ the signature of the above nondegenerate symmetric bilinear form, and call it {\em signature} of $M$.
\end{definition}
\begin{table}[t]
\caption{Polynomials $s_I(\sigma_1,\cdots,\sigma_n)$, for $0\le n\le 4$.}
\label{polynomials-si}
\scalebox{0.9}{$\begin{tabular}{|l|l|}
  \hline
  \hfil{\rm{\footnotesize $n$}}\hfil& \hfil{\rm{\footnotesize $s_I(\sigma_1,\cdots,\sigma_n)$}}\hfil\\
 \hline
 {\rm{\footnotesize $0$}}&{\rm{\footnotesize $s=1$}}\\
 \hline
 {\rm{\footnotesize $1$}}& {\rm{\footnotesize $s_{(1)}(\sigma_1)=\sigma_1$}}\\
  \hline
{\rm{\footnotesize $2$}}&{\rm{\footnotesize $s_{(2)}(\sigma_1,\sigma_2)=\sigma_1^2-2\sigma_2$}}\\
&{\rm{\footnotesize $s_{(1,1)}(\sigma_1,\sigma_2)=\sigma_2$}}\\
\hline
{\rm{\footnotesize $3$}}&{\rm{\footnotesize $s_{(3)}(\sigma_1,\sigma_2,\sigma_3)=\sigma_1^3-3\sigma_1\sigma_2+3\sigma_3$}}\\
 &{\rm{\footnotesize $s_{(1,2)}(\sigma_1,\sigma_2,\sigma_3)=\sigma_1\sigma_2-3\sigma_3$}}\\
&{\rm{\footnotesize $s_{(1,1,1)}(\sigma_1,\sigma_2,\sigma_3)=\sigma_3$}}\\
 \hline
{\rm{\footnotesize $4$}}&{\rm{\footnotesize $s_{(4)}(\sigma_1,\sigma_2,\sigma_3,\sigma_4)=\sigma_1^4-4\sigma_1^2\sigma_2+2\sigma_2^2+4\sigma_1\sigma_3-4\sigma_4$}}\\
&{\rm{\footnotesize $s_{(1,3)}(\sigma_1,\sigma_2,\sigma_3,\sigma_4)=\sigma_1^2\sigma_2-2\sigma_2^2-\sigma_1\sigma_3+4\sigma_4$}}\\
&{\rm{\footnotesize $s_{(2,2)}(\sigma_1,\sigma_2,\sigma_3,\sigma_4)=\sigma_2^2-2\sigma_1\sigma_3+2\sigma_4$}} \\
 &{\rm{\footnotesize $s_{(1,1,2)}(\sigma_1,\sigma_2,\sigma_3,\sigma_4)=\sigma_1\sigma_3-4\sigma_4$}}\\
  &{\rm{\footnotesize $s_{(1,1,1,1)}(\sigma_1,\sigma_2,\sigma_3,\sigma_4)=\sigma_4$}} \\
\hline
\end{tabular}$}
\end{table}

\begin{theorem}[R. Thom's properties of signature]
{\em 1)} If $M$ has $\dim M=4k$, and it is a boundary, then $\sigma(M)=0$.

{\em 2)} $\sigma(-M)=-\sigma(M)$.

{\em 3)} Let $M$ and $L$ be two $4k$-dimensional closed, compact, oriented manifolds without boundary. Then we have
$\sigma(M\sqcup L)=\sigma(M)+\sigma(L)$ and $\sigma(M\times L)=\sigma(M).\sigma(L)$, where the orientation on $M\times L$ is $[M\times L]=[M]\otimes [L]$.

{\em 4) (Rohlin's signature theorem).} The signature of a closed oriented $4k$-dimensional manifold is an oriented cobordism invariant, i.e., if $\partial W=M\sqcup N$, it follows that $\sigma(M)=\sigma(N)\in\mathbb{Z}$. More precisely, the signature for oriented boundary $4k$-dimensional manifolds is zero and it defines a linear form $\sigma:{}^+\Omega_{4k}\to\mathbb{Z}$.

Furthermore, let $M$ and $N$ be $4k$-dimensional manifolds with differentiable boundaries: $\partial M=\bigcup_jX_j$, $\partial N=\bigcup_iY_i$, such that $X_1=Y_1$. Then one has the formula {\em(\ref{additivity-signature-manifolds-with-differentiable-boundaries})}.
\begin{equation}\label{additivity-signature-manifolds-with-differentiable-boundaries}
\sigma(M\bigcup_{X_1=Y_1}N)=\sigma(M)+\sigma(N).
\end{equation}

{\em 5) (Hirzebruch's signature theorem (1952)).}
The signature of a closed oriented $4k$-dimensional manifold $M$ is given by
\begin{equation}\label{hirzebruch-signature-formula}
    \sigma(M)=<\mathcal{L}_k(p_1,\cdots,p_k),[M]>\in\mathbb{Z}
\end{equation}
with $\mathcal{L}_k(p_1,\cdots,p_k)$ polynomial in the Pontrjagin classes $p_j$ of $M$, i.e., $p_j(M)\equiv p_j(TM)\in H^{4j}(M)$, representing the $\mathcal{L}$ genus, i.e., the genus of the formal power series given in {\em(\ref{formal-power-series-for-l-genus})}.\footnote{A {\em genus} for closed smooth manifolds with some $X$-structure, is a ring homomorphism $\Omega_\bullet^X\to R$, where $R$ is a ring. For example, if the $X$-structure is that of oriented manifolds, i.e., $X=SO$, then the signature of these manifolds just identifies a genus $\sigma:{}^+\Omega_\bullet=\Omega_\bullet^{SO}\to \mathbb{Z}$, such that $\sigma(1)=1$, and $\sigma:{}^+\Omega_p\to 0$ if $p\not=4q$. Therefore the genus identifies also a $\mathbb{Q}$-algebra homomorphism $\Omega_\bullet^{SO}\bigotimes_{\mathbb{Q}}\mathbb{Q}\to\mathbb{Q}$. More precisely, let $A\equiv \mathbb{Q}[t_1,t_2,\cdots]$ be a graded commutative algebra, where $t_i$ has degree $i$. Set $\mathcal{A}\equiv A[[a_0,a_1,\cdots]]$, where $a_i\in A$ is homogeneous of degree $i$, i.e., the elements of $\mathcal{A}$ are infinite formal sums $a\equiv a_0+a_1+a_2+\cdots$. Let $\mathcal{A}^\bullet\subset\mathcal{A}$ denote the subgroup of the multiplicative group of $\mathcal{A}$ of elements with leading term $1$. Let $K_1(t_1)$, $K_2(t_1,t_2)$, $K_3(t_1,t_2,t_3),\cdots\in A$, be a sequence of polynomials of $A$, where $K_n$ is homogeneous of degree $n$. For $a= a_0+a_1+a_2+\cdots\in \mathcal{A}^\bullet$, we define $K(a)\in\mathcal{A}^\bullet$ by $K(a)=1+K_1(a_1)+K_2(a_1,a_2)+\cdots$. We say that $K_n$ form a {\em multiplicative sequence} if $K(ab)=K(a)K(b)$, $\forall a,b\in\mathcal{A}^\bullet$. An example is with $K_n(t_1,\cdots,t_n)=\lambda^n t_n$, $\lambda\in\mathbb{Q}$. Another example is given by the formal power series given in (\ref{formal-power-series-for-l-genus}) with $\lambda_k=(-1)^{k-1}\frac{2^{2k}B_{2k}}{(2k)!}$. For any partition $I=(i_1,i_2,\cdots,i_k)$ of $n$, set $\lambda_I=\lambda_{i_1}\lambda_{i_2}\cdots\lambda_{i_k}$. Now define polynomials $\mathcal{L}_n(t_1,\cdots,t_n)\in A$ by $\mathcal{L}_n(t_1,\cdots,t_n)=\sum_I\lambda_Is_I(t_1,\cdots,t_n)$, where the sum is over all partitions of $n$ and $s_I$ is the unique polynomial belonging to $\mathbb{Z}[t_1,\cdots,t_n]$ such that $s_I(\sigma_1,\cdots,\sigma_n)=\sum t^I$, where $\sigma_1,\cdots,\sigma_n$ are the elementary symmetric functions that form a polynomial basis for the ring $\mathcal{S}_n$ of symmetric functions in $n$ variables. ($\mathcal{S}_n$ is the graded subring of $\mathbb{Z}[t_1,\cdots,t_n]$ of polynomials that are fixed by every permutation of the variables. Therefore we can write $\mathcal{S}_n=\mathbb{Z}[\sigma_1,\cdots,\sigma_n]$, with $\sigma_i$ of degree $i$. In Tab. \ref{polynomials-si} are reported the polynomials $s_I(\sigma_1,\cdots,\sigma_n)$, for $0\le n\le 4$.) $\mathcal{L}_n$ form a multiplicative sequence. In fact $\mathcal{L}(ab)=\sum_Is_I(ab)=\sum_I\lambda_I\sum_{I_1I_2=I}s_{I_1}(a)s_{I_2}(b)
=\sum_{I_1I_2=I}\lambda_{I_1}s_{I_1}(a)\lambda_{I_2}s_{I_2}(b)=\mathcal{L}(a)\mathcal{L}(b)$. Then for an $n$-dimensional manifold $M$ one defines {\em $\mathcal{L}$-genus}, $\mathcal{L}[M]=0$ if $n\not=4k$, and $\mathcal{L}[M]=<K_k(p_1(TM),\cdots,p_k(TM)),\mu_M>$ if $n=4k$, where $\mu_M$ is the rational fundamental class of $M$ and $K_k(p_1(TM),\cdots,p_k(TM))\in H^n(M;\mathbb{Z})$.}
\begin{equation}\label{formal-power-series-for-l-genus}
    \frac{\sqrt{z}}{\tanh(\sqrt{z})}=\sum_{k\ge0}\frac{2^{2k}B_{2k}z^k}{(2k)!}=1+\frac{z}{3}-\frac{z^2}{45}+\cdots
\end{equation}
where the numbers $B_{2k}$ are the {\em Bernoulli numbers}.\footnote{Formula (\ref{hirzebruch-signature-formula}) is a direct consequence of Thom's computation of ${}^+\Omega_\bullet\bigotimes_{\mathbb{Z}}\mathbb{Q}\cong\mathbb{Q}[y_{4k}|k\ge 1]$, with $y_{4k}=[\mathbb{C}P^{2k}]$. (${}^+\Omega_j\bigotimes_{\mathbb{Z}}\mathbb{Q}=0$ for $j\not=4k$.) In Fact, one has $p_j(\mathbb{C}P^{n})=p_j(T\mathbb{C}P^{n})=\binom{n+1}j \in H^{4j}(\mathbb{C}P^n)=\mathbb{Z}$, $0\le j\le \frac{n}{2}$.
For $n=2k$ the evaluation $<\mathcal{L}_k,[\mathbb{C}P^{2k}]>=1\in\mathbb{Z}$ coincides with the signature of $\mathbb{C}P^{2k}$: $\sigma(\mathbb{C}P^{2k})=\sigma(H^{2k}(\mathbb{C}P^{2k}),\lambda)=\sigma(\mathbb{Z},1)=1\in\mathbb{Z}$. Therefore, the signature identifies a $\mathbb{Q}$-algebra homomorphism $\Omega_\bullet^{SO}\bigotimes_{\mathbb{Z}}\mathbb{Q}\to\mathbb{Q}$. So the Hirzebruch signature theorem states that this last homomorhism induced by the signature, coincides with the one induced by the genus. In Tab. \ref{examples-hirzebruch-polynomials} are reported some Hirzebruch's polynomials for $\mathbb{C}P^{2k}$. In Tab. \ref{bernoulli-numbers} are reported also the Bernoulli numbers $B_n$, with the Kronecker's formula, and explicitly calculated  for $0\le n\le 18$.}

\begin{table}[t]
\caption{$\mathcal{L}_k$-polynomials for $\mathbb{C}P^{2k}$, with $k=1,2,3,4$ and related objects.}
\label{examples-hirzebruch-polynomials}
\scalebox{0.6}{$\begin{tabular}{|l|l|l|l|}
  \hline
  \hfil{\rm{\footnotesize $k$}}\hfil& \hfil{\rm{\footnotesize $\mathcal{L}_k$}}\hfil&\hfil{\rm{\footnotesize $p_j(\mathbb{C}P^{2k})$}}\hfil& \hfil{\rm{\footnotesize $\sigma(\mathbb{C}P^{2k})$}}\hfil\\
 \hline
 {\rm{\footnotesize $1$}}&{\rm{\footnotesize $\mathcal{L}_1(p_1)=\frac{1}{3}$}}&{\rm{\footnotesize $p_1(\mathbb{C}P^{2})=3$}}&{\rm{\footnotesize $\sigma(\mathbb{C}P^{2})=\frac{1}{3}\times 3=1$}}\\
 \hline
 {\rm{\footnotesize $2$}}& {\rm{\footnotesize $\mathcal{L}_2(p_1,p_2)=\frac{1}{45}(7p_2-p_1^2)$}} &{\rm{\footnotesize $p_1(\mathbb{C}P^{4})=5$}}&{\rm{\footnotesize $\sigma(\mathbb{C}P^{4})=\frac{1}{45}(7\times 10-25)=1$}}\\
&&{\rm{\footnotesize $p_2(\mathbb{C}P^{4})=10$}}&\\
\hline
 {\rm{\footnotesize $3$}}&{\rm{\footnotesize $\mathcal{L}_3(p_1,p_2,p_3)=\frac{1}{945}(62p_3-13p_2p_1+2p_1^3)$}}&{\rm{\footnotesize $p_1(\mathbb{C}P^{6})=7$}}&{\rm{\footnotesize $\sigma(\mathbb{C}P^{6})=\frac{1}{945}(62\times 35-13\times 21\times 7+2\times 343)=1$}} \\
 &&{\rm{\footnotesize $p_2(\mathbb{C}P^{6})=21$}}&\\
 &&{\rm{\footnotesize $p_3(\mathbb{C}P^{6})=35$}}& \\
 \hline
{\rm{\footnotesize $4$}}&{\rm{\footnotesize $\mathcal{L}_3(p_1,p_2,p_3,p_4)=\frac{1}{14175}(381p_4-71p_3p_1-19p_2^2+22p_2p_1^2-3p_1^4)$}}&
{\rm{\footnotesize $p_1(\mathbb{C}P^{8})=9$}}&{\rm{\footnotesize $\sigma(\mathbb{C}P^{8})=\frac{1}{14175}(48006-53676-24624+64152-19683)=1$}} \\
 &&{\rm{\footnotesize $p_2(\mathbb{C}P^{8})=36$}}& \\
  &&{\rm{\footnotesize $p_3(\mathbb{C}P^{8})=84$}}& \\
   &&{\rm{\footnotesize $p_4(\mathbb{C}P^{8})=126$}}& \\
 \hline
\end{tabular}$}
\end{table}

\begin{table}[t]
\caption{\rm{\footnotesize Bernoulli numbers $B_n=-\sum_{1\le k\le n+1}\frac{(-1)^k}{k}\binom{n+1}k \sum_{1\le j\le k}j^n\in\mathbb{Q}$, $n\ge 0$.}}
\label{bernoulli-numbers}
\scalebox{0.7}{$\begin{tabular}{|l|l|l|}
  \hline
\hfil{\rm{\footnotesize $n$}}\hfil&\hfil{\rm{\footnotesize Numerator}}\hfil&\hfil{\rm{\footnotesize Denominator}}\hfil\\
 \hline
\hfil{\rm{\footnotesize $0$}}\hfil&\hfil{\rm{\footnotesize $1$}}\hfil&\hfil{\rm{\footnotesize $1$}}\hfil\\
\hfil{\rm{\footnotesize $1$}}\hfil&\hfil{\rm{\footnotesize $-1$}}\hfil&\hfil{\rm{\footnotesize $2$}}\hfil\\
\hfil{\rm{\footnotesize $2$}}\hfil&\hfil{\rm{\footnotesize $1$}}\hfil&\hfil{\rm{\footnotesize $6$}}\hfil\\
\hfil{\rm{\footnotesize $4$}}\hfil&\hfil{\rm{\footnotesize $-1$}}\hfil&\hfil{\rm{\footnotesize $30$}}\hfil\\
\hfil{\rm{\footnotesize $6$}}\hfil&\hfil{\rm{\footnotesize $1$}}\hfil&\hfil{\rm{\footnotesize $42$}}\hfil\\
\hfil{\rm{\footnotesize $8$}}\hfil&\hfil{\rm{\footnotesize $-1$}}\hfil&\hfil{\rm{\footnotesize $30$}}\hfil\\
\hfil{\rm{\footnotesize $10$}}\hfil&\hfil{\rm{\footnotesize $5$}}\hfil&\hfil{\rm{\footnotesize $66$}}\hfil\\
\hfil{\rm{\footnotesize $12$}}\hfil&\hfil{\rm{\footnotesize $-691$}}\hfil&\hfil{\rm{\footnotesize $2730$}}\hfil\\
\hfil{\rm{\footnotesize $14$}}\hfil&\hfil{\rm{\footnotesize $7$}}\hfil&\hfil{\rm{\footnotesize $6$}}\hfil\\
\hfil{\rm{\footnotesize $16$}}\hfil&\hfil{\rm{\footnotesize $-3617$}}\hfil&\hfil{\rm{\footnotesize $510$}}\hfil\\
\hfil{\rm{\footnotesize $18$}}\hfil&\hfil{\rm{\footnotesize $43862$}}\hfil&\hfil{\rm{\footnotesize $798$}}\hfil\\
\hline
\multicolumn{3}{l}{\rm{\footnotesize  $B_n=0$, $n$= odd $>1$.}}\\
  \end{tabular}$}
\end{table}

{\em 6)} The intersection form of a $4k$-dimensional manifold $M$ has a $1$-quadratic function iff it has $2k^{th}$ Wu class $v_{2k}(M)=0\in H^{2k}(M;\mathbb{Z}_2)$, in which case $\sigma(M)\equiv 0\: (\MOD\, 8)$.

\end{theorem}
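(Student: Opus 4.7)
My plan is to verify each of the six parts in turn, grouping them by the underlying technical tool. For part 1, I would observe that if $M^{4k}=\partial W$, then Poincaré-Lefschetz duality on $(W,M)$ shows that the image of $H^{2k}(W;\mathbb{Q})\to H^{2k}(M;\mathbb{Q})$ is a Lagrangian (half-dimensional, self-annihilating) subspace for the cup-product form on $M$, which forces $\sigma(M)=0$. Part 2 is immediate: reversing orientation replaces $[M]$ by $-[M]$, hence replaces $\lambda$ by $-\lambda$ and flips the signs of all eigenvalues. For part 3, the disjoint-union formula follows because $H^{2k}(M\sqcup L)=H^{2k}(M)\oplus H^{2k}(L)$ with the intersection form splitting as a direct sum, and the product formula follows by Künneth together with the standard signature-of-tensor-product identity $\sigma(V\otimes W)=\sigma(V)\sigma(W)$ for $(-1)^{2k}$-symmetric forms over $\mathbb{Q}$.

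For part 4 (Rohlin), the cobordism invariance $\sigma(M)=\sigma(N)$ whenever $\partial W=M\sqcup(-N)$ is an immediate consequence of parts 1, 2, and 3 applied to $\partial W$. This makes $\sigma:{}^+\Omega_{4k}\to\mathbb{Z}$ a well-defined additive homomorphism. For the more delicate Novikov-type additivity formula (\ref{additivity-signature-manifolds-with-differentiable-boundaries}) when two $4k$-manifolds are glued along a common boundary component, I would use the Mayer-Vietoris sequence for $M\cup_{X_1=Y_1}N$ together with the half-dimensional kernel argument from part 1 applied to the pieces whose boundary contributions cancel.

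Part 5 is the heart of the theorem and the main obstacle. The strategy is the classical Thom-Hirzebruch route: (i) show that $\sigma:{}^+\Omega_\bullet\otimes\mathbb{Q}\to\mathbb{Q}$ is a $\mathbb{Q}$-algebra homomorphism (this is parts 3 and 4); (ii) show that $\mathcal{L}[-]$ is also a $\mathbb{Q}$-algebra homomorphism ${}^+\Omega_\bullet\otimes\mathbb{Q}\to\mathbb{Q}$, which requires the multiplicativity of the $\mathcal{L}$-genus established via the multiplicative-sequence formalism applied to the Pontrjagin classes under Whitney sums (and hence under products, since $p(M\times N)=p(M)\times p(N)$ in the relevant sense); (iii) invoke Thom's computation ${}^+\Omega_\bullet\otimes\mathbb{Q}\cong\mathbb{Q}[y_{4k}]_{k\geq 1}$ with $y_{4k}=[\mathbb{CP}^{2k}]$ so it suffices to check agreement on the generators; (iv) compute on $\mathbb{CP}^{2k}$ using $p_j(\mathbb{CP}^{2k})=\binom{2k+1}{j}$ and the formal identity $\frac{\sqrt{z}}{\tanh\sqrt{z}}=\sum 2^{2k}B_{2k}z^k/(2k)!$, verifying $\langle\mathcal{L}_k,[\mathbb{CP}^{2k}]\rangle=1=\sigma(\mathbb{CP}^{2k})$. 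The computational bookkeeping in step (iv), together with the verification that the $\mathcal{L}_n$ indeed form a multiplicative sequence in the sense of Table \ref{polynomials-si}, is the step I expect to consume the most ink, but it is entirely algebraic once the framework is in place.

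Finally, part 6 is handled by the Wu formula. Recall that the Wu classes $v_i\in H^i(M;\mathbb{Z}_2)$ are characterized by $\langle v_i\cup x,[M]\rangle_2=\langle\mathrm{Sq}^i(x),[M]\rangle_2$ for all $x\in H^{n-i}(M;\mathbb{Z}_2)$. For a $4k$-manifold, the diagonal $\lambda(x,x)=\langle x\cup x,[M]\rangle=\langle\mathrm{Sq}^{2k}(x),[M]\rangle_2=\langle v_{2k}\cup x,[M]\rangle_2\pmod 2$; thus $v_{2k}(M)=0$ is exactly the condition that $\lambda$ has even diagonal, which by the definition of $1$-quadratic refinement is the existence of $\mu:H\to Q_{+1}(\mathbb{Z})=\mathbb{Z}$. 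Once such a $\mu$ exists, the form $(H,\lambda,\mu)$ is a nonsingular integral $1$-quadratic form, and the classical fact that such forms have signature divisible by $8$ (van der Blij's lemma) yields $\sigma(M)\equiv 0\pmod 8$.
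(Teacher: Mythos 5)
The paper never actually proves this theorem: it is quoted as classical background, and the only argument the paper indicates is the footnote sketch of the Thom--Hirzebruch route for part 5 (multiplicative sequences, Thom's computation ${}^+\Omega_\bullet\otimes_{\mathbb{Z}}\mathbb{Q}\cong\mathbb{Q}[[\mathbb{C}P^{2k}]]$, and the evaluation $<\mathcal{L}_k,[\mathbb{C}P^{2k}]>=1=\sigma(\mathbb{C}P^{2k})$), together with the Lagrangian-kernel and Wu-class facts recorded elsewhere in the section. Your outline is precisely this standard argument (Poincar\'e--Lefschetz Lagrangian for bounding manifolds, additivity and multiplicativity, Thom's rational cobordism generators, Wu formula plus van der Blij for part 6), so it follows essentially the same approach the paper relies on and is correct at the level of detail expected of a sketch.
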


\begin{definition}\label{parallelizable-manifold}
 An $n$-dimensional manifold $M$ is {\em parallelizable} if its tangent $n$-plane bundle $\tau_M:M\to BO(n)$ is trivial, i.e., isomorphic to $M\times\mathbb{R}^n\to M\equiv \epsilon^n$.

Two vector bundles $\xi_1$ and $\xi_2$ over a same base $M$ are {\em stably isomorphic} if $\xi_1\bigoplus\epsilon_1\cong\xi_2\bigoplus\epsilon_1$, where $\epsilon_i$, $i=1,2$ are vector bundles over $M$ with dimensions such that if $M$ is a complex of dimension $r$, the total fiber dimensions of the Whitney sums exceeds $r$. Such bundles are said to be in {\em stable range}.
\end{definition}
\begin{proposition}
A connected compact $n$-manifold $M$ with non-trivial boundary, is parallelizable iff it is stably parallelizable.
\end{proposition}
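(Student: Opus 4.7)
My plan is to dispatch the forward implication trivially and to prove the converse by combining two ingredients: the fact that $\partial M\not=\varnothing$ forces $M$ to be homotopy equivalent to a low-dimensional CW complex, and the fact that the fibres of the stabilization map between orthogonal classifying spaces are highly connected. The forward direction is immediate: if $TM\cong\epsilon^n$ then $TM\oplus\epsilon^k\cong\epsilon^{n+k}$ is trivial for every $k\ge 0$, so every parallelizable manifold is a fortiori stably parallelizable.

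For the converse, I will first establish that a compact connected $n$-manifold with non-empty boundary has the homotopy type of a CW complex of dimension at most $n-1$. Using Lemma \ref{morse-functions-and-cw-complexes}, one chooses an admissible Morse function $f:M\to\mathbb{R}$ with $\partial M=f^{-1}(\max)$ and no interior local maxima --- this is possible precisely because $\partial M\not=\varnothing$, since one can start from a boundary collar on which $f$ is a coordinate and then extend to the interior allowing only critical points of index strictly less than $n$. The resulting handle decomposition collapses $M$ onto an $(n-1)$-dimensional spine.

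Next I will bring in the classifying-space picture. The tangent bundle $TM$ is classified by a map $\tau_M:M\to BO(n)$, and its stabilization $TM\oplus\epsilon^k$ by a map $\tau_M^{st}:M\to BO(n+k)$, fitting into the fibration
\begin{equation}
V_k(\mathbb{R}^{n+k})\equiv O(n+k)/O(n)\longrightarrow BO(n)\longrightarrow BO(n+k).
\end{equation}
From the long exact sequence of $O(n)\hookrightarrow O(n+k)\to V_k(\mathbb{R}^{n+k})$, together with the isomorphism $\pi_i(O(n))\cong\pi_i(O(n+k))$ in the stable range $i\le n-2$, one reads off that $V_k(\mathbb{R}^{n+k})$ is $(n-1)$-connected.

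The two ingredients combine by a standard obstruction argument. Stable parallelizability means $\tau_M^{st}\simeq\ast$, and I will lift this null-homotopy step by step through $BO(n)$. The successive obstructions lie in cohomology groups $H^{i+1}(M\times I,M\times\partial I;\pi_i V_k(\mathbb{R}^{n+k}))\cong H^i(M;\pi_i V_k(\mathbb{R}^{n+k}))$, and every one of them vanishes: the coefficient group is zero for $i\le n-1$ by the connectivity of $V_k$, and the cohomology group is zero for $i\ge n$ by the dimension bound on $M$. Therefore $\tau_M$ itself is null-homotopic and $TM$ is trivial. The step I expect to be the main obstacle is the careful verification of the spine construction in the first paragraph, since that is where the boundary hypothesis is used in an essential way --- without it one cannot avoid an $n$-cell, and indeed the statement fails for closed manifolds such as $S^2$, which is stably parallelizable but not parallelizable. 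The connectivity computation and the obstruction calculation are then routine applications of classical homotopy theory.
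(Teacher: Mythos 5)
Your proof is correct and follows essentially the same route as the paper: the paper's entire argument is the one-line observation that $M$ with non-empty boundary has the homotopy type of an $(n-1)$-complex, so $TM$ lies in the stable range, which is exactly what your spine construction plus the $(n-1)$-connectivity of $V_{n+k,k}$ and the obstruction computation make explicit. The extra detail you supply (Morse function without index-$n$ critical points, vanishing of the difference obstructions) is just the standard justification the paper leaves implicit.
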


\begin{proof}
In fact $M$ has the homotopy type of an $(n-1)$-complex and thus $TM$ is in the stable range.
\end{proof}

\begin{proposition}
The set of framed $n$-manifolds properly contains the set of parallelizable $n$-manifolds.
\end{proposition}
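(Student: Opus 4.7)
The plan is to establish the claim in two steps: a containment (every parallelizable manifold is framed) and its strictness (there exists a framed manifold that is not parallelizable).

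First, I would verify the inclusion. If $M$ is parallelizable, there is an isomorphism $\varphi: TM \xrightarrow{\cong} \epsilon^n = M \times \R^n$. Taking the Whitney sum with any trivial bundle $\epsilon^k$ immediately yields a stable trivialization $\varphi \oplus \mathrm{id}: TM \oplus \epsilon^k \cong \epsilon^{n+k}$, and equivalently (via the standard relation $\nu_M \oplus TM \cong \epsilon$ coming from an embedding into Euclidean space) a stable trivialization of the normal bundle. Thus $M$ carries a canonical framing and therefore belongs to the class of framed $n$-manifolds.

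For strictness, I would exhibit a framed manifold that is not parallelizable. The canonical choice is $M = S^2$. The standard embedding $S^2 \hookrightarrow \R^3$ has trivial normal bundle $\nu \cong \epsilon^1$, so $TS^2 \oplus \epsilon^1 \cong \epsilon^3$. This stable trivialization equips $S^2$ with a framing, so $S^2$ lies in the set of framed $2$-manifolds. On the other hand, $S^2$ is not parallelizable: by the Poincar\'e--Hopf theorem (already invoked in Lemma \ref{homological-euler-characteristic}) the Euler number $\chi(S^2) = 2 \neq 0$ obstructs the existence of a nowhere-vanishing vector field on $S^2$, and consequently $TS^2 \not\cong \epsilon^2$.

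More generally, the same argument applies to $S^n$ for every $n$: all spheres are stably parallelizable via their embedding in $\R^{n+1}$, whereas by Adams' theorem on the Hopf invariant only $S^1$, $S^3$, and $S^7$ are parallelizable. Hence for $n \notin \{1,3,7\}$, every $S^n$ furnishes a framed but non-parallelizable example. The main (minor) obstacle is simply matching the notion of ``framed'' used in the paper to the stable trivialization of $TM$; once this identification is in place, the inclusion is tautological and the strictness is witnessed by the hairy-ball obstruction on $S^2$.
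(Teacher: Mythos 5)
Your proposal is correct and follows essentially the route the paper itself (implicitly) takes: the paper states the proposition without a separate proof and immediately justifies it by the Bott--Milnor example, namely that every sphere $S^n$ is framed via the trivial normal bundle of $S^n\subset\mathbb{R}^{n+1}$ while only $S^1,S^3,S^7$ are parallelizable, which is exactly your witness. Your additional remark that the containment follows since a trivialization of $TM$ stabilizes to a trivialization of the normal bundle, and that $S^2$ alone suffices via $\chi(S^2)=2\neq 0$, is a sound and slightly more self-contained version of the same argument.
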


\begin{example}[Bott-Milnor \cite{BOTT-MILNOR}]
{\em 1)} The sphere $S^n$ is framed with $S^n\times\mathbb{R}\subset\mathbb{R}^{n+1}$, $TS^n\bigoplus\epsilon\cong\epsilon^{n+1}$, but not necessarily parallelizable. (See Tab. \ref{parallelizable-n-spheres}.)
\begin{table}[h]
\caption{Parallelizable $S^n$.}
\label{parallelizable-n-spheres}
\begin{tabular}{|c|c|c|c|c|c|c|c|c|c|c|}
  \hline
  \hfil{\rm{\footnotesize $n$}}\hfil& \hfil{\rm{\footnotesize $1$}}\hfil&\hfil{\rm{\footnotesize $2$}}\hfil& \hfil{\rm{\footnotesize $3$}}\hfil&\hfil{\rm{\footnotesize $4$}}\hfil& \hfil{\rm{\footnotesize $5$}}\hfil&\hfil{\rm{\footnotesize $6$}}\hfil& \hfil{\rm{\footnotesize $7$}}\hfil&\hfil{\rm{\footnotesize $>7$}}\hfil\\
 \hline
\hfil{\rm{\footnotesize parallelizable}}\hfil& \hfil{\rm{\footnotesize YES}}\hfil&\hfil{\rm{\footnotesize NO}}\hfil& \hfil{\rm{\footnotesize YES}}\hfil&\hfil{\rm{\footnotesize NO}}\hfil& \hfil{\rm{\footnotesize NO}}\hfil&\hfil{\rm{\footnotesize NO}}\hfil& \hfil{\rm{\footnotesize YES}}\hfil&\hfil{\rm{\footnotesize NO}}\hfil\\
 \hline
\end{tabular}
\end{table}

{\em 2}) All spheres are stably parallelizable.

{\em 3)} Every homotopy sphere $\Sigma^n$ is stably parallelizable.
\end{example}

\begin{definition}[Pontrjagin-Thom construction framed-cobordism]
If $(M,\varphi)$ is any $n$-manifold with {\em framing} $\varphi:\nu(M)\cong M\times\mathbb{R}^{k-n}$ of the normal bundle in $\mathbb{R}^k$, the same definition yields a map $p(M,\varphi)\in\pi_n^s$. If $(M_1,\varphi_1)\sqcup (M_2,\varphi_2)\subset\mathbb{R}^k$ form the framed boundary of a $(n+1)$-manifold $(W,\partial W,\Phi)\subset(\mathbb{R}^k\times[0,\infty),\mathbb{R}^k\times \{0\})$, we say that are {\em framed cobordant}. The framed cobordism is an equivalence relation and the corresponding set of framed cobordism classes is denoted by $\Omega^{fr}_n$. This is an abelian additive group with respect the operation of disjoint union $\sqcup$. The class $[\varnothing]$ is the zero of $\Omega^{fr}_n$. Then one has the canonical isomorphism given in {\em(\ref{isomorphisms-framed-cobordism-groups-n-stems})}.\footnote{In Tab. \ref{calculated-h-cobordism-groups-homotopy-sphere} are reported the $n$-stems for $0\le n\le 17$.}

\begin{equation}\label{isomorphisms-framed-cobordism-groups-n-stems}
\left\{
\begin{array}{l}
\Omega^{fr}_n\cong  \pi_n^s \\
 M\mapsto f_M:S^{n+k}\to S^{n+k}/(S^{n+k}\setminus(M\times\mathbb{R}^k))=\Sigma^kM^+\to S^k\\
\end{array}
\right.
\end{equation}
where $M^+\equiv M\bigcup\{pt\}$, and $\Sigma^k$ is the $k$-suspension functor for spectra.
\end{definition}

\begin{example}[Smale's paradox and framed cobordism]\label{smale-paradox-framed-cobordism}
Let us consider the so-called {\em Smale's paradox} turning a sphere $S^2\subset\mathbb{R}^3$ inside out. (See also \cite{PRA16}.) Let us denote by ${}^{-}S^2$ the sphere $S^2$ with reversed orientation. Let us first note that these surfaces are characterized by the same generalized curvature integra. It is useful to recall here some definitions and results about this topological invariant. The {\em generalized Gauss map} of an $n$-dimensional framed manifold $M$ with $f:M\hookrightarrow\mathbb{R}^{n+k}$, $\nu_f\cong\epsilon^k$, is the map $c:M\to V_{n+k,k}$, $x\mapsto ((\nu_f)_x =\mathbb{R}^k\hookrightarrow T_{f(x)}\mathbb{R}^{n+k}=\mathbb{R}^{n+k})$ and classifies the tangent $n$-planes bundle $\tau_M:M\to BO(n)$, with the $k$-stable trivialization
$TM\bigoplus\epsilon^k\cong
TM\bigoplus\nu_f=\epsilon^{n+k}=T\mathbb{R}^{n+k}|_M$.\footnote{$V_{n+k,k}\cong O(n+k)/O(n)$ is the {\em Stiefel space} of orthonormal $k$-frames in $\mathbb{R}^{n+k}$, or equivalently of isometries
$\mathbb{R}^k\to\mathbb{R}^{n+k}$. $V_{n+k,k}$ is $(n-1)$-connected with $H_n(V_{n+k,k})=\mathbb{Z}$, if $n\equiv 0\: (\MOD\, 2)$ or if $k=1$, and $H_n(V_{n+k,k})=\mathbb{Z}_2$,
if $n\equiv 1\: (\MOD\, 2)$ and $k>1$. One has $G_{n+k,k}=V_{n+k,k}/O(k)$, where $G_{n+k,k}$ is the Grassmann space of $k$-dimensional subspaces of $\mathbb{R}^{n+k}$. Then the classifying space for $n$-planes is $BO(n)=\mathop{\lim}\limits_{\overrightarrow{k}}G_{n+k,k}$, and the corresponding stable classifying space is $BO=\mathop{\lim}\limits_{\overrightarrow{n}}BO(n)$.} The {\em generalized curvatura integra} of $M$ is the degree of the generalized Gauss map.
\begin{equation}\label{generalized-curvatura-integra}
  c_*[M]\in H_{n}(V_{n+k,k})=\left\{
  \begin{array}{l}
  \mathbb{Z},\: \hbox{\rm if $n\equiv 0\: (\MOD\, 2)$}\\
   \mathbb{Z}_2,\: \hbox{\rm if $n\equiv 1\: (\MOD\, 2)$}\\
\end{array}
  \right\}\: (k>1).
\end{equation}
The curvatura integra of an $n$-dimensional framed manifold $M$ can be expressed with the {\em Kervaire's formula} given in {\em(\ref{kervaire-formula-curvatura integra})}.
\begin{equation}\label{kervaire-formula-curvatura integra}
     c_*[M]= Hopf(M)+\left\{
  \begin{array}{l}
 \chi(M)/2\in \mathbb{Z},\: \hbox{\rm if $n\equiv 0\:  (\MOD\, 2)$}\\
  \chi_{1/2}(M)\in \mathbb{Z}_2,\: \hbox{\rm if $n\equiv 1\:  (\MOD\, 2)$}\\
\end{array}
  \right.
\end{equation}
where
\begin{equation}\label{kervaire-semicharacteristic}
     \chi_{1/2}(M)=\sum_{0\le j\le (n-1)/2}\dim_{\mathbb{Z}_2}H_j(M;\mathbb{Z})\in\mathbb{Z}_2
\end{equation}
is called the {\em Kervaire semicharacteristic}, and
\begin{equation}\label{hopf-invariant-curvatura-integra}
     Hopf[M]= \left\{
  \begin{array}{l}
 0\in \mathbb{Z},\: \hbox{\rm if $n\equiv 0\:  (\MOD\, 2)$}\\
  H_2(F)\in \mathbb{Z}_2,\: \hbox{\rm if $n\equiv 1\:  (\MOD\, 2)$}\\
\end{array}
  \right.
\end{equation}
with $F:S^{n+k}\to S^k$ the Pontrjagin-Thom map, and $H_2(F)$ determined by the {\em mod 2 Hopf invariant}. This is the morphism
\begin{equation}\label{mod-2-hopf-invariant}
 \left\{
  \begin{array}{l}
 H_2:\pi_{n+k}(S^k)\to \mathbb{Z}\\
  (F:S^{n+k}\to S^k)\mapsto H_2(F),\: (m\ge 1)\\
\end{array}
  \right.
\end{equation}
determined by the Steenrod square in the mapping cone $X=S^k\bigcup_{F}D^{n+k+1}$. If $a=1\in H^k(X;\mathbb{Z}_2)=\mathbb{Z}_2$, $b=1\in H^{n+k+1}(X;\mathbb{Z}_2)=\mathbb{Z}_2$, then $S_q^{n+1}(a)=H_2(F)b\in H^{n+k+1}(X;\mathbb{Z}_2)$. One has (Adams) that $H_2=0$ for $n\not=1,3,7$. $Hopf(M)$ is a framed cobordism invariant.

Taking into account that $\chi(S^2)=\chi({}^{-}S^2)=2$, and that $Hopf(S^2)=Hopf({}^{-}S^2)=0$, we get $c_*[S^2]=c_*[{}^{-}S^2]=1$. Furthermore one has $\Omega_2^{fr}\cong\pi_2^s=\mathbb{Z}_2\cong\Omega_2$. In order to see that $S^2$ is cobordant with ${}^{-}S^2$ it is enough to prove that ${}^{-}S^2$ can be obtained by $S^2$ by a sequence of surgeries. (See Theorem \ref{handle-decomposition-bordisms}.) In fact, we can write the oriented $S^2$ as $S^2=D^2_{W}\bigcup_{S^1}D^2_{E}$, where $D^2_{W}$ and $D^2_{E}$ are two oriented discs in such a way that $S^2$ is oriented with outgoing normal unitary vector field. (Fig. \ref{surgeries-smale-paradox}(a).) By a surgery we can remove $D^2_{E}$ and smoothly add another  $D^2_{E}$ on the left of  $D^2_{W}$. (Fig. \ref{surgeries-smale-paradox}(b).) Next by an orientation preserving diffeomorphisms, we get ${}^{-}S^2$, the surface represented in Fig. \ref{surgeries-smale-paradox}(c).

\begin{figure}[ht]
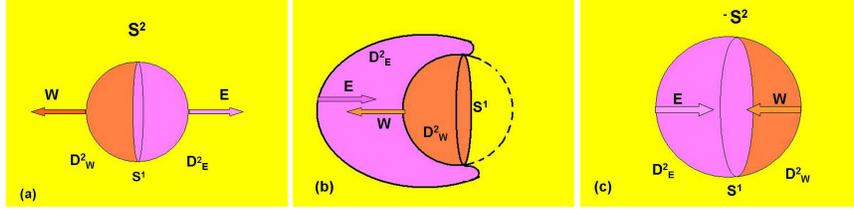

\centering
\scalebox{0.7}{$\centerline{\includegraphics[height=4cm]{surgeries-smale-paradox-a.eps} \includegraphics[height=4cm]{surgeries-smale-paradox-b.eps} \includegraphics[height=4cm]{surgeries-smale-paradox-c.eps}}$}
\caption{Surgery and Smale's paradox turning a sphere $S^2\subset\mathbb{R}^3$ inside out.}
\label{surgeries-smale-paradox}
\end{figure}
In conclusion $S^2\sqcup {}^{-}S^2=\partial W$, where $W\cong (S^2\times\{0\})\times I\cong ({}^{-}S^2\times\{1\})\times I\subset \mathbb{R}^3\times[0,\infty)$. Therefore, $S^2$ is framed cobordant with $ {}^{-}S^2$. Furthermore $S^2\cong {}^{-}S^2$ (diffeomorphism reversing orientation), that agrees with the well known result in differential topology that two connected, compact, orientable surfaces are diffeomorphic iff they have the same genus, the same Euler characteristics and the same number of boundaries. (See, e.g. \cite{HIRSCH2}.)

Another proof that $S^2$ and $-S^2$ are cobordant can be obtained by the Arf invariant. Let us recall that if $M$ is a framed surface $M\times\mathbb{R}^k\subset\mathbb{R}^{k+2}$, the intersection form $(H^1(M),\lambda)$ has a canonical $(-1)$-quadratic function $\mu:H^1(M)=H_1(M)\to Q_{-1}(\mathbb{Z})=\Omega_1^{fr}=\mathbb{Z}_2$, given by $x\mapsto (x:S^1\hookrightarrow M)$, sending each $x\in H^1(M)$ to an embedding $x:S^1\hookrightarrow M$ with a corresponding framing $S^1\times\mathbb{R}^{k+1}\subset\mathbb{R}^{k+2}$, $\delta\nu_x:\nu_x\bigoplus\epsilon^k\cong\epsilon^{k+1}$. Then one has the isomorphism $Arf:\Omega_2^{fr}=\pi_2^s\cong\mathbb{Z}_2$, $[M]\mapsto Arf(H^1(M),\lambda,\mu)$. In the particular case that $M=S^2\sqcup-S^2$, we get $H^1(M)=0$ and $Arf(H^1(M),\lambda,\mu)=0$, hence must necessarily be $[M=S^2\sqcup-S^2]=0\in\Omega_2^{fr}$. This agrees with the fact that $\Omega_2^{fr}=\mathbb{Z}_2=\Omega_2$, and that both surfaces $S^2$ and $-S^2$ belong to $0\in \Omega_2$ since are orientable ones.
\end{example}
\begin{definition}
An $n$-dimensional manifold $V$ with boundary $\partial V$, is {\em almost framed} if the open manifold $V\setminus\{pt\}$ framed: $(V\setminus\{pt\})\times\mathbb{R}^k\subset\mathbb{R}^{n+k}$ (for $k$ large enough).
\end{definition}

\begin{theorem}[Properties of almost framed manifold]

{\em 1)} An almost framed manifold $V$, with $\partial V\not=\varnothing$ is a framed manifold and a parallelizable manifold.

{\em 2)} If $V$ is an almost framed manifold with $\partial V=\varnothing$, then there is a {\em framing obstruction}
 \begin{equation}\label{framing-obstructions}
 \mathfrak{o}(V)\in\ker(J:\pi_{n-1}(O)\to\pi_{n-1}^s)
\end{equation}
in the sense that $V$ is framed iff $\mathfrak{o}(V)=0$.

{\em 3) (Kervaire invariant for almost framed manifolds).} Let $(M,\partial M)$ be a $(4k+2)$-dimensional almost framed manifold with boundary such that either $\partial M=\varnothing$ or $H_\bullet(M)=H_\bullet(S^{4k+1})$, so that $(H^{2k+1}(M;\mathbb{Z}_2),\lambda\,\mu)$ is a nonsingular quadratic form over $\mathbb{Z}_2$. The {\em Kervaire} of $M$ is defined in {\em(\ref{kervariant-inavriant})}.
\begin{equation}\label{kervariant-inavriant}
 Kervaire(M)=Arf(H^{2k+1}(M;\mathbb{Z}_2),\lambda\,\mu).
\end{equation}
One has the following propositions.

{\em (i)} If $\partial M=\varnothing$ and $M=\partial N$ is the boundary of a $(4k+3)$-dimensional almost framed manifold $N$, then $Kervaire(M)=0\in\mathbb{Z}_2$.

{\em (ii)} The Kervaire of a manifold identifies a framed cobordism invariant, i.e., it defines a map $Kervaire:\Omega^{fr}_{4k+2}=\pi_{4k+2}^s\to\mathbb{Z}_2$ that is $0$ if $k\not=2^i-1$.

{\em (iii)} There exist $(4k+2)$-dimensional framed manifolds $M$ with $Kervaire(M)=1$, for $k=0,1,3,7$. For $k=0,1,3$ can take $M=S^{2k+1}\times S^{2k+1}$.

{\em 4) (Kervaire-Milnor's theorem on almost framed manifolds).} Let us denote by $\Omega^{afr}_n$ the cobordism group of closed $n$-dimensional almost framed manifolds. One has the exact sequence in {\em(\ref{almost-framed-manifolds-exact-sequence})}.
 \begin{equation}\label{almost-framed-manifolds-exact-sequence}
\xymatrix{\Omega^{afr}_n\ar[r]^{\mathfrak{o}}&\pi_{n-1}(O)\ar[r]^{J}&\pi_{n-1}^s}
\end{equation}

$\bullet$\hskip 3pt For a $4k$-dimensional almost framed manifold $V$ one has the framing obstruction reported in {\em(\ref{framing-obstructions-b})}.\footnote{For $k=1$ one has $j_1=24$; $\mathfrak{o}(V)=p_1(V)/2\in 24\mathbb{Z}\subset\pi_{3}(O)=\mathbb{Z}$.}
 \begin{equation}\label{framing-obstructions-b}
\scalebox{0.9}{$\left\{\begin{array}{ll}
         \mathfrak{o}(V) & =p_k(V)/(a_k(2k-1)!) \\
         & \ker(J:\pi_{4k-1}(O)\to\pi_{4k-1}^s) \\
         &=j_k\mathbb{Z}\subset\pi_{4k-1}(O)=\mathbb{Z}\\
       \end{array}
\right\}\begin{array}{l}
p_k(V)\in H^{4k}(V)=\mathbb{Z}\: \hbox{\rm  (Pontryagin class)}\\
a_k=\left\{
\begin{array}{l}
1\: \hbox{\rm  for $k\equiv 0\: (\MOD\, 2)$}\\
2\: \hbox{\rm  for $k\equiv 1\: (\MOD\, 2)$}\\
\end{array}
\right\}\: j_k=\DEN(\frac{B_k}{4k})\\
\end{array}$}
\end{equation}

{\em 5) (Kervaire-Milnor's theorem on almost framed manifolds-2).}  Let $P_n$ be the cobordism group of $n$-dimensional framed manifolds with homotopy sphere boundary. ($P_n$ is called the {\em $n$-dimensional simply-connected surgery obstruction group}.) For $n\ge 4$ $\Theta_n$ is finite, with the short exact sequence given in {\em(\ref{almost-framed-manifolds-cobordism-groups-homotopy-spheres-short-exact-sequence})}.
  \begin{equation}\label{almost-framed-manifolds-cobordism-groups-homotopy-spheres-short-exact-sequence}
\xymatrix{0\ar[r]&\COKER(a:\Omega^{afr}_{n+1}\to P_{n+1})\ar[r]^{b}&\Theta_n\ar[r]^{c}&\ker(a:\Omega^{afr}_n\to P_n)\ar[r]&0\\}
\end{equation}
and
$$\ker(a)\subseteq \COKER(J:\pi_{n}\to\pi_n^s)=\ker(\mathfrak{o}:\Omega^{afr}_n\to\pi\pi_{n-1}(O)).$$

$\bullet$\hskip 3pt In {\em(\ref{calculated-pn})} are reported the calculated groups $P_n$.
 \begin{equation}\label{calculated-pn}
\left\{\begin{array}{l}
         P_{2n+1}=0 \\
         P_{n}=\left\{\begin{array}{l}
\mathbb{Z}\: \hbox{\rm  if $n\equiv 0\: (\MOD\, 4)$}\\
0\: \hbox{\rm  if $n\equiv 1\: (\MOD\, 4)$}\\
\mathbb{Z}_2\: \hbox{\rm if $n\equiv 2\: (\MOD\, 4)$}\\
0\: \hbox{\rm  if $n\equiv 3\: (\MOD\, 4)$}\\
\end{array}\right\}\\
       \end{array}\right.
\end{equation}

{\em 6) (Kervaire-Milnor's braid $n\ge5$).} For $n\ge 5$ there is the exact commutative braid diagram given in {\em(\ref{kervaire-milnor-braid})}.
 \begin{equation}\label{kervaire-milnor-braid}
\scalebox{0.6}{$\xymatrix{&\pi_{n+1}(G/PL)=P_{n+1}\ar[dr]\ar@/^2pc/[rr]^b&& \pi_{n}(PL/O)=\Theta_{n}\ar[dr]^c\ar@/^2pc/[rr]^0&&\pi_{n-1}(O)\\
\pi_{n+1}(G/O)=\Omega^{afr}_{n+1}\ar[ur]^a\ar[dr]_{\mathfrak{o}}&&\pi_n(PL)\ar[ur]\ar[dr]&&\pi_n(G/O)=\Omega^{afr}_n\ar[ur]^{\mathfrak{o}}\ar[dr]^a&\\
&\pi_n(O)\ar[ur]\ar@/_2pc/[rr]_J&&\pi_n(G)=\pi^s_n=\Omega^{fr}_n\ar[ur]\ar@/_2pc/[rr]&&\pi_n(G/PL)=P_n}$}
\end{equation}
In {\em(\ref{kervaire-milnor-braid})} the mappings $a$, $b$ and $c$ are defined in {\em(\ref{definitions-a-b-c-kervaire-milnor-braid})}.
\begin{equation}\label{definitions-a-b-c-kervaire-milnor-braid}
\left\{
\begin{array}{l}
 a:\Omega^{afr}_{2n}\to P_{2n},\:  a(M)=\left\{
\begin{array}{l}
  \frac{1}{8}\sigma(M)\in\mathbb{Z} \hskip 3pt\hbox{\rm if $n\equiv 0\: (\MOD\, 2)$} \\
  Kervaire(M)\in\mathbb{Z}_2 \hskip 3pt\hbox{\rm if $n\equiv 1\: (\MOD\, 2)$} \\
\end{array}\right\}\in P_{2n}.\\
 b:P_{2n}\to\Theta_{2n-1} ,\:  b(M)=\hbox{\rm plumbing construction $\Sigma=\partial M$.}\\
 c:\Theta_n\to\Omega^{afr}_n,\:  c(\Sigma)=[\Sigma]\in\Omega^{afr}_n\\
\end{array} \right.
\end{equation}
The image of $b$ is denoted $bP_n\triangleleft\Theta_{n-1}$. Then if $\Sigma\in bP_n$, then $\Sigma=\partial V$, where $V$ is a $n$-dimensional framed differentiable manifold. Furthermore, by considering the mapping $c$ as $c:\Theta_n\to\pi_n(G/O)$, it sends an $n$-dimensional exotic sphere $\Sigma$ to its fibre-homotopy trivialized stable normal bundle.

{\em 7) (Kervaire-Milnor's braid $n=4k+2\ge5$).} For $n=4k+2\ge 5$ the exact commutative braid diagram given in {\em(\ref{kervaire-milnor-braid})} becomes the one reported in {\em(\ref{kervaire-milnor-braid-b})}.

\begin{equation}\label{kervaire-milnor-braid-b}
\scalebox{0.6}{$\xymatrix{P_{4k+3}=0\ar[dr]\ar@/^2pc/[rr]^b&&\Theta_{4k+2} \ar[dr]^c\ar@/^2pc/[rr]^0&&\pi_{4k+1}(O)\ar[dr]^c\ar\ar@/^2pc/[rr]^J&&\pi_{4k+1}(G)\\
&\pi_{4k+2}(PL)\ar[ur]\ar[dr]&&\pi_{4k+2}(G/O)\ar[dr]^a\ar[ur]^{\mathfrak{o}}&&\pi_{4k+1}(PL)\ar[ur]\ar[dr]&\\
\pi_{4k+2}(O)=0\ar[ur]\ar@/_2pc/[rr]_J&&\pi_{4k+2}(G)\ar[ur]\ar@/_2pc/[rr]_{K}&&P_{4k+2}=\mathbb{Z}_2\ar[ur]
\ar@/_2pc/[rr]_b&&\Theta_{4k+1}}$}
\end{equation}
$K$ is the Kervaire invariant on the $(4k+2)$-dimensional stable homotopy group of spheres
\begin{equation}\label{k-invariant}
\left\{\begin{array}{ll}
K:\pi_{4k+2}(G)&=\pi_{4k+2}^s=\mathop{\lim}\limits_{\overrightarrow{j}}\pi_{j+4k+2}(S^j)\\
&=\Omega^{fr}_{4k+2}\to P_{4k+2}=\mathbb{Z}_2.\\
\end{array} \right.
\end{equation}

$\bullet$\hskip 3pt $K$ is the surgery obstruction: $K=0$ iff every $(4k+2)$-dimensional framed differentiable manifold is framed cobordant to a framed exotic sphere.

$\bullet$\hskip 3pt The exotic sphere group $\Theta_{4k+2}$ fits into the exact sequence {\em(\ref{(4k+2)-exotic-sphere-group-exact-sequence})}.
\begin{equation}\label{(4k+2)-exotic-sphere-group-exact-sequence}
\scalebox{0.9}{$\xymatrix{0\ar[r]&\Theta_{4k+2}\ar[r]&\pi_{4k+2}(G)\ar[r]_{K}&\mathbb{Z}_2\ar[r]&\ker(\pi_{4k+1}(PL)\to \pi_{4k+1}(G))\ar[r]&0}$}
\end{equation}
$\bullet$\hskip 3pt $a:\pi_{4k+2}(G/O)\to\mathbb{Z}_2$ is the surgery obstruction map sending a normal map $(f,b):M\to S^{4k+2}$ to the Kervaire invariant of $M$.

$\bullet$\hskip 3pt $b:P_{4k+2}=\mathbb{Z}_2\to\Theta_{4k+1}$ sends the generator $1\in\mathbb{Z}_2$ to the boundary $b(1)=\Sigma^{4k+1}=\partial W$ of the Milnor plumbing $W$ of two copies of $TS^{2k+1}$ using the standard rank $2$ quadratic form $\scalebox{0.4}{$\left(
                                    \begin{array}{cc}
                                      1 & 1 \\
                                      0 & 1 \\
                                    \end{array}
                                  \right)$}$ over $\mathbb{Z}$ with Arf invariant $1$.
The subgroup $bP_{4k+2}\triangleleft\Theta_{4k+1}$ represents the $(4k+1)$-dimensional exotic spheres $\Sigma^{4k+1}=\partial V$ that are boundaries of framed $(4k+2)$-dimensional differentiable manifolds $V$.

If $k$ is such that $K=0$ (e.g. $k=2$) then $bP_{4k+2}=\mathbb{Z}_2\triangleleft\Theta_{4k+1}$ and if $\Sigma^{4k+1}=1\in bP_{4k+2}$, then the $(4k+2)$-dimensional manifold $M=V\bigcup_{\Sigma^{4k+1}}D^{4k+2}$ is a PL manifold without a differentiable structure.

$\bullet$\hskip 3pt For any $k\ge 1$ the following propositions are equivalent.

{\em(i)} $K:\pi_{4k+2}(G)=\pi^s_{4k+2}\to\mathbb{Z}_2$ is $K=0$.

{\em(ii)} $\Theta_{4k+2}\cong\pi_{4k+2}(G)$.

{\em(iii)} $\ker(\pi_{4k+1}(PL)\to\pi_{4k+1}(G))\cong\mathbb{Z}_2$.

{\em(iv)} Every simply-connected $(4k+2)$-dimensional Poincar\'e complex $X$ with a vector bundle reduction $\widetilde{\nu}_x:X\to BO$ of the Spivak normal fibration $\nu_x:X\to BG$ is homotopy equivalent to a closed $(4k+2)$-dimensional differentiable manifold.

\end{theorem}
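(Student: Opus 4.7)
The plan is to treat this omnibus result as the standard Kervaire–Milnor package and decompose it into four blocks: (1)--(2) via obstruction theory for framings; (3) via the Arf invariant of quadratic refinements; (4)--(5) via the Pontrjagin--Thom construction combined with surgery below the middle dimension; and (6)--(7) via the Puppe sequences of the fibrations $O\to PL\to PL/O$, $O\to G\to G/O$, $PL\to G\to G/PL$, specialised at $n=4k+2$ by Bott periodicity.

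For (1) I would use that a compact $n$-manifold $V$ with $\partial V\neq\varnothing$ is homotopy equivalent to an $(n-1)$-complex, so $TV$ lies in the stable range; hence stable parallelisability is equivalent to parallelisability, and the framing on $V\setminus\{pt\}$ extends across the missing point because the puncture may be pushed into the collar of $\partial V$. For (2), closing $V$ forces one to extend a stable framing across a small disc $D^n\subset V$ at the puncture: restriction to $\partial D^n=S^{n-1}$ produces a class $\mathfrak{o}(V)\in\pi_{n-1}(O)$ (Gauss map of the framing), and $\mathfrak{o}(V)=0$ iff $V$ is framed. The image $J(\mathfrak{o}(V))\in\pi^s_{n-1}$ vanishes because the stable framing on $S^{n-1}$ already extends over $V\setminus\mathrm{int}\,D^n$, which exhibits it as a framed null-cobordism, giving $\mathfrak{o}(V)\in\ker J$.

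For (3), the Kervaire invariant is the Arf invariant of the canonical $(-1)$-quadratic refinement $\mu$ of $(H^{2k+1}(M;\Z_2),\lambda)$, already defined in Example \ref{smale-paradox-framed-cobordism}. Item (i) follows because an almost framed nullbordism $N$ produces a Lagrangian subspace $L=\mathrm{im}(H^{2k+1}(N;\Z_2)\to H^{2k+1}(M;\Z_2))$ on which $\mu$ vanishes, and a standard fact from quadratic form theory yields $\mathrm{Arf}=0$. Item (ii) is additivity of Arf over framed cobordisms, reduced to (i). For (iii), the Lie group framings on $S^1,S^3,S^7$ give $\mathrm{Kervaire}(S^{2k+1}\times S^{2k+1})=1$ for $k=0,1,3$ by direct computation on the obvious symplectic basis. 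For (4), exactness of $\Omega^{afr}_n\xrightarrow{\mathfrak{o}}\pi_{n-1}(O)\xrightarrow{J}\pi^s_{n-1}$ is essentially the construction above combined with the Pontrjagin--Thom isomorphism $\Omega^{fr}_{n-1}\cong\pi^s_{n-1}$, and the explicit formula $\mathfrak{o}(V)=p_k(V)/(a_k(2k-1)!)$ follows from Bott's computation $\pi_{4k-1}(O)=\Z$ together with the standard identification of its generator via the $k$th Pontrjagin class of the tangent bundle of an almost framed $4k$-manifold; the identification $\ker J=j_k\Z$ is Adams' theorem on the image of $J$. The table of $P_n$ in (5) reproduces the computations of even $L$-groups already quoted in Theorem \ref{obstructions-manifold-structures}(3): $\mathbb{Z}$ (signature/8) in dimension $4k$, $\Z_2$ (Arf) in dimension $4k+2$, zero otherwise.

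For (5), the short exact sequence is obtained by applying the Browder--Novikov surgery machine to the universal almost framed problem: $\mathrm{coker}(a:\Omega^{afr}_{n+1}\to P_{n+1})$ classifies exotic spheres bounding parallelisable manifolds (the image of $b$ is precisely $bP_{n+1}$), while $\ker(a:\Omega^{afr}_n\to P_n)$ classifies almost framed homotopy spheres modulo cobordism. For (6), I would assemble the long exact homotopy sequences of the three fibrations above, using the Sullivan--Wall identifications $\pi_n(G/O)=\Omega^{afr}_n$, $\pi_n(G)=\pi^s_n$, $\pi_n(G/PL)=P_n$, and $\pi_n(PL/O)=\Theta_n$ (valid for $n\ge 5$ by Smale's h-cobordism theorem and the Hirsch--Munkres--Kirby--Siebenmann results of Lemmas \ref{hirsch-munkres-lemma}--\ref{kirby-siebenman-lemma}); the braid structure and its exactness are then formal consequences of the octahedral/Puppe pattern. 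For (7), specialise at $n=4k+2$: by Bott periodicity $\pi_{4k+2}(O)=0$, and from (5) $P_{4k+3}=0$, $P_{4k+2}=\Z_2$, giving the displayed braid. The exact sequence for $\Theta_{4k+2}$ and the equivalence of the four statements about $K$ are then a diagram chase through (\ref{kervaire-milnor-braid-b}).

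The main obstacles will be (a) producing the correct compatibility isomorphisms needed for the braid to commute (in particular $\pi_n(PL/O)=\Theta_n$, which uses the full Kirby--Siebenmann classification and is available only for $n\ge 5$), and (b) pinning down the integer $a_k$ in part (4); the latter requires a careful comparison of the Bott generator with the Pontrjagin class of the tangent bundle, and correcting for the factor of $2$ arising from the (non-)divisibility of $p_k$ in the cases $k$ even versus $k$ odd. All other steps are applications of classical invariants (signature, Arf, image of $J$) to standard diagrams.
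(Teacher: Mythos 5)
You should know at the outset that the paper does not prove this theorem at all: it is stated without proof, as a summary of classical Kervaire--Milnor--Wall theory (the relevant sources being \cite{KERVAIRE-MILNOR, MILNOR, SMALE1, WALL2}), in keeping with the expository style of the section. So there is no internal argument to measure your proposal against; the only possible comparison is with the classical literature the paper is quoting. Measured that way, your decomposition is the standard one and is sound as a program: obstruction theory for extending a stable framing across the puncture (the spine/stable-range argument for part 1), and the framed-nullbordism argument showing $\mathfrak{o}(V)\in\ker J$ for part 2); the Arf invariant of the quadratic refinement, with a Lagrangian produced by a nullbordism, for part 3)(i)--(ii); Pontrjagin--Thom plus surgery below the middle dimension for parts 4)--5); and the interlocking exact sequences of $O\to PL\to PL/O$, $O\to G\to G/O$, $PL\to G\to G/PL$, together with the identifications $\pi_n(G/O)\cong\Omega^{afr}_n$, $\pi_n(G)\cong\pi^s_n$, $\pi_n(G/PL)\cong P_n$, $\pi_n(PL/O)\cong\Theta_n$ for $n\ge 5$, for parts 6)--7).

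Two caveats, so that you do not mistake the sketch for a proof. First, several asserted ingredients are themselves deep theorems that your outline (like the paper) can only cite, not derive: Adams' computation of $\IM J$ (needed for $\ker J=j_k\mathbb{Z}$ with $j_k=\DEN(\frac{B_k}{4k})$), the normalization $a_k$ comparing the Bott generator of $\pi_{4k-1}(O)$ with $p_k$, and the smoothing-theory isomorphism $\pi_n(PL/O)\cong\Theta_n$. Second, item 3)(iii) as stated includes $k=7$: the product-of-spheres construction you invoke works only for $k=0,1,3$ (where $S^{2k+1}$ carries an H-space framing; note $S^7$ is not a Lie group, only an H-space), and the existence of a framed $30$-dimensional manifold with Kervaire invariant one is a separate hard result of stable homotopy theory that your argument does not reach. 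Neither point is a defect relative to the paper, which supplies no argument whatsoever, but they mark where your program bottoms out in citations rather than in proofs you would actually write.
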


\begin{theorem}[Pontrjagin, Thom, Kervaire-Milnor]
{\em 1)} Let $bP_{n+1}$ denote the set of those h-cobordism classes of homotopy spheres which bound parallelizable manifolds.\footnote{$\: bP_{n+1}$ is a subgroup of $\Theta_n$. If $\Xi_1,\Xi_2\in bP_{n+1}$, with bounding parallelizable manifolds $W_1$ and $W_2$ respectively, then $\Xi_1\sharp\Xi_2$ bounds the parallelizable manifold $W_1\sharp W_2$, (commutative sum along the boundary).} For $n\not= 3$, there is a short exact sequence
\begin{equation}\label{split-short-exact-sequence-homotopy-spheres}
 \xymatrix{0\ar[r]&bP_{n+1}\ar[r]&\Theta_n\ar[r]&\Theta_n/bP_{n+1}\ar[r]&0}
\end{equation}
where the left hand group is finite cyclic. Furthermore, there exists an homomorphism $J:\pi_n(SO)\to\pi_n^s$ such that $\Theta_n/bP_{n+1}$ injects into $\pi_n^s/J(\pi_n(SO))$ via the Pontrjagin-Thom construction. When $n\not= 2^j-2$, the right hand group is isomorphic to $\pi_n^s/J(\pi_n(SO))$.

{\em 2)} If $\Sigma^n$ bounds a parallelizable manifold, it bounds a parallelizable manifold $W$ such that $\pi_j(W)=0$, $j<n/2$.

{\em 3)} For any $k\ge 1$, $bP_{2k+1}=0$.
\end{theorem}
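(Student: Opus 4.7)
The plan is to build the Pontrjagin--Thom invariant on $\Theta_n$, identify its kernel with $bP_{n+1}$, and then analyze the two pieces separately using the surgery machinery already recorded in the Kervaire--Milnor braid \eqref{kervaire-milnor-braid}. First I would define a homomorphism $p:\Theta_n\to\pi_n^s/J(\pi_n(SO))$ as follows. Since every homotopy sphere $\Sigma^n$ is stably parallelizable, it admits a stable framing $\varphi$ of its normal bundle, and Pontrjagin--Thom assigns to $(\Sigma,\varphi)$ an element of $\Omega_n^{fr}\cong\pi_n^s$ via the isomorphism \eqref{isomorphisms-framed-cobordism-groups-n-stems}. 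Two stable framings differ by an element of $[\Sigma,SO]\cong\pi_n(SO)$ whose Pontrjagin--Thom image is $J(\pi_n(SO))$, so $p$ descends to a well-defined map on $\Theta_n$. Its kernel consists of those $[\Sigma]$ whose framing extends to a framed nullbordism; combined with the fact that a compact manifold with nonempty boundary is parallelizable iff stably parallelizable, this is exactly $bP_{n+1}$, yielding the exact sequence \eqref{split-short-exact-sequence-homotopy-spheres} together with the desired injection $\Theta_n/bP_{n+1}\hookrightarrow\pi_n^s/J(\pi_n(SO))$.

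Next I would establish parts (2) and (3) in one stroke via surgery below the middle dimension. Given $\Sigma=\partial W$ with $W$ parallelizable, framed surgery along an embedding $S^j\times D^{n+1-j}\hookrightarrow W\setminus\partial W$ kills an element of $\pi_j(W)$ while preserving parallelizability, since in the range $j<n/2$ general position guarantees the embedding exists and the normal bundle of the attaching sphere is trivial. Iterating through $0\le j<n/2$ proves (2). For (3), the total dimension is $n+1=2k+1$, so $P_{n+1}=0$ by \eqref{calculated-pn}; this eliminates the only algebraic obstruction to continuing the surgery through the middle dimension and ultimately rendering $W$ contractible. Then $\Sigma=\partial W$ is h-cobordant to $S^{2k}$ via $W\setminus D^{n+1}$, so $bP_{2k+1}=0$.

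For the cyclicity and finiteness of $bP_{n+1}$, I would invoke the map $b:P_{n+1}\to\Theta_n$ from \eqref{kervaire-milnor-braid}, whose image is by construction exactly $bP_{n+1}$. Since $P_{n+1}$ is one of $\mathbb{Z}$, $\mathbb{Z}_2$, or $0$ by \eqref{calculated-pn}, the image is cyclic, and in the only potentially infinite case $n+1=4k$ finiteness follows because the signature obstruction on $[W]\in\Omega^{afr}_{4k}$ can be realized (up to a fixed nonzero multiple) on a closed parallelizable $4k$-manifold bounding a homotopy sphere; hence $bP_{4k}$ is a finite cyclic quotient of $P_{4k}\cong\mathbb{Z}$, whose order is the Kervaire--Milnor number listed in Tab.~\ref{calculated-h-cobordism-groups-homotopy-sphere}.

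The hardest step will be the final assertion of part (1), namely surjectivity of $\Theta_n/bP_{n+1}\to\pi_n^s/J(\pi_n(SO))$ whenever $n\ne 2^j-2$. This requires showing that an arbitrary framed $n$-manifold $M$ representing a given class in $\pi_n^s/J$ can be surgered to a homotopy sphere: the obstruction lies in $P_n$ and equals $\sigma(M)/8$ for $n\equiv 0\pmod 4$ (which can always be absorbed by connected sum with signature-calibrated generators, hence vanishes modulo $bP_{n+1}$) and the Kervaire invariant $K(M)$ for $n\equiv 2\pmod 4$. The latter vanishes on the image of $\pi_n^s\to P_n=\mathbb{Z}_2$ precisely when $n\ne 2^j-2$, by Browder's theorem on the Kervaire invariant, which is the deep input beyond the formal braid machinery.
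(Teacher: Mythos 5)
Your proposal is correct in outline, and it is worth noting that its first paragraph is essentially the \emph{entire} proof the paper gives: the paper only sets up the Pontrjagin--Thom invariant $p(M,\varphi)\in\pi_n^s$, records that $0\in p(M)$ iff $M$ bounds a parallelizable manifold, and identifies $p(S^n)$ with $J(\pi_n(SO))$; the finiteness and cyclicity of $bP_{n+1}$, the isomorphism for $n\neq 2^j-2$, and parts 2) and 3) are left to the surrounding quoted material (the braid (\ref{kervaire-milnor-braid}), the computation (\ref{calculated-pn}) of $P_n$, Tab.~\ref{calculated-h-cobordism-groups-homotopy-sphere}) and ultimately to \cite{KERVAIRE-MILNOR}. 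You go further and reconstruct those steps by the standard surgery route: framed surgery below the middle dimension for 2), $P_{2k+1}=0$ plus the passage from a contractible bounding manifold to an h-cobordism for 3), the identification $bP_{n+1}=\mathrm{im}\,(b)\cong\mathrm{coker}\,(a:\Omega^{afr}_{n+1}\to P_{n+1})$ for cyclicity and (when $n+1=4k$) finiteness, and Browder's theorem for the case $n\equiv 2\ (\mathrm{mod}\ 4)$, $n\neq 2^j-2$, which is indeed the deep input the paper's statement silently relies on. Two substeps should be repaired, though neither affects the architecture. First, for $n\equiv 0\ (\mathrm{mod}\ 4)$ the surgery obstruction $\sigma(M)/8$ of a \emph{closed framed} manifold $M$ is automatically zero, because a closed stably parallelizable manifold has vanishing Pontrjagin numbers and hence vanishing signature; there are no ``signature-calibrated'' closed framed manifolds to connected-sum with (and summing with a non-nullcobordant one would change the framed cobordism class), so the ``absorption modulo $bP_{n+1}$'' clause should simply be replaced by this vanishing. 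Second, in the finiteness argument for $bP_{4k}$ the realizing objects are closed \emph{almost framed} $4k$-manifolds of nonzero signature (your phrase ``closed parallelizable $4k$-manifold bounding a homotopy sphere'' is self-contradictory); their existence, hence $\mathrm{im}\,(a)\neq 0$ in $P_{4k}=\mathbb{Z}$, follows from exactness of (\ref{almost-framed-manifolds-exact-sequence}) together with the finiteness of $\pi^s_{4k-1}$, and the index of $\mathrm{im}\,(a)$ is what produces the Kervaire--Milnor orders in the table.
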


\begin{proof}
For any manifold $M$ with stably trivial normal bundle with framing $\varphi$, there is a homotopy class $p(M,\varphi)$, depending on the framed cobordism class of $(M,\varphi)$. If $p(M)\subset\pi_n^s$ is the set of all $p(M,\varphi)$ where $\varphi$ ranges over framings of the normal bundle, it follows that $0\in p(M)$ iff $M$ bounds a parallelizable manifold. (This is a result by Pontrjagin and Thom.) In particular, the set $p(S^n)$ has an explicit description. More precisely, the Whitehead $J$-homomorphism $:\pi_n(SO(r))\to\pi_{n+r}(S^r)$ is defined by $J:(\alpha:S^n\to SO(r))\mapsto(J(\alpha):S^{n+r}\to S^r)$ that is the Pontrjagin-Thom map of $S^n\subset S^{n+r}$, with the framing $b_\alpha:S^n\times D^r\subset S^{n+r}=S^n\times D^r\bigcup D^{n+1}\times S^{r-1}$, $(x,y)\mapsto(x,\alpha(x)(y))$. Therefore, the map $J(\alpha):S^{n+r}\to S^r$, is obtained by considering $S^{n+r}=(S^n\times D^r)\bigcup(D^{n+1}\times S^{r-1})$ and sending $(x,y)\in D^n\times D^r$ to $\alpha(x)y\in D^r/\partial D^r=S^r$ and $D^{n+1}\times S^{r-1}$ to the collapsed $\partial D^r$. Then $J:\pi_n(SO)=\mathop{\lim}\limits_{\overrightarrow{r}}\pi_n(SO(r))\to\mathop{\lim}\limits_{\overrightarrow{r}}\pi_{n+r}(S^r)=\pi_n^s$ is the stable limit of the maps $J(\alpha)$ as $r\to\infty$, and $p(S^n)$ is the image $J(\pi_n(SO))\subset \pi_n^s=\Omega_n^{fr}$, hence one has that to $\alpha$ there corresponds the framed cobordism class $(S^n,b_\alpha)$.
\end{proof}

The characterization of global solutions of a PDE $
E_k\subseteq J^k_{n}(W)$, in the category ${\mathfrak
M}_\infty$, can be made by means of its integral bordism groups
$\Omega_{p}^{E_k}$, $p\in\{0,1,\dots,n-1\}$. Let us shortly recall some fundamental
definitions and results about.

\begin{definition}
Let $f_i:X_i\to  E_k$, $f_i(X_i)\equiv N_i\subset E_k$,
$i=1,2$, be $p$-dimensional admissible compact closed smooth
integral manifolds of $E_k$. The admissibility
requires that $N_i$ should be contained into some solution $V\subset
E_k$, identified with a $n$-chain, with coefficients in
$A$. Then, we say that they are {\em $E_k$-bordant} if there
exists a $(p+1)$-dimensional smooth manifolds
$f:Y\to E_k$, such that $\partial Y=X_1\sqcup X_2$,
$f|_{X_i}=f_i$, $i=1,2$, and $V\equiv f(Y)\subset E_k$ is an
admissible integral manifold of $E_k$ of dimension
$(p+1)$. We say that $N_i$, $i=1,2$, are {\em $
E_k$-bordant} if there exists a $(p+1)$-dimensional
smooth manifolds $f:Y\to J^k_{m|n}(W)$, such that
$\partial Y=X_1\sqcup X_2$, $f|_{X_i}=f_i$, $i=1,2$, and $V\equiv
f(Y)\subset J^k_{n}(W)$ is an admissible integral manifold of
$J^k_{n}(W)$ of dimension $(p+1)$. Let us denote the
corresponding bordism groups by $\Omega_{p}^{ E_k}$ and
$\Omega_{p}(E_k)$, $p\in\{0,1,\dots,n-1\}$, called respectively {\em$p$-dimensional
integral bordism group} of $E_k$ and {\em$p$-dimensional
quantum bordism group} of $E_k$. Therefore these bordism groups
work, for $p=(n-1)$, in the category of manifolds that are solutions of $E_k$, and $(J^k_{n}(W),E_k)$. Let us emphasize
that singular solutions of $E_k$ are, in general, (piecewise)
smooth manifolds into some prolongation $(
E_k)_{+s}\subset  J^{k+s}_{n}(W)$, where the set, $\Sigma(V)$,
of {\em singular points} of a solution $V$ is a non-where dense
subset of $V$. Here we consider {\em Thom-Boardman singularities},
i.e., $q\in\Sigma(V)$, if $(\pi_{k,0})_*(T_qV)\not\cong T_qV$.
However, in the case where $E_k$ is a differential equation of
finite type, i.e., the symbols $g_{k+s}=0$, $s\ge 0$, then it
is useful to include also in $\Sigma(V)$, discontinuity points,
$q,q'\in V$, with $\pi_{k,0}(q)=\pi_{k,0}(q')=a\in W$, or with
$\pi_{k}(q)=\pi_{k}(q')=p\in M$, where $\pi_k=\pi\circ\pi_(k,0):
J^k_{n}(W)\to M $. We denote such a set by $\Sigma(V)_S$, and, in
such cases we shall talk more precisely of {\em singular boundary}
of $V$, like $(\partial V)_S=\partial V\setminus\Sigma(V)_S$. Such
singular solutions are also called {\em weak solutions}.
\end{definition}

\begin{remark}
Let us emphasize that weak solutions are not simply exotic
solutions, introduced in Mathematical Analysis in order to describe
''non-regular phenomena''. But their importance is more fundamental
in a theory of PDE's. In fact, by means of such solutions we can
give a full algebraic topological characterization of PDE's. This
can be well understood in Theorem \ref{weak-sing-smooth} below,
where it is shown the structural importance played by weak
solutions. In this respect, let us, first, define some notation to
distinguish between some integral bordisms group types.
\end{remark}

\begin{equation}\label{comm-diag3}
\xymatrix{&0\ar[d]&0\ar[d]&0\ar[d]&\\
0\ar[r]&K^{E_k}_{n-1,w/(s,w)}\ar[d]\ar[r]&K^{
E_k}_{n-1,w}\ar[d]\ar[r]&
K^{E_k}_{n-1,s,w}\ar[d]\ar[r]&0\\
0\ar[r]&K^{E_k}_{n-1,s}\ar[d]\ar[r]&  \Omega^{
E_k}_{n-1}\ar[d]\ar[r]&
\Omega^{E_k}_{n-1,s}\ar[d]\ar[r] &0\\
 &0\ar[r]&\Omega^{
E_k}_{n-1,w}\ar[d]\ar[r]& \Omega^{E_k}_{n-1,w}\ar[d] \ar[r]& 0\\
&&0&0&}
\end{equation}

\begin{definition}
Let $\Omega_{n-1}^{E_k}$, (resp. $\Omega_{n-1,s}^{
E_k}$, resp. $\Omega_{n-1,w}^{E_k}$), be the {\em integral
bordism group} for $(n-1)$-dimensional smooth admissible regular
integral manifolds contained in $E_k$, borded by
smooth regular integral manifold-solutions, (resp.
piecewise-smooth or singular solutions, resp. singular-weak
solutions), of $ E_k$.
\end{definition}

\begin{theorem}\label{weak-sing-smooth}
Let $E_k\subset J^k_n(W)$ be a PDE on the fiber bundle $\pi:W\to M$,
with $\dim(W)=m+n$ and $\dim M=n$.

{\em 1)} One has the exact commutative diagram {\em(\ref{comm-diag3})}.
Therefore, one has the canonical isomorphisms:

\begin{equation}
\left\{
\begin{array}{ll}
K^{E_k}_{n-1,w/(s,w)}\cong K^{E_k}_{n-1,s};&
\Omega^{ E_k}_{n-1}/K^{E_k}_{n-1,s}\cong
\Omega^{E_k}_{n-1,s};\\

\Omega^{E_k}_{n-1,s}/K^{
E_k}_{n-1,s,w}\cong\Omega^{ E_k}_{n-1,w};& \Omega^{E_k}_{n-1}/K^{E_k}_{n-1,w}\cong\Omega^{E_k}_{n-1,w}.\\
\end{array}\right.
\end{equation}

If $E_k$ is formally integrable, then one has the following isomorphisms:

\begin{equation}
\Omega^{E_k}_{n-1}\cong\Omega^{
E_\infty}_{n-1}\cong\Omega^{E_\infty}_{n-1,s};\quad
\Omega^{E_k}_{n-1,w}\cong\Omega^{
E_\infty}_{n-1,w}.\end{equation}

{\em 2)} Let $E_k\subset J^k_{n}(W)$ be a quantum super
PDE that is formally integrable, and completely
integrable. We shall assume that the symbols $
g_{k+s}\not=0$, $s=0,1$. (This excludes the case $k=\infty$.) Then
one has the following isomorphisms: $\Omega_{p,s}^{
E_k}\cong\Omega_{p,w}^{E_k}\cong\Omega_{p}(E_k)$, with
$p\in\{0,\dots,n-1\}$.

{\em 3)} Let $E_k\subset J^k_{n}(W)$ be a
PDE, that is formally integrable and completely
integrable. One has the following isomorphisms:
$\Omega_{n-1,w}^{E_k}\cong\Omega_{n-1}(
E_k)\cong\Omega_{n-1,w}^{E_{k+h}}
\cong\Omega_{n-1,w}^{E_\infty}\cong\Omega_{n-1,w} (
E_{k+h})\cong\Omega_{n-1}( E_\infty)$.
\end{theorem}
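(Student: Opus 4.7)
The plan is to derive the commutative diagram (\ref{comm-diag3}) from the three natural surjections
\[
\Omega^{E_k}_{n-1}\twoheadrightarrow\Omega^{E_k}_{n-1,s}\twoheadrightarrow\Omega^{E_k}_{n-1,w},
\]
induced by viewing a smooth regular integral submanifold as a piecewise-smooth (singular) integral submanifold and then as a weak one. By construction, $K^{E_k}_{n-1,s}$ is the kernel of the first surjection, $K^{E_k}_{n-1,s,w}$ the kernel of the second, and $K^{E_k}_{n-1,w}$ the kernel of their composition. The middle and bottom rows and the left and middle columns of (\ref{comm-diag3}) are then tautologically short exact, and a nine-lemma / snake-lemma chase supplies the top row together with the four displayed canonical quotient isomorphisms.

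For the formally integrable portion of statement 1, I would use the surjectivity of each $\pi_{k+h+1,k+h}\colon E_{k+h+1}\to E_{k+h}$. Any admissible $(n-1)$-dimensional integral submanifold of $E_k$ lifts canonically to an admissible integral submanifold of $E_\infty$ and conversely, and the same holds for bording solutions. Thus $\Omega^{E_k}_{n-1}\cong\Omega^{E_\infty}_{n-1}$. On $E_\infty$ the Cartan distribution is involutive, so every Thom-Boardman singular point of an admissible integral submanifold admits a local smooth extension tangent to the distribution; this kills the distinction between smooth and singular bordism at the infinite prolongation, yielding $\Omega^{E_\infty}_{n-1}\cong\Omega^{E_\infty}_{n-1,s}$. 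The weak analogue $\Omega^{E_k}_{n-1,w}\cong\Omega^{E_\infty}_{n-1,w}$ follows by the same lifting argument, now allowing the singular set to persist.

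For part 2 the essential input is the assumption $g_{k+s}\neq 0$ for $s=0,1$. This non-vanishing of the symbol provides enough local freedom inside $E_k$ and $E_{k+1}$ to modify a weak singular solution $V$ on a tubular neighbourhood of its singular locus $\Sigma(V)_S$ so as to produce, after a bounded number of prolongations, a piecewise-smooth (singular) solution with the same boundary. This yields $\Omega^{E_k}_{p,w}\cong\Omega^{E_k}_{p,s}$; the identification with the quantum bordism group $\Omega_p(E_k)$ then comes from complete integrability, since every admissible integral chain in the quantum sense is realized by an actual (possibly singular) solution. The argument is a direct adaptation of Theorem 2.15 of \cite{PRA4}. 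Part 3 is then a synthesis: apply part 2 to each prolongation $E_{k+h}$ (whose symbols remain non-trivial up to the involutivity threshold and stabilize thereafter) to identify the weak and quantum bordism groups at every level, and use the lifting argument of part 1 to conclude that all these groups agree with their counterparts on $E_\infty$.

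The main obstacle — and the technical heart of the whole argument — is the smoothing step underlying part 2: one must show that Thom-Boardman singularities of a weak solution can be resolved, after prolongation, inside a tubular neighbourhood of an admissible space-like Cauchy manifold without altering the boundary. Controlling this resolution is precisely where the hypothesis $g_{k+s}\neq 0$ is indispensable, and the remainder of the theorem is a formal consequence of diagram chasing and prolongation functoriality.
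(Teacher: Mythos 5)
The first thing to keep in mind is that the paper itself does not prove this theorem: its ``proof'' consists of the single line ``See \cite{PRA4, PRA14}'', so the statement is imported from earlier work and there is no in-paper argument against which to check details. Measured against that, the purely formal part of your reconstruction is correct and matches how those sources set things up: taking $K^{E_k}_{n-1,s}$, $K^{E_k}_{n-1,s,w}$, $K^{E_k}_{n-1,w}$ to be the kernels of the natural surjections $\Omega^{E_k}_{n-1}\to\Omega^{E_k}_{n-1,s}\to\Omega^{E_k}_{n-1,w}$ (and of their composite), and $K^{E_k}_{n-1,w/(s,w)}$ to be the kernel of the induced map $K^{E_k}_{n-1,w}\to K^{E_k}_{n-1,s,w}$, the exactness and commutativity of diagram (\ref{comm-diag3}) and the four quotient isomorphisms of part 1 follow by a routine chase. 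That much is essentially tautological, and you present it accurately.

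The genuine gap is that everything beyond the diagram chase is asserted rather than proved, and what is asserted is precisely the content of the results being cited. In the formally integrable part of 1), the lift of admissible integral manifolds and of bording solutions to $E_\infty$ uses surjectivity of the prolongation projections, but the converse direction (that bordism at the level of $E_\infty$ descends to bordism in $E_k$), and the claim $\Omega^{E_\infty}_{n-1}\cong\Omega^{E_\infty}_{n-1,s}$, need arguments you do not supply; your phrase about a Thom--Boardman singular point admitting ``a local smooth extension tangent to the distribution'' is not the right mechanism --- the point used in this framework is that integral manifolds of the involutive, $n$-dimensional Cartan distribution on $E_\infty$ cannot carry Thom--Boardman singularities at all, so singular and smooth bordism coincide there. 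More seriously, in part 2) the entire substance of the theorem is your sentence that $g_{k+s}\neq 0$, $s=0,1$, gives ``enough local freedom'' to replace a weak solution by a singular one with the same boundary and to realize every quantum bordism class by an actual singular solution. That is a restatement of what Theorem 2.15 of \cite{PRA4} (together with the bar-chain-complex identifications of the type $\Omega^{E_k}_{p,s}\otimes_{\mathbb{R}}B\cong\bar H_p(E_k;B)$ used later in the present paper) establishes, not a proof of it: the actual argument exploits the affine structure and positive dimension of the prolonged fibres over points of $E_k$, guaranteed by the non-trivial symbols, to build piecewise-smooth integral bordisms inside $E_k$ between any two admissible integral manifolds that are quantum-bordant. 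You candidly flag this smoothing/realization step as ``the technical heart,'' but flagging it does not supply it; as written, your proposal proves the formal portion of statement 1 and reduces statements 2 and 3 (and the $E_\infty$ comparison) to the very results the paper is quoting.
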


\begin{proof}
See  \cite{PRA4, PRA14}.
\end{proof}

In order to distinguish between manifolds
$V$ representing singular solutions, where $\Sigma(V)$ has no
discontinuities, and integral manifolds where
$\Sigma(V)$ contains discontinuities, we can also consider
''conservation laws'' valued on integral manifolds
$N$ representing the integral bordism classes $[N]_{
E_k}\in\Omega_{p}^{E_k}$.

\begin{definition}
Set

\begin{equation}
\left\{\begin{array}{ll}
\mathfrak{I}(E_k)&\equiv\bigoplus_{p\ge 0 }\frac{\Omega^{p}(E_k)\cap d^{-1}(C\Omega^{p+1}(
E_k))}{d\Omega^{p-1}(E_k)\oplus\{C\Omega^{p}(E_k)\cap d^{-1}(C\Omega^{p+1}(E_k))\}}\\
&\equiv\bigoplus_{p\ge 0 }\mathfrak{I}(E_k)^{p}.\\
\end{array}\right.
\end{equation}
Here $C\Omega^{p}( E_k)$ denotes the space of all
Cartan quantum $p$-forms on $E_k$. Then we define {\em
integral characteristic numbers} of $N$, with $[N]_{
E_k}\in\Omega_{p}^{E_k}$, the numbers $
i[N]\equiv<[N]_{E_k},[\alpha]>\in \mathbb{R}$, for all
$[\alpha]\in\mathfrak{I}(E_k)^{p}$. \end{definition}

Then, one has the following theorems.

\begin{theorem}
Let us assume that $\mathfrak{I}(E_k)^{p}\not=0$. One
has a natural homomorphism:
\begin{equation}
\left\{\begin{array}{l}
{\underline{j}}_{p}:\Omega_{p}^{E_k}\to
Hom(\mathfrak{I}(E_k)^{p};\mathbb{R}),\quad [N]_{
E_k}\mapsto
{\underline{j}}_{p}([N]_{E_k}),\\
{\underline{j}}_{p}([N]_{
E_k})([\alpha])=\int_N\alpha\equiv<[N]_{E_k},[\alpha]>.\\
\end{array}\right.\end{equation}
 Then,
a necessary condition that $N'\in[N]_{E_k}$ is the following:
$ i[N]= i[N']$,  $\forall[\alpha]\in\mathfrak{I}(
E_k)^{p}$. Furthermore, if $N$ is orientable then above condition is sufficient also in order
to say that $N'\in[N]_{E_k}$.
\end{theorem}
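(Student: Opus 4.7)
The plan is to establish the map $\underline{j}_p$ in two stages: first verify that it is well-defined on both the form side and the manifold side, then show that the pairing is a homomorphism, and finally argue that Stokes' theorem gives the necessity of the condition $i[N]=i[N']$, with a de~Rham-type duality providing the sufficiency in the orientable case.

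First I would check well-definedness of the pairing $\langle[N]_{E_k},[\alpha]\rangle\equiv\int_N\alpha$. A representative $\alpha\in\Omega^p(E_k)\cap d^{-1}(C\Omega^{p+1}(E_k))$ is specified modulo (i) exact forms $d\beta$ and (ii) Cartan forms $\gamma\in C\Omega^p(E_k)$ with $d\gamma\in C\Omega^{p+1}(E_k)$. Since $N$ is a closed admissible integral manifold, $\int_N d\beta=\int_{\partial N}\beta=0$, and Cartan forms pull back to zero on integral manifolds, so $\int_N\gamma=0$. Thus the evaluation depends only on $[\alpha]\in\mathfrak{I}(E_k)^p$. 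On the manifold side, if $N,N'$ are admissible and bound a $(p+1)$-dimensional admissible integral manifold $V$ in $E_k$, then Stokes gives
\begin{equation*}
\int_{N'}\alpha-\int_N\alpha=\int_{\partial V}\alpha=\int_V d\alpha,
\end{equation*}
and since $d\alpha\in C\Omega^{p+1}(E_k)$ and $V$ is integral, $d\alpha|_V=0$. Hence $\underline{j}_p([N]_{E_k})$ depends only on the bordism class, and additivity under disjoint union makes $\underline{j}_p$ a homomorphism into $\mathrm{Hom}(\mathfrak{I}(E_k)^p;\mathbb{R})$.

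The necessity of the condition is now immediate: $N'\in[N]_{E_k}$ implies $\underline{j}_p([N]_{E_k})=\underline{j}_p([N']_{E_k})$, whence $i[N]=i[N']$ for every $[\alpha]\in\mathfrak{I}(E_k)^p$. This step requires nothing beyond the previous well-definedness computation.

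The sufficiency in the orientable case is the main obstacle and is where I would invoke the de~Rham-type duality built into the geometric theory of PDEs (Theorem~\ref{weak-sing-smooth} and the results on integral bordism in \cite{PRA4,PRA14}). The idea is that $\mathfrak{I}(E_k)^\bullet$ plays, for $(E_k)$, the role that de~Rham cohomology plays for a smooth manifold: its elements are the closed-modulo-Cartan forms whose integrals over integral cycles produce all integral characteristic numbers. When $N$ is orientable one can represent $[N]_{E_k}$ by a genuine integral cycle; then the pairing $\underline{j}_p$ becomes injective on the torsion-free part because any $N,N'$ with all integral characteristic numbers equal are homologous as integral cycles, and by the bordism-versus-cycle identification of Theorem~\ref{weak-sing-smooth} this homology can be realized by an admissible integral $(p+1)$-manifold $V\subset E_k$ with $\partial V=N\sqcup N'$. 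The delicate point is to show that a closed integral $(p+1)$-chain bounding $N\sqcup N'$ can actually be promoted to a smooth admissible integral submanifold of $E_k$; this uses the complete integrability of $E_k$ (so that through every admissible integral manifold passes a solution) together with the orientability hypothesis, which rules out the $\mathbb{Z}_2$-type obstructions that survive in the non-orientable case. Once this realization step is in hand, the two directions assemble into the stated equivalence.
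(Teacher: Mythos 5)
Your first two steps are sound and match what is actually provable at this level: the pairing is well defined because exact forms integrate to zero over closed $N$ and Cartan forms (and their differentials) pull back to zero on admissible integral manifolds, and Stokes' theorem applied to an admissible integral bordism $V$ with $\partial V=N\sqcup N'$ gives the necessity of $i[N]=i[N']$. Note, however, that the paper itself does not prove this theorem at all: its ``proof'' is a citation to \cite{PRA01, PRA1, PRA4, PRA5}, where both the homomorphism $\underline{j}_p$ and the orientable sufficiency statement are established. So the only meaningful question is whether your argument for sufficiency stands on its own, and there it has a genuine gap.

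The sufficiency direction is exactly the assertion that, for orientable admissible integral manifolds, $\underline{j}_p$ is injective, i.e.\ that equality of \emph{all} integral characteristic numbers forces the existence of an admissible integral $(p+1)$-manifold $V\subset E_k$ with $\partial V=N\sqcup N'$. Your sketch assumes this in two places rather than proving it: first, the claim that $N$ and $N'$ with equal characteristic numbers ``are homologous as integral cycles'' presupposes that the conservation-law pairing detects the relevant (co)homology completely, which is itself a duality theorem about $\mathfrak{I}(E_k)^p$ versus the singular/bar homology of $E_k$ and is not a formal consequence of anything you have written; second, the ``delicate point'' you flag --- promoting a bounding integral chain to a smooth admissible integral submanifold contained in $E_k$ --- is precisely the hard realization result of the cited references (it is where formal/complete integrability, the non-vanishing of the symbols, and the restriction to orientable admissible Cauchy data enter, and where the $\mathbb{Z}_2$-torsion phenomena are actually analyzed rather than merely ``ruled out''). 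Invoking Theorem~\ref{weak-sing-smooth} does not fill this, since that theorem compares different flavors of integral bordism groups with one another and with quantum bordism, not with the functionals $\underline{j}_p$. As written, your argument proves the easy half and restates, rather than establishes, the injectivity half; to complete it you would have to reproduce the bordism-realization and duality machinery of \cite{PRA01, PRA1}.
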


\begin{proof}
See \cite{PRA01, PRA1, PRA4, PRA5}.
\end{proof}

\begin{cor}
Let $E_k\subseteq J^k_n(W)$ be a PDE.
Let us consider admissible $p$-dimensional, $0\le p\le n-1$, orientable integral manifolds. Let $N_1\in[N_2]_{E_k}\in\Omega_{p}^{E_k}$,
then there exists a $(p+1)$-dimensional admissible integral
manifold $V\subset E_k$, such that $\partial
V=N_1\sqcup N_2$, where $V$ is without discontinuities iff the
integral numbers of $N_1$ and $N_2$ coincide. \end{cor}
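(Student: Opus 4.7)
The plan is to deduce this corollary by combining the definition of the integral bordism group $\Omega^{E_k}_{p}$ (which directly yields the existence of some admissible bordism $V$) with the characterization of smoothness provided by the pairing $\underline{j}_p$ of the immediately preceding theorem. The two implications of the ``iff'' are of different nature: one is a Stokes-type computation, the other uses the sufficiency part of the previous theorem together with the orientability hypothesis.

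For the forward direction, suppose $V\subset E_k$ is an admissible $(p+1)$-dimensional integral bordism $\partial V=N_1\sqcup N_2$ without discontinuities, i.e.\ smooth in the sense that $\Sigma(V)_S=\varnothing$. Given any $[\alpha]\in\mathfrak{I}(E_k)^{p}$, the representative $\alpha\in\Omega^{p}(E_k)\cap d^{-1}(C\Omega^{p+1}(E_k))$ satisfies $d\alpha\in C\Omega^{p+1}(E_k)$, hence $d\alpha$ vanishes on the Cartan distribution, and in particular on the integral manifold $V$. Applying Stokes on $V$ (legitimate because $V$ has no discontinuities and $N_1,N_2$ are orientable) gives
\begin{equation*}
i[N_1]-i[N_2]=\int_{N_1}\alpha-\int_{N_2}\alpha=\int_{\partial V}\alpha=\int_V d\alpha=0,
\end{equation*}
which is exactly the equality of integral characteristic numbers.

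For the reverse direction, assume that $i[N_1]=i[N_2]$ for every $[\alpha]\in\mathfrak{I}(E_k)^{p}$. Since $N_1,N_2$ are orientable, the sufficiency statement of the previous theorem produces $N_1\in[N_2]_{E_k}$ in the \emph{smooth} bordism group $\Omega^{E_k}_{p}$, hence a smooth admissible integral bordism $V$, which is by construction without discontinuities. Here the point is that the hypothesis $N_1\in[N_2]_{E_k}\in\Omega^{E_k}_{p}$ gives us only some admissible bordism a priori, while the previous theorem promotes the matching of characteristic numbers into membership in the smooth bordism class; combining the two furnishes the desired smooth $V$.

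The step I expect to be delicate is the sufficiency: one has to invoke in a coordinated way the canonical isomorphisms of Theorem \ref{weak-sing-smooth} (relating $\Omega^{E_k}_{p}$, $\Omega^{E_k}_{p,s}$ and $\Omega^{E_k}_{p,w}$) together with the orientability-based sufficiency in the previous theorem, so that the obstruction to eliminating discontinuities in a given weak bordism is precisely captured by $\underline{j}_p$. The necessity direction is essentially a Stokes calculation and the orientability is used only to make the integrals over $N_1,N_2$ well-defined; the substance of the corollary sits in the reverse implication.
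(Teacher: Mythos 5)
Your argument is the intended one: the paper states this corollary with no proof of its own (the preceding theorem is itself only justified by citation to \cite{PRA01,PRA1,PRA4,PRA5}), and the evident derivation is exactly what you give --- necessity by Stokes on a discontinuity-free integral bordism, using that $d\alpha$ is a Cartan form and hence vanishes on the integral manifold $V$, and sufficiency by the orientability clause of the preceding theorem, which promotes equality of the integral characteristic numbers to membership in the bordism class realized by solutions without discontinuities. The only points left implicit, here as in the paper, are the orientation of $V$ itself (needed for Stokes, with the usual sign convention on $\partial V=N_1\sqcup N_2$) and the reading of $\Omega_{p}^{E_k}$ in the preceding theorem as the discontinuity-free bordism group, which you correctly single out as the delicate step.
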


Above considerations can be generalized to include more
sophisticated solutions of PDEs.

\begin{definition}
Let $E_k\subset J^k_n(W)$ be a PDE and
let $B$ be an algebra. Let us consider the following
chain complex {\em(bar chain complex of $
E_k$)}: $\{\bar C_{\bullet}(E_k;B),\partial\}$, induced
by $B$ on the corresponding bar
chain complex of $E_k$, i.e., $\{\bar C_{\bullet}(
E_k;B),\partial\}$. (See refs.{\cite{PRA01, PRA1, PRA4}}.)
More precisely $\bar C_{p}(E_k;B)$ is the free two-sided
$B$-module of formal linear combinations with coefficients in $B$,
$\sum\lambda_i c_i$, where $c_i$ is a singular $p$-chain
$f:\triangle^p\to E_k$, that extends on a neighborhood
$U\subset\mathbb{R}^{p+1}$, such that $f$ on $U$ is differentiable and
$Tf(\triangle^p)\subset\mathbf{E}^k_{n}$, where $\mathbf{
E}^k_{n}$ is the Cartan distribution of $E_k$.
\end{definition}

\begin{theorem} The homology $\bar
H_{\bullet} (E_k;B)$ of the bar
chain complex of $E_k$ is isomorphic to {\em(closed) bar
integral singular $(p)$-bordism groups}, with coefficients in
$B$, of $E_k$: ${}^B\bar\Omega_{{p} ,s}^{E_k}\cong
\bar H_{q} (E_k;B) \cong (\bar\Omega_{p,s}^{
E_k}\otimes_{\mathbb{R}}B)$, $p\in\{0,1,\dots,n-1\}$. (If $B=\mathbb{R}$ we omit the apex $B$).
The relation between closed bordism and bordism, is given by the
following unnatural isomorphism:\footnote{Note that if $X$ is a
compact space with boundary $\partial X$, the boundary of $X\times
I$, $I\equiv[0,1]\subset\mathbb{R}$, is $\partial(X\times
I)=(X\times\{0\})\bigcup(\partial X\times
I)\bigcup(X\times\{1\})\equiv X_0\bigcup P\bigcup X_1$, with
$X_0\equiv X\times\{0\}$, $X_1\equiv X\times\{1\}$,
$P\equiv\partial X\times I$. One has $\partial P=(\partial
X\times\{0\})\bigcup (\partial X\times\{1\})=\partial
X_0\bigcup\partial X_1$. On the other hand, whether $X$ is closed,
then $\partial(X\times I)=X_0\bigcup X_1$. Furthermore we shall
denote by $[N]_{E_k}$ the equivalence class of the integral
admissible bordism of $N\subset E_k$, even if $N$ is not
necessarily closed. So, if $N$ is closed one has $[N]_{
E_k}\in{}^B\Omega^{E_k}_{\bullet,s}$, and if $N$ is
not closed one has $[N]_{E_k}\in\bar B_{\bullet}(
E_k;B)$.}
\begin{equation}Bor_{\bullet}(E_k;B)\cong
{}^B{\underline{\Omega}}_{\bullet|\bullet,s}(E_k)\bigoplus
Cyc_{\bullet}(E_k;B).\end{equation}
\end{theorem}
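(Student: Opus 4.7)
My plan is to mirror the structure of Theorem~\ref{ma5}, reducing the statement to standard homological algebra once the geometric dictionary between admissible integral manifolds and bar chains has been set up. First I would fix the dictionary at the chain level. A closed $p$-dimensional admissible integral manifold $N\subset E_k$ admits a smooth triangulation; the resulting formal $\mathbb{R}$-linear combination of its $p$-simplices is a cycle in $\bar C_p(E_k;\mathbb{R})$, because each simplex $f:\triangle^p\to E_k$ extends differentiably on a neighborhood of $\triangle^p$ with $Tf(\triangle^p)\subset\mathbf{E}^k_n$ (integrality of $N$ forces this). Conversely, a cycle $z\in\bar Z_p(E_k;\mathbb{R})$ is, by definition, a closed admissible integral singular chain. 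A bar bordism between two such classes is exactly a $(p+1)$-chain in $\bar C_{p+1}(E_k;\mathbb{R})$ whose boundary is the difference, so the assignment descends to a well-defined map
\begin{equation*}
{\underline{\Phi}}:\bar\Omega^{E_k}_{p,s}\longrightarrow \bar H_p(E_k;\mathbb{R}),\quad [N]\mapsto [\mbox{triangulation of }N].
\end{equation*}
Surjectivity and injectivity of ${\underline{\Phi}}$ are forced by the two preceding observations; this gives the isomorphism $\bar\Omega^{E_k}_{p,s}\cong \bar H_p(E_k;\mathbb{R})$.

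Next I would promote this to coefficients in $B$. By construction $\bar C_\bullet(E_k;B)=B\otimes_{\mathbb{R}}\bar C_\bullet(E_k;\mathbb{R})$ as a chain complex of two-sided $B$-modules, and each $\bar C_p(E_k;\mathbb{R})$ is a free $\mathbb{R}$-module (generated by admissible integral singular simplices). Since $\mathbb{R}$ is a field, tensoring with $B$ is exact, hence
\begin{equation*}
\bar H_\bullet(E_k;B)\cong B\otimes_{\mathbb{R}}\bar H_\bullet(E_k;\mathbb{R})\cong \bar\Omega^{E_k}_{p,s}\otimes_{\mathbb{R}} B,
\end{equation*}
which gives the second isomorphism. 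Identifying the left-hand side with ${}^B\bar\Omega^{E_k}_{p,s}$ is then a matter of unwinding definitions: a $B$-weighted formal admissible integral cycle represents the same bordism class as another iff they differ by the $B$-boundary of a $B$-weighted admissible integral bar chain, which is exactly the equivalence relation defining ${}^B\bar\Omega^{E_k}_{p,s}$.

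For the unnatural splitting, I would apply the splitting lemma to the short exact sequence
\begin{equation*}
\xymatrix{0\ar[r]&{}^B{\underline{\Omega}}_{\bullet|\bullet,s}(E_k)\ar[r]&Bor_\bullet(E_k;B)\ar[r]& Cyc_\bullet(E_k;B)\ar[r]&0}
\end{equation*}
extracted as in Theorem~\ref{ma5} from the exact commutative diagram of bar cycles, bar boundaries and bar chains with coefficients in $B$. Because $\bar C_\bullet(E_k;B)$ is a free two-sided $B$-module, its submodule $Cyc_\bullet$ of cycle-classes is projective over $B$, so the sequence splits; the splitting depends on a choice of $B$-module section, which is why the resulting isomorphism $Bor_\bullet(E_k;B)\cong {}^B{\underline{\Omega}}_{\bullet|\bullet,s}(E_k)\bigoplus Cyc_\bullet(E_k;B)$ is not canonical.

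The main obstacle is the very first step: verifying that the geometric notion of admissible integral bordism of $p$-manifolds (with $p\le n-1$) coincides on the nose with the algebraic bar-chain notion of $\partial$-equivalence. One must check that cones and null-bordisms carried out at the singular-chain level can always be realized by admissible integral $(p+1)$-chains in $E_k$, and conversely that any admissible integral $(p+1)$-chain bounded by two closed integral manifolds can be assembled into a geometric bordism. Once this dictionary is established, the algebraic part (Künneth with a field, splitting of free modules) is routine, and the remaining identifications follow as in the proofs indicated in the references \cite{PRA01, PRA1, PRA4, PRA5}.
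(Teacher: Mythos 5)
Your proposal follows essentially the same route as the paper: the paper also establishes the canonical isomorphism ${}^B{\underline{\Omega}}_{\bullet,s}(E_k)\cong\bar H_{\bullet}(E_k;B)$ via the exact commutative diagram relating bar chains, cycles, boundaries and bordism-type quotients (the geometric dictionary you spell out is what it delegates to ``above results'' and the references), and it obtains the unnatural splitting of $0\to{}^B{\underline{\Omega}}_{\bullet,s}(E_k)\to \bar Bor_{\bullet}(E_k;B)\to\bar Cyc_{\bullet}(E_k;B)\to 0$ from the freeness/projectivity of $\bar C_{\bullet}(E_k;B)$, exactly as you do. So the proposal is correct and matches the paper's argument, with your version merely making the chain-level identification and the field-coefficient tensor step more explicit.
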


\begin{proof}
It follows from above results, and the following exact commutative
diagram naturally associated to the bar quantum chain
complex of $E_k$.

\begin{equation}
\xymatrix{&&0\ar[d]&0\ar[d]&&\\
&0\ar[r]&\bar B_{\bullet} (E_k;B)\ar[d]\ar[r]&
          \bar Z_{\bullet} (E_k;B)\ar[d]\ar[r]&
          \bar H_{\bullet} (E_k;B)\ar[r]&0\\
&&\bar C_{\bullet} (E_k;B)\ar[d]\ar@{=}[r]&
          \bar C_{\bullet|\bullet} (E_k;B)\ar[d]&&\\
0\ar[r]&{}^B{\underline{\Omega}}_{\bullet,s}(
E_k)\ar[r]&\bar Bor_{\bullet} (
E_k;B)\ar[d]\ar[r]&\bar Cyc_{\bullet} (E_k;B)\ar[d]\ar[r]&0&\\
&&0&0&&}\end{equation}

where $\bar B_{\bullet} (E_k;B)=\ker(\partial|_{\bar
C_{\bullet|\bullet} (E_k;B)})$, $\bar Z_{\bullet}
(E_k;B)=\IM(\partial|_{\bar C_{\bullet} (
E_k;B)})$, $\bar H_{\bullet} (E_k;B)=\bar
Z_{\bullet} (E_k;B)/\bar B_{\bullet} (
E_k;B)$. Furthermore,
$$\left\{\begin{array}{l}
b\in[a]\in \bar Bor_{\bullet}(E_k;B)\Rightarrow
a-b=\partial c,\quad
                 c\in \bar C_{\bullet}(E_k;B),\\
b\in[a]\in \bar Cyc_{\bullet}(E_k;B)\Rightarrow \partial(a-b)=0,\\
b\in[a]\in {}^A{\underline{\Omega}}_{\bullet,s}(
E_k)\Rightarrow
                 \left\{\begin{array}{l}
                          \partial a=\partial b=0\\
                          a-b=\partial c,\quad c\in \bar C_{\bullet}(E_k;B)\\
                          \end{array}
                                \right\}.\\
\end{array}\right.$$
Furthermore, one has the following canonical isomorphism:
${}^B{\underline{\Omega}}_{\bullet,s}(E_k)\cong \bar
H_{\bullet}(E_k;B)$. As $\bar C_{\bullet}(
E_k;B)$ is a free two-sided projective $B$-module, one has the
unnatural isomorphism: $\bar Bor_{\bullet}(E_k;B)\cong
{}^B{\underline{\Omega}}_{\bullet,s}(E_k)\bigoplus \bar
Cyc_{\bullet}(E_k;B)$.
\end{proof}

The spaces of conservation laws of PDEs, identify {\em
Hopf algebras}. (Hopf
algebras considered here are generalizations of usual Hopf algebras \cite{PRA1}.)

\begin{definition}
The {\em full space of $p$-conservation laws}, (or {\em full
$p$-Hopf algebra}), of $E_k$ is the following one:
${\bf H}_{p}(E_k)\equiv \mathbb{R}^{\Omega_{p}^{E_k}}$. We call {\em full Hopf
algebra}, of $E_k$, the following: $\mathbf{H}_{n-1}(E_\infty)\equiv \mathbb{R}^{\Omega_{n-1}^{
E_\infty}}$.
\end{definition}

\begin{definition} The {\em space of (differential)
conservation laws} of $E_k\subset J^k_n(W)$, is
$\mathfrak{C}ons(E_k)=\mathfrak{I}(
E_\infty)^{n-1}$.
\end{definition}

\begin{theorem}
The full $p$-Hopf algebra of a PDE $
E_k\subset J^k_n(W)$ has a natural structure of
Hopf algebra (in extended sense).
Furthermore, the space of conservation laws of $E_k$ has a
canonical representation in $\mathbf{H}_{n-1}( E_\infty)$.
\end{theorem}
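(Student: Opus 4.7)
The proof has two parts, one for each clause of the statement, and both are essentially structural once the integral bordism groups have been set up.

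First I would establish the Hopf algebra structure on $\mathbf{H}_{p}(E_k)=\mathbb{R}^{\Omega_{p}^{E_k}}$. The key observation is that $\Omega_{p}^{E_k}$ is an abelian group whose addition is induced by disjoint union of admissible integral manifolds (the zero class being $[\varnothing]$, and the inverse of $[N]$ given by orientation reversal, bordant to $-N$ via the cylinder $N\times I$ sitting inside an admissible integral manifold of $E_k$ thanks to the characteristic vector field). Given this, the function space $\mathbb{R}^{\Omega_{p}^{E_k}}$ inherits the standard ``function algebra of a group'' Hopf structure: pointwise multiplication and addition give the commutative algebra; the coproduct $\Delta:\mathbf{H}_p\to\mathbf{H}_p\widehat{\otimes}\mathbf{H}_p$ is $(\Delta f)([N_1],[N_2])=f([N_1]+[N_2])$; the counit is $\epsilon(f)=f(0)=f([\varnothing])$; the antipode is $(Sf)([N])=f(-[N])$. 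The Hopf algebra axioms follow from the abelian group axioms on $\Omega_{p}^{E_k}$. Working in the ``extended'' sense of Hopf algebras as in \cite{PRA1} allows one to bypass issues of topological tensor products when $\Omega_p^{E_k}$ is infinite.

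For the second part I would construct the canonical representation $\rho:\mathfrak{C}ons(E_k)\to\mathbf{H}_{n-1}(E_\infty)$ using the pairing already introduced via the morphism $\underline{j}_{n-1}$. Given $[\alpha]\in\mathfrak{I}(E_\infty)^{n-1}=\mathfrak{C}ons(E_k)$, set
\begin{equation}
\rho([\alpha])([N])=\int_N\alpha=\langle[N]_{E_\infty},[\alpha]\rangle.
\end{equation}
The map is well-defined because $\alpha$ represents a class in the quotient by $d\Omega^{n-2}\oplus(C\Omega^{n-1}\cap d^{-1}C\Omega^{n})$: if $[N_1]_{E_\infty}=[N_2]_{E_\infty}$ with bording $(n)$-chain $V$, then by Stokes $\int_{N_1}\alpha-\int_{N_2}\alpha=\int_V d\alpha$, and $d\alpha\in C\Omega^n(E_\infty)$ vanishes on the admissible integral chain $V$ since the Cartan forms annihilate the Cartan distribution; moreover representatives in $d\Omega^{n-2}$ or in $C\Omega^{n-1}$ contribute zero by Stokes and by the integrality of $N$ respectively. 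Thus $\rho([\alpha])$ is a bona fide element of $\mathbb{R}^{\Omega_{n-1}^{E_\infty}}=\mathbf{H}_{n-1}(E_\infty)$, and linearity in $[\alpha]$ makes $\rho$ a linear representation. After the isomorphism $\Omega^{E_k}_{n-1,w}\cong\Omega_{n-1}(E_\infty)$ of Theorem~\ref{weak-sing-smooth} the same formula gives the canonical map out of $E_k$.

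The only non-routine point is the well-definedness in the second part, and the main obstacle is bookkeeping: one has to verify that all three types of representatives used in the quotient defining $\mathfrak{I}(E_\infty)^{n-1}$ pair to zero with admissible integral bordism chains, and that the Hopf algebra operations (which are defined purely at the level of the bordism group) remain compatible when evaluated on the image of $\rho$. Both facts follow, respectively, from Stokes' theorem plus the defining property of the Cartan distribution on $E_\infty$, and from the functoriality of the ``function algebra of a group'' construction.
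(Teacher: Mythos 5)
Your proposal is correct and follows essentially the route the paper relies on: the paper's own ``proof'' is just a citation to \cite{PRA01, PRA1}, where the Hopf structure is exactly the function-algebra structure on $\mathbb{R}^{\Omega_p^{E_k}}$ induced by the abelian group operation (disjoint union) on the integral bordism group, extended tensor products being the reason for the ``extended sense''. Your second part likewise reproduces the intended construction, namely dualizing the pairing $\underline{j}_{n-1}$, with well-definedness on the quotient $\mathfrak{I}(E_\infty)^{n-1}$ following from Stokes' theorem and the vanishing of Cartan forms on admissible integral manifolds.
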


\begin{proof}
See \cite{PRA01, PRA1}.
\end{proof}

\begin{theorem} Set: $\mathbf{H}_{n-1}(E_k)\equiv
\mathbb{R}^{\Omega^{E_k}_{n-1}}$, $\mathbf{H}_{n-1,s}(
E_k)\equiv \mathbb{R}^{\Omega^{E_k}_{n-1,s}}$, $\mathbf{
H}_{n-1,w}(E_k)\equiv \mathbb{R}^{\Omega^{E_k}_{n-1,w}}$.
One has the exact and commutative diagram reported in {\em(\ref{commutative-exact-diagram-conservation-laws})}, that define the following spaces: $\mathbf{K}^{E_k}_{n-1,w/(s,w)}$, $\mathbf{K}^{E_k}_{n-1,w}$,
$\mathbf{K}^{E_k}_{n-1,s,w}$, $\mathbf{K}^{E_k}_{n-1,s}$.

\begin{equation}\label{commutative-exact-diagram-conservation-laws}
 \xymatrix{&0&0&0&\\
0&\ar[l]\mathbf{K}^{E_k}_{n-1,w/(s,w)}\ar[u]& \ar[l]\mathbf{K}^{E_k}_{n-1,w}\ar[u]& \ar[l]\mathbf{K}^{E_k}_{n-1,s,w}\ar[u]& \ar[l]0\\
0&\ar[l]\mathbf{K}^{E_k}_{n-1,s}\ar[u]& \ar[l]\mathbf{H}_{n-1}(E_k)\ar[u]&
\ar[l]\mathbf{H}_{n-1,s}(E_k)\ar[u]& \ar[l]0\\
 &0\ar[u]&\ar[l]\mathbf{H}_{n-1,w}(E_k)\ar[u]&\ar[l]\mathbf{H}_{n-1,w}(E_k)\ar[u]& \ar[l]0\\
&&0\ar[u]&0\ar[u]&}
\end{equation}

More explicitly, one has the following canonical isomorphisms:
\begin{equation}
\left\{
\begin{array}{l} \mathbf{K}^{
E_k}_{n-1,w/(s,w)}\cong\mathbf{K}^{K^{
E_k}_{n-1,s}};\\
\mathbf{K}^{E_k}_{n-1,w}/\mathbf{K}^{
E_k}_{n-1,s,w}\cong\mathbf{K}^{K^{E_k}_{n-1,w/(s,w)}};\\
\mathbf{H}_{n-1}(E_k)/\mathbf{H}_{n-1,s}(E_k)\cong\mathbf{
K}^{E_k}_{n-1,s};\\
\mathbf{H}_{n-1}(E_k)/\mathbf{H}_{n-1,w}(E_k)\cong\mathbf{
K}^{E_k}_{n-1,w} \\
\cong\mathbf{H}_{n-1,s}(E_k)/\mathbf{H}_{n-1,w}(
E_k)\cong\mathbf{K}^{E_k}_{n-1,s,w}.\\
\end{array}\right.\end{equation}

Furthermore, under the same hypotheses of Theorem 4.44(2) one has
the following canonical isomorphism: $\mathbf{
H}_{n-1,s}(E_k)\cong\mathbf{H}_{n-1,w}(E_k)$.
Furthermore, we can represent differential conservation laws of
$E_k$ in $\mathbf{H}_{n-1,w}(E_k)$.
\end{theorem}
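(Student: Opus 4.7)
The plan is to obtain the entire statement by dualizing the previously established exact commutative diagram of integral bordism groups. Since $\mathbf{H}_{n-1}(E_k) = \mathbb{R}^{\Omega^{E_k}_{n-1}}$ and its siblings are obtained by applying the functor $\mathrm{Hom}_{\mathbb{Z}}(-,\mathbb{R})$ (which here agrees with the paper's notation $\mathbb{R}^{(-)}$ since these bordism groups are abelian), the argument reduces to a purely homological manipulation. The essential input is that $\mathbb{R}$ is a divisible abelian group, hence an injective $\mathbb{Z}$-module, so $\mathrm{Hom}_{\mathbb{Z}}(-,\mathbb{R})$ is an exact contravariant functor from abelian groups to abelian groups.

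First I would take the exact commutative diagram of Theorem \ref{weak-sing-smooth}(1), whose rows and columns are short exact sequences of abelian groups, and apply $\mathrm{Hom}_{\mathbb{Z}}(-,\mathbb{R})$ to it. Exactness is preserved, and the arrows are reversed, producing the diagram (\ref{commutative-exact-diagram-conservation-laws}) with the three vector spaces $\mathbf{H}_{n-1}(E_k)$, $\mathbf{H}_{n-1,s}(E_k)$, $\mathbf{H}_{n-1,w}(E_k)$ sitting in the central cross. The dualized kernels/cokernels are then defined by the diagram itself: $\mathbf{K}^{E_k}_{n-1,s}$ is the kernel of the surjection $\mathbf{H}_{n-1}(E_k)\twoheadrightarrow\mathbf{H}_{n-1,s}(E_k)$, and similarly for the other $\mathbf{K}$'s. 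The claimed isomorphisms
\[
\mathbf{H}_{n-1}(E_k)/\mathbf{H}_{n-1,s}(E_k)\cong \mathbf{K}^{E_k}_{n-1,s},\quad \mathbf{H}_{n-1}(E_k)/\mathbf{H}_{n-1,w}(E_k)\cong \mathbf{K}^{E_k}_{n-1,w},
\]
and the parallel ones in the top row, are then immediate from first and second isomorphism theorems applied to the rows and columns of the dualized diagram, coupled with the analogous isomorphisms already available for the bordism side (again dualized via exactness of $\mathrm{Hom}_{\mathbb{Z}}(-,\mathbb{R})$).

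For the special assertion that, under the hypotheses of Theorem \ref{weak-sing-smooth}(2), one has $\mathbf{H}_{n-1,s}(E_k)\cong\mathbf{H}_{n-1,w}(E_k)$, I would invoke directly the isomorphism $\Omega^{E_k}_{p,s}\cong\Omega^{E_k}_{p,w}\cong\Omega_p(E_k)$ established there for $p\in\{0,\dots,n-1\}$; applying $\mathrm{Hom}_{\mathbb{Z}}(-,\mathbb{R})$ yields the stated isomorphism of Hopf algebras in extended sense. Finally, for the representability of differential conservation laws in $\mathbf{H}_{n-1,w}(E_k)$, I would factor the natural pairing $\underline{j}_{n-1}:\Omega^{E_k}_{n-1}\to\mathrm{Hom}(\mathfrak{I}(E_k)^{n-1};\mathbb{R})$ already constructed in the previous theorem through the quotient $\Omega^{E_k}_{n-1}\twoheadrightarrow\Omega^{E_k}_{n-1,w}$: a class $[\alpha]\in\mathfrak{C}ons(E_k)=\mathfrak{I}(E_\infty)^{n-1}$ evaluates by integration on any admissible $(n-1)$-integral manifold, and Stokes' theorem together with the defining relation $d\alpha\in C\Omega^n(E_\infty)$ forces equality of these integrals on any two weakly bordant representatives. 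Hence $[\alpha]$ descends to an element of $\mathbb{R}^{\Omega^{E_k}_{n-1,w}}=\mathbf{H}_{n-1,w}(E_k)$.

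The only genuinely delicate point is the last one, namely checking that the Cartan integrability condition $d\alpha\in C\Omega^n(E_\infty)$ really ensures that $\int_{N_1}\alpha=\int_{N_2}\alpha$ for classes $[N_1]_{E_k}=[N_2]_{E_k}\in\Omega^{E_k}_{n-1,w}$, even when the bording weak solution $V$ carries a Thom--Boardman singular locus $\Sigma(V)$. This is handled by the standard removability argument for the integration of Cartan forms on weak solutions, already used by the author in \cite{PRA01, PRA1, PRA4, PRA5}: $\Sigma(V)$ is nowhere dense, $\alpha|_V$ extends across it because $\alpha$ is pulled back from $E_\infty$ along the Cartan distribution, and the boundary contribution reduces to $\int_{N_1}\alpha-\int_{N_2}\alpha$. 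Every other step of the proof is formal homological algebra applied to already-existing exact sequences.
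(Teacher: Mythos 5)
Your overall route is the paper's route: the author's proof is literally the one-line remark that the diagram follows ``by duality'' from the exact commutative diagram (\ref{comm-diag3}) of Theorem \ref{weak-sing-smooth}, and your last two points (deducing $\mathbf{H}_{n-1,s}(E_k)\cong\mathbf{H}_{n-1,w}(E_k)$ by functoriality from $\Omega^{E_k}_{n-1,s}\cong\Omega^{E_k}_{n-1,w}$, and representing a conservation law in $\mathbf{H}_{n-1,w}(E_k)$ by letting the integration pairing descend to weak bordism classes) are consistent with what the paper intends. However, there is a genuine gap in how you implement the duality: the identification of $\mathbb{R}^{\Omega}$ with $\mathrm{Hom}_{\mathbb{Z}}(\Omega,\mathbb{R})$ is not correct, and the parenthetical justification ``since these bordism groups are abelian'' does not make it so. In this paper $\mathbb{R}^{\Omega^{E_k}_{n-1}}$ denotes the algebra of \emph{all} $\mathbb{R}$-valued functions on the bordism group --- that is exactly what carries the ``Hopf algebra in extended sense'' structure and what enters the later crystal-obstruction machinery --- whereas the integral bordism groups occurring here are largely torsion (e.g.\ the groups computed for $(RF)$ are sums of $\mathbb{Z}_2$'s), so $\mathrm{Hom}_{\mathbb{Z}}(-,\mathbb{R})$ annihilates them. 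Your argument therefore establishes exactness of a different diagram, one whose entries are frequently zero precisely in the cases of interest.

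With the correct functor $G\mapsto\mathbb{R}^G$, exactness of the dualized rows at the middle term is \emph{not} a formal consequence of the injectivity of $\mathbb{R}$ as a $\mathbb{Z}$-module: dualizing $0\to K^{E_k}_{n-1,s}\to\Omega^{E_k}_{n-1}\to\Omega^{E_k}_{n-1,s}\to 0$ gives a pullback inclusion $\mathbf{H}_{n-1,s}(E_k)\hookrightarrow\mathbf{H}_{n-1}(E_k)$ whose image (functions constant on the cosets of $K^{E_k}_{n-1,s}$) differs from the kernel of the restriction map $\mathbb{R}^{\Omega^{E_k}_{n-1}}\to\mathbb{R}^{K^{E_k}_{n-1,s}}$ (functions vanishing on $K^{E_k}_{n-1,s}$). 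The statement itself resolves this by letting the diagram (\ref{commutative-exact-diagram-conservation-laws}) \emph{define} the spaces $\mathbf{K}^{E_k}_{n-1,s}$, $\mathbf{K}^{E_k}_{n-1,w}$, $\mathbf{K}^{E_k}_{n-1,s,w}$, $\mathbf{K}^{E_k}_{n-1,w/(s,w)}$ as the appropriate cokernels of the pullback inclusions, after which the displayed quotient isomorphisms are immediate; note also that your description of $\mathbf{K}^{E_k}_{n-1,s}$ as ``the kernel of the surjection $\mathbf{H}_{n-1}(E_k)\twoheadrightarrow\mathbf{H}_{n-1,s}(E_k)$'' has the arrow backwards, since dualization turns the surjection of bordism groups into an inclusion of function spaces, and $\mathbf{K}^{E_k}_{n-1,s}$ is its cokernel. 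To repair the proof, replace the $\mathrm{Hom}$-exactness argument by this definitional reading of the dual diagram (or work consistently with the function-space functor and define the $\mathbf{K}$'s as the indicated quotients); the rest of your argument, including the Stokes-type verification that a conservation law yields a well-defined element of $\mathbb{R}^{\Omega^{E_k}_{n-1,w}}$, can then be kept as is.
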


\begin{proof}
The proof follows directly for duality from the exact commutative diagram (\ref{comm-diag3}).
\end{proof}

\begin{definition}
We define {\em crystal obstruction} of $E_k$ the following quotient
algebra: $ cry(E_k)\equiv
\mathbf{H}_{n}((E_k)_\infty)/\mathbb{R}^{\Omega_{n}}$. We say that $E_k$ is a
{\em$0$-crystal PDE} if
$cry(E_k)=0$.
\end{definition}
\begin{remark}
An extended $0$-crystal equation $E_k\subset
J^k_{n}(W)$ does not necessitate to be a $0$-crystal PDE. In fact
$E_k$ is an extended $0$-crystal PDE if $\Omega_{n,w}^{E_k}=0$.
This does not necessarily imply that $\Omega_{n}^{E_k}=0$.
\end{remark}

\begin{cor}
Let $E_k\subset J^k_n(W)$ be a $0$-crystal PDE. Let $N_0, N_1\subset
E_k$ be two closed compact $(n-1)$-dimensional admissible integral
manifolds of $E_k$ such that $X\equiv N_0\sqcup
N_1\in[0]\in\Omega_{n}$. Then there exists a smooth solution
$V\subset E_k$ such that $\partial V=X$. (See also \cite{PRA14, PRA8, PRA9, PRA10, PRA11}.)
\end{cor}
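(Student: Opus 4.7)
The plan is to dualize the $0$-crystal hypothesis back to the bordism-group side and use the resulting identification to lift the classical null-bordism of $X$ to a smooth solution of $E_k$. First, I would unpack the condition $cry(E_k)=0$: by definition this is $\mathbf{H}_n((E_k)_\infty)/\mathbb{R}^{\Omega_n}=0$, equivalently $\mathbf{H}_n((E_k)_\infty)\cong\mathbb{R}^{\Omega_n}$ as $\mathbb{R}$-algebras. Since $\mathbf{H}_n((E_k)_\infty)=\mathbb{R}^{\Omega^{(E_k)_\infty}_n}$ by definition, the standard duality between algebras of functions and their index sets yields a compatible isomorphism of bordism groups $\Omega^{(E_k)_\infty}_n\cong\Omega_n$, where the isomorphism is induced by the natural forgetful morphism from integral/admissible bordism to classical smooth bordism.

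Next, I would transport this identification down from $(E_k)_\infty$ to $E_k$. Assuming $E_k$ is formally and completely integrable (the standing hypothesis throughout the paper), Theorem \ref{weak-sing-smooth} provides the isomorphisms $\Omega^{E_k}_{n-1}\cong\Omega^{E_\infty}_{n-1}\cong\Omega^{E_\infty}_{n-1,s}$ and $\Omega^{E_k}_{n-1,w}\cong\Omega^{E_\infty}_{n-1,w}$. Combining these with the $0$-crystal identification, the kernels $K^{E_k}_{n-1,s}$ and $K^{E_k}_{n-1,w}$ appearing in diagram (\ref{comm-diag3}) collapse, and the composite $\Omega^{E_k}_{n-1}\to\Omega^{E_k}_{n-1,s}\to\Omega_{n-1}$ becomes an isomorphism onto its classical image. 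By the dual diagram (\ref{commutative-exact-diagram-conservation-laws}), this is precisely the bordism-level content of $0$-crystallinity.

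Finally, the hypothesis $[X]=[N_0\sqcup N_1]=0$ in the classical bordism group forces, under the isomorphism of the previous step, $[X]_{E_k}=0$ in $\Omega^{E_k}_{n-1}$. By the very definition of this integral bordism group, the vanishing of $[X]_{E_k}$ exhibits a compact smooth admissible integral manifold $V\subset E_k$ of the solution dimension with $\partial V=N_0\sqcup N_1$, i.e.\ the smooth solution claimed. The main obstacle I anticipate is carefully justifying that the $0$-crystal hypothesis, which is phrased at the level of the full Hopf algebra $\mathbf{H}_n$, actually kills the kernel $K^{E_k}_{n-1,s,w}$ and not merely $K^{E_k}_{n-1,w}$; this step is essential so that the resulting bordism $V$ can be taken genuinely smooth rather than only weak or singular, and it is the only place where a delicate interplay between Theorem \ref{weak-sing-smooth}(3) and the formal integrability of $E_k$ is really needed.
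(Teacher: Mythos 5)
Your overall strategy is sound, and in fact it is essentially the only argument available: the paper states this corollary without proof, deferring to \cite{PRA14, PRA8, PRA9, PRA10, PRA11}, so your write-up is the natural unwinding of the definition of a $0$-crystal PDE. The operative chain is exactly the one you give: $cry(E_k)=0$ means that the pullback $\mathbb{R}^{\Omega_{n}}\to\mathbf{H}_{n}((E_k)_\infty)=\mathbb{R}^{\Omega_{n}^{(E_k)_\infty}}$ along the canonical ``forget the equation'' morphism is surjective, which for function algebras on sets is equivalent to injectivity of that canonical map into the classical bordism group; combining this with $\Omega^{E_k}_{n-1}\cong\Omega^{E_\infty}_{n-1}$ from Theorem \ref{weak-sing-smooth} (formal and complete integrability must be assumed, as you note) and with the hypothesis that $X$ is classically null-bordant, one gets $[X]_{E_k}=0$ in the smooth integral bordism group, which by its very definition produces the smooth solution $V$ with $\partial V=X$. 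Three points should be tightened. First, vanishing of the quotient gives you injectivity of the canonical map, not by itself an isomorphism of bordism groups; an abstract algebra isomorphism $\mathbb{R}^{A}\cong\mathbb{R}^{B}$ would not identify $A$ with $B$, and injectivity of the specific induced map is all that your last step uses, so claim only that. Second, your anticipated ``main obstacle'' is a non-issue, and the detour through the kernels of diagram (\ref{comm-diag3}) is unnecessary: smoothness of $V$ is automatic because the crystal obstruction is phrased through $\mathbf{H}_{n}((E_k)_\infty)=\mathbb{R}^{\Omega_{n}^{(E_k)_\infty}}$, i.e.\ through the bordism group whose bording manifolds are already smooth regular solutions, and Theorem \ref{weak-sing-smooth} transports this to $E_k$; no upgrading from weak or singular bordisms is required, and $K^{E_k}_{n-1,s,w}$ (the kernel of the singular-to-weak map) is in any case irrelevant to such an upgrade. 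Third, keep the indices consistent: the statement and the definition of $cry(E_k)$ carry the paper's own $n$ versus $n-1$ slippage (the $N_i$ are $(n-1)$-dimensional, so the relevant groups are $\Omega_{n-1}$, $\Omega^{E_k}_{n-1}$), and your proof silently switches between $\Omega_{n}^{(E_k)_\infty}$ and $\Omega^{E_k}_{n-1}$; fixing one convention throughout is needed before the argument reads as airtight.
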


Let us consider, now, the interaction between surgery and global
solutions in PDE's of the category $\mathfrak{M}_\infty$. Since the
surgery is a proceeding to obtain manifolds starting from other
ones, or eventually from $\varnothing$, in any theory of PDE's, where we are
interested to characterize nontrivial solutions, surgery is a
fundamental tool to consider. We have just seen that integral
bordism groups are the main structures able to characterize global
solutions of PDE's. On the other hand surgery is strictly related to
bordism groups, as it is well known in algebraic topology.
Therefore, in this section, we shall investigate as integral surgery
interacts with integral bordism groups.

\begin{definition}
Let $\pi:W\to M$ be a smooth fiber bundle of dimension $m+n$ over
a $n$-dimensional manifold $M$. Let $E_k\subset J^k_n(W)$ be a PDE
of order $k$ for $n$-dimensional submanifolds of $W$. Let
$N\subset E_k$ be an admissible integral manifold of dimension
$p\in\{0,1,\cdots,n-1\}$. Therefore, there exists a solution
$V\subset E_k$ such that $N\subset V$. An {\em admissible integral
$i$-surgery}, $0\le i\le n-1$, on $N$ is the procedure of
constructing a new $p$-dimensional admissible integral manifold
$N'$:
\begin{equation}
N'\equiv\overline{N\setminus S^i\times
D^{p-1-i}}\bigcup_{S^i\times S^{p-2-i}}D^{i+1}\times S^{p-2-i}
\end{equation}

such that $D^{i+1}\times S^{p-2-i}\subset V$. Here $\overline{Y}$
is the closure of the topological subspace $Y\subset X$, i.e., the
intersection of all closed subsets $Z\subset X$, with $Y\subset
Z$.
\end{definition}

\begin{theorem}\label{surgery-sol1}
Let $N_1,N_0\subset E_k$ be two integral compact (non-necessarily
closed) admissible $p$-dimensional submanifolds of the PDE
$E_k\subset J^k_n(W)$, such that there is an admissible
$(p+1)$-dimensional integral manifold $V\subset E_k$, such that
$\partial V=N_0\sqcup N_1$. Then it is possible to identify an integral
admissible manifold $N'_1$, obtained from $N_1$ by means of an
integral $i$-surgery, iff $N'_1$ is integral bording with $N_0$,
i.e., $N'_1\in[N_0]_{E_k}$.
\end{theorem}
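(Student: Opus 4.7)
The plan is to prove both implications via the interplay between the handle decomposition of bordisms (Theorem \ref{handle-decomposition-bordisms}) and the admissibility requirement that the cells involved in an integral surgery lie inside an admissible integral manifold of $E_k$. In both directions the topological argument is standard; what must be controlled is that the relevant handles inherit an admissible integral structure, i.e.\ that their tangent planes lie in the Cartan distribution $\mathbf{E}^k_n$ of $E_k$.

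For the forward implication, I would assume $N'_1$ is obtained from $N_1$ by an integral $i$-surgery, so by definition there is an embedded handle $D^{i+1}\times D^{p-i}$ sitting inside an admissible integral manifold, with attaching region $S^i\times D^{p-i}\subset N_1$. The construction is to take the given bordism $V$ with $\partial V=N_0\sqcup N_1$ and attach the $(p+1)$-dimensional trace handle $h^{i+1}=D^{i+1}\times D^{p-i}$ to $V$ along $S^i\times D^{p-i}\subset N_1\subset\partial V$. This produces a compact $(p+1)$-dimensional manifold $V'=V\bigcup_{S^i\times D^{p-i}}h^{i+1}\subset E_k$ with $\partial V'=N_0\sqcup N'_1$. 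Since the handle was already admissible by hypothesis, $V'$ is admissible integral, witnessing $N'_1\in[N_0]_{E_k}$.

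For the converse, I would suppose $N'_1\in[N_0]_{E_k}$, so there exists an admissible integral $V_0\subset E_k$ with $\partial V_0=N_0\sqcup N'_1$. Gluing along $N_0$, the manifold $U=V\bigcup_{N_0}(-V_0)$ is an admissible integral $(p+1)$-dimensional bordism in $E_k$ from $N_1$ to $N'_1$. Applying Theorem \ref{handle-decomposition-bordisms} to $U$, one obtains a finite decomposition into elementary bordisms, each realized by the attachment of a handle of some index $i_s$. By the standard handle/surgery correspondence recalled earlier in the excerpt, every elementary piece realizes an integral $i_s$-surgery between consecutive cross-sections; composing these surgeries recovers $N'_1$ from $N_1$. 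In the situation where $U$ carries only one handle, this is precisely a single integral $i$-surgery.

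The main obstacle in both directions is the verification of admissibility throughout the handle operations, i.e.\ that one can arrange the attaching maps, cores and cocores so that the tangent planes along the new manifold remain in $\mathbf{E}^k_n$. In the forward direction this is built into the definition of integral surgery; in the converse one must argue that Theorem \ref{handle-decomposition-bordisms}, applied internally to admissible integral submanifolds of $E_k$ rather than to arbitrary smooth manifolds, still furnishes a handle decomposition whose handles can be realized admissibly inside $E_k$. This is the step where the hypothesis that $E_k$ is formally (and completely) integrable, used throughout Section~2 to guarantee the existence of local solutions through any admissible Cauchy datum, enters: each elementary handle can be extended by a local solution of $E_k$, keeping the whole construction inside the category $\mathfrak{M}_\infty$ of admissible integral manifolds of $E_k$.
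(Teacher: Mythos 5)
Your forward implication is essentially the paper's own argument: the trace of the surgery ($N_1\times I$ together with the handle $D^{i+1}\times D^{p-1-i}$, all contained in a solution passing through $N_1$, which is what the definition of an integral $i$-surgery supplies) is an admissible integral bordism from $N_1$ to $N'_1$, and composing it with the given bordism $V$ yields $N'_1\in[N_0]_{E_k}$. Whether you phrase this as attaching the handle directly to $V$ along $N_1\subset\partial V$ or as transitivity of integral bordism through $N_1$ (as the paper does) is immaterial.

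The converse is where you depart from the paper and where a genuine gap appears. You glue $U=V\bigcup_{N_0}(-V_0)$ and invoke Theorem \ref{handle-decomposition-bordisms} to decompose $U$ into elementary handle attachments, and then assert that each handle ``can be realized admissibly inside $E_k$'' by extending with local solutions. But Theorem \ref{handle-decomposition-bordisms} is a statement in the smooth category $\mathfrak{M}_\infty$: it gives no control on whether the cores, cocores and attaching regions of the handles it produces are integral, i.e.\ tangent to the Cartan distribution $\mathbf{E}^k_n$. Formal and complete integrability guarantee local solutions through admissible Cauchy data, but they do not show that the specific cells of an arbitrary smooth handle decomposition of $U$ can be arranged (or isotoped) to be admissible integral submanifolds of $E_k$ --- and that is exactly the content your converse would need; as written it is asserted, not proved. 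In addition, your construction would at best exhibit $N'_1$ as the result of a finite \emph{sequence} of integral surgeries on $N_1$, not of the single integral $i$-surgery appearing in the statement. The paper's treatment of this direction is far more economical and avoids handle decompositions altogether: if $N'_1\in[N_0]_{E_k}$, then $N'_1$ is in particular an admissible integral manifold, i.e.\ it is contained in a solution passing through $N_1$, and this containment is precisely the condition under which the surgery producing $N'_1$ is an \emph{integral} one; the equivalence is then closed by transitivity of integral bordism, using the explicit trace bordism $Y$ (a cylinder union the handle $D^{i+1}\times D^{p-1-i}$) together with the given $V$.
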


\begin{proof}
As it is well known $N'_1$ is bording with $N_0$, i.e., there
exists a $(p+1)$-dimensional manifold $Y$, with $\partial Y=N_0\sqcup
N'_1$. More precisely we can take $Y=N_0\times I\bigcup
D^{i+1}\times D^{p-1-i}$. By the way, in order $N'_1$ should be
integral admissible, it is necessary that should be contained into
a solution passing from $N_1$. Then $N'_1$ is integral bording
with $N_1$, hence it is also integral bording with $N_0$.
\end{proof}

\begin{theorem}[Integral h-cobordism in Ricci flow PDE]\label{integral-h-cobordism-in-ricci-flow-pde}

The generalized Poincar\'e conjecture, for any dimension $n\ge 1$ is true, i.e., any $n$-dimensional homotopy sphere $M$ is homeomorphic to $S^n$: $M\approx S^n$.

For $1\le n\le 6$, $n\not=4$, one has also that $M$ is diffeomorphic to $S^n$: $M\cong S^n$. But for $n\ge 7$,  it does not necessitate that $M$ is diffeomorphic to $S^n$. This happens when the Ricci flow equation, under the {\em homotopy equivalence full admissibility hypothesis}, (see below for definition), becomes a $0$-crystal.

Moreover, under the {\em sphere full admissibility hypothesis}, the Ricci flow equation becomes a $0$-crystal in any dimension $n\ge 1$.
\end{theorem}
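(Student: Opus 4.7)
The plan is to lift the bordism-level identity $[M]_{(RF)}=[S^n]_{(RF)}$ into a geometric h-cobordism living inside $(RF)$, and then feed it into the classical h-cobordism/surgery machinery collected in Sections 2--3 (Freedman, Smale, Kervaire--Milnor). First I would represent a homotopy sphere $M$ as a space-like Cauchy manifold $N_0\subset (RF)_{t_0}$, and the standard $(S^n,\gamma')$ as another Cauchy manifold $N_1\subset(RF)_{t_1}$. The singular bordism group computation for homotopy spheres gives $\Omega^{(RF)}_{n,s}\cong\Omega_n\oplus\Z_2$, and both $N_0$ and $N_1$ lie in the same class there: the $\Omega_n$ component agrees because any homotopy sphere is almost framed and stably parallelizable, and the extra $\Z_2$ is determined by the connected component of the base $M\approxeq S^n$. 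Hence a singular solution $V\subset(RF)$ with $\partial V=N_0\sqcup N_1$ exists.

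Next I would invoke Theorem \ref{tunnel-effect-pde} to equip $V$ with an admissible Morse function $f:V\to[a,b]$ and the associated handle decomposition from Theorem \ref{handle-decomposition-bordisms}. Applying the surgery-through-solutions result (Theorem \ref{surgery-sol1}), I can successively remove middle-dimensional handles while staying inside $(RF)$, upgrading $V$ to an admissible bordism realising a true h-cobordism: since both ends are homotopy spheres, killing the relative homotopy $\pi_*(V,N_i)$ is possible by integral surgeries below the middle dimension, and the dual handles handle the other end. With $V$ an $(n+1)$-dimensional h-cobordism in hand, Smale's theorem gives $V\cong N_0\times I$ for $n\geq 5$, Freedman's theorem delivers the homeomorphism $M\thickapprox S^n$ in the exceptional dimension $n=4$, and for $n=1,2,3$ the conclusion is the classical Poincar\'e theorem together with the uniqueness of smooth structures (the three-dimensional case being the one established in Part I \cite{PRA16}).

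For the differentiable statement in the range $1\leq n\leq 6$, $n\neq 4$, the Kervaire--Milnor table (Tab. \ref{calculated-h-cobordism-groups-homotopy-sphere}) gives $\Theta_n=0$, so the topological h-cobordism is automatically smoothly trivial and $M\cong S^n$. For $n\geq 7$ the very same $V$ delivers only a topological product because the smoothing obstruction lies in $\Theta_n\neq 0$, which is the precise $\Z_2$-factor in $\Omega^{(RF)}_{n,s}$ that distinguishes $(M,\gamma)$ from $(S^n,\gamma')$ at the smooth level. At this point I would formalise the two admissibility hypotheses: under \emph{homotopy equivalence full admissibility} one allows every $(M,\gamma)$ with $M\approxeq S^n$ as Cauchy datum, whereupon the comparison $\mathbf{H}_n((RF)_\infty)\to\R^{\Omega_n}$ is surjective (every smooth $\Omega_n$-class is represented, the extra $\Z_2$ being absorbed by the homotopy-sphere Cauchy data), hence $\mathrm{cry}(RF)=0$ and $(RF)$ is a $0$-crystal --- yet the endpoints of a smooth bordism remain only homeomorphic. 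Under \emph{sphere full admissibility} one further restricts the admissible Cauchy data to actual $S^n$'s, killing the $\Theta_n/bP_{n+1}$ ambiguity via the representability criteria of Theorem \ref{surgery-exact-sequence}, forcing the bord to be a genuine diffeomorphism and yielding the $0$-crystal property in every $n\geq 1$.

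The main obstacle I expect is step two: controlling the handle decomposition of $V$ so that the indices agree with the homotopy type of an h-cobordism rather than a bare bordism, while simultaneously ensuring every intermediate surgery remains integrally admissible in $(RF)$. This is where the interplay between Theorem \ref{change-topology-and-smooth-solutions} (no zero of the characteristic vector field on a smooth solution) and Theorem \ref{tunnel-effect-pde} (critical points record the attaching handles) has to be handled carefully: a generic admissible Morse function need not be Morse--Smale in the sense of Lemma \ref{morse-smale-functions}, and the rearrangement theorem has to be carried out through surgeries that respect the Cartan distribution. Once this technical heart is in place, the remaining content of the theorem is the algebraic extraction of $\mathrm{cry}(RF)=0$ from the two bordism-group identifications above, and a direct appeal to the Smale/Freedman h-cobordism theorems.
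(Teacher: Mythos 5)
Your central step---taking the singular solution $V$ with $\partial V=N_0\sqcup N_1$ and ``upgrading'' it, by integral surgeries on the interior, to an h-cobordism to which Smale's theorem is then applied---is not the paper's route and cannot work as stated. For $n\ge 5$ a homotopy sphere $M$ is h-cobordant to $S^n$ if and only if $[M]=0\in\Theta_n$, i.e.\ if and only if $M$ is already diffeomorphic to $S^n$; so for an exotic $M$ no such smooth h-cobordism exists, and if your surgery procedure succeeded it would prove (via Smale) that every homotopy sphere of dimension $\ge 5$ is diffeomorphic to $S^n$, contradicting $\Theta_7\cong\mathbb{Z}_{28}$. The obstruction you flag as a ``technical heart'' (rearranging handles while keeping every intermediate surgery admissible and middle-dimensional homotopy killable) is exactly the surgery obstruction that makes this impossible, and Theorem \ref{surgery-sol1} gives no help: it concerns $i$-surgeries on $p$-dimensional Cauchy data $N\subset V$ with $p\le n-1$, not modifications of the $(n+1)$-dimensional solution $V$ itself. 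The paper keeps the two regimes separate instead of merging them: by Theorem \ref{change-topology-and-smooth-solutions} a \emph{smooth} solution has a nowhere-zero characteristic field, hence by Theorem \ref{tunnel-effect-pde}(C) is automatically a product $N_0\times I$ and forces $M\cong S^n$, while exotic spheres are bordant to $S^n$ only through \emph{singular} (weak/topological) solutions, which is why one gets homeomorphism but not diffeomorphism. The bordism itself is produced not from the $\Omega_n\oplus\mathbb{Z}_2$ computation alone but from the conservation-law machinery (the lemmas on integral characteristic numbers, injectivity of $j_n$ for orientable data, and the explicit nonzero conservation laws of $(RF)$), which your proposal does not use.

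Two further points are off. First, the $\mathbb{Z}_2$ summand in $\Omega^{(RF)}_{n,s}\cong\Omega_n\oplus\mathbb{Z}_2$ comes from $H_n(M;\mathbb{Z}_2)\otimes\Omega_0$ in formula (\ref{integral-singular-bordism-group-ricci-flow-pde}); it is a homological artifact of the Cauchy data and carries no information about smooth structures, so it cannot ``distinguish $(M,\gamma)$ from $(S^n,\gamma')$ at the smooth level''---indeed the whole point is that the weak/singular bordism group is blind to exotic structures, while the smooth group $\Omega^{(RF)}_n$ is not. Second, your reading of the $0$-crystal claims inverts the paper's logic: under the homotopy equivalence full admissibility hypothesis $(RF)$ is \emph{not} automatically a $0$-crystal; it becomes one precisely when $\Gamma_n=0$ (e.g.\ $n=3$), and in that case all admissible homotopy spheres are diffeomorphic to $S^n$, because a $0$-crystal supplies smooth bordisms and smooth bordisms force diffeomorphism. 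Your version ($\mathrm{cry}(RF)=0$ in all dimensions while the endpoints ``remain only homeomorphic'') is internally inconsistent with that mechanism. Only your last clause---that restricting admissible Cauchy data to genuine spheres (sphere full admissibility) kills the obstruction and yields the $0$-crystal property for every $n\ge 1$---matches the paper's conclusion, though the paper obtains it directly from $\Omega^{(RF)}_n\cong K^{(RF)}_{n,s}\cong 0$ rather than through the surgery exact sequence.
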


\begin{proof}
Let us first consider the following lemma.
\begin{lemma}\label{integral-characteristic-classes-in-ricci-flow-equation-a}
Let $N_0,N_1\subset(RF)$ be two space-like connected smooth compact Cauchy manifolds at two different instant $t_0\not= t_1$, identified respectively with two different Riemannian structures $(M,\gamma_0)$ and $(M,\gamma_1)$. Then one has $N_0\approxeq N_1$.
\end{lemma}
\begin{proof}
In fact this follows directly from the fact the diffeomorphisms $(M,\gamma_i)\cong N_i$, $i=0,1$, and from the fact that any Riemannian metric on $M$ can be continuously deformed into another one. More precisely we shall prove that there exists two continuous functions $f:N_0\to N_1$ and $h:N_1\to N_0$, such that $h\circ f\simeq 1_{N_0}$ and $f\circ h\simeq 1_{N_1}$. Realy we can always find  homotopies $F,G:I\times M\to M$, that  continuosly deform $\gamma_1$ into $\gamma_0$ and vice versa. More precisely $F_0=id_M$, $G_0=id_M$, $F_1^*\gamma_1=\gamma_0$, and $G_1^*\gamma_0=\gamma_1$. Therefore, we get $G_1\circ F_1\simeq G_0\circ F_0=1_M$ and $F_1\circ G_1\simeq F_0\circ G_0=1_M$. Thus we can identify $f$ with $F_1$ and $h$ with $G_1$.
\end{proof}
\begin{lemma}\label{integral-characteristic-classes-in-ricci-flow-equation-b}
Let $N_0,N_1\subset(RF)_{+\infty}$ be two space-like, smooth, compact closed, homotopy equivalent Cauchy $n$-manifolds, corresponding to two different times $t_0\not=t_1$. Then $N_0$ and $N_1$, have equal all the the integral characteristic numbers.
\end{lemma}
\begin{proof}
Since we have assumed $N_0\approxeq N_1$, there are two mappings $f:N_0\to N_1$ and $h:N_1\to N_0$, such that $h\circ f\simeq 1_{N_0}$ and $f\circ h\simeq 1_{N_1}$. These mappings for functorial property induce canonical homomorphisms between the  groups $\pi_p(N_i)$, $H_p(N_i)$, $H^p(N_i)$, $i=0,1$, that we shall simply denote by $f_*$ and $h_*$ or $f^*$ and $h^*$ according to respectively the direct or inverse character of functoriality. Then one has the following properties  $f_*\circ h_*=1$, $h_*\circ f_*=1$ and similarly for the controvariant cases, i.e., $f^*\circ h^*=1$, $h^*\circ f^*=1$. These relations means that the induced morphisms $f_*$ and $f^*$ are isomorphisms with inverse $h_*$ and $h^*$ respectively. In other words one has the isomorphisms $\pi_p(N_0)\cong \pi_p(N_1)$, $H_p(N_i)\cong H_p(N_i)$, $H^p(N_i)\cong H^p(N_i)$. As a by-product we get also the commutative diagram reported in (\ref{commutative-diagram-induced-isomorphisms-homotopic-equivalent-cauchy-data}).
\begin{equation}\label{commutative-diagram-induced-isomorphisms-homotopic-equivalent-cauchy-data}
 \xymatrix{H_{n}(N_0;\mathbb{R})\times H^{n}(N_0;\mathbb{R})\ar@{=}[dd]^{\wr}_{(f_*,(f^{-1})^*)}\ar[dr]^{<,>}&\\
 &\mathbb{R}\\
H_{n}(N_1;\mathbb{R})\times H^{n}(N_1;\mathbb{R})\ar[ur]_{<,>}& }
\end{equation}
Since $H_{n}(N_i;\mathbb{R})\cong\mathbb{R}\cong H^{n}(N_i;\mathbb{R})$, $i=0,1$, let the isomorphism $f_*$ be identified with a non-zero number $\lambda\in\mathbb{R}\setminus\{0\}$, then $(f^{-1})^*=1/\lambda$, and we get that $<f_*[N_0],(f^{-1})^*\alpha>=<\lambda,\mu/\lambda>=\mu$ where $\mu$ is the number that represents the $n$-differential form $\alpha$. On the other hand one has $<[N_0],\alpha>=1.\mu=\mu$.
\end{proof}

\begin{lemma}\label{zero-class-integral-bordism-homotopy-sphere}
Under the same hypotheses of Lemma \ref{integral-characteristic-classes-in-ricci-flow-equation-b}, let us add that we assume admissible only orientable Cauchy manifolds. Then $N_0\in[N_1]\in\Omega_n^{(RF)_{+\infty}}$.
In other words, $N=\partial V$, where $V$ is a smooth solution, iff $<[\alpha],[N]>=0$ for all the conservation laws $\alpha$.
\end{lemma}
\begin{proof}
If we assume admissible only orientable Cauchy manifolds, then the canonical homomorphism $j_n:\Omega_n^{(RF)_{+\infty}}\to(\mathcal{I}(RF)_{+\infty}^{n})^*$ is injective, (see \cite{PRA01}), therefore $N_0\in[N_1]\in\Omega_n^{(RF)_{+\infty}}$ iff $N_0$ and have equal all integral characteristic numbers.
\end{proof}

Since Lemma \ref{zero-class-integral-bordism-homotopy-sphere} is founded on the assumption that the space of conservation laws of $(RF)$ is not zero, in the following lemma we shall prove that such an assumption is true.
\begin{lemma}\label{conservation-laws-ricci-flow-equations}
The space $\mathcal{I}(RF)_{+\infty}^{n}\cong E^{0,n}_1$ of conservation laws of the $(RF)$ is not zero. In fact any differential $n$-form given in {\em(\ref{conservation-laws-ricci-flow-equations-a})} is a conservation law of $(RF)$.\footnote{$E^{0,n}_1$ is the spectral term, in the Cartan spectral sequence of a PDE $E_k\subset J^k_n(W)$, just representing the conservation laws space of $E_k$. (See e.g., \cite{PRA01, PRA1, PRA5}.)}
\begin{equation}\label{conservation-laws-ricci-flow-equations-a}
    \omega=T dx^1\wedge\cdots\wedge dx^n+X^p(-1)^p dt\wedge dx^1\wedge\cdots\wedge \widehat{dx^p}\wedge\cdots\wedge dx^n
\end{equation}
with
\begin{equation}\label{conservation-laws-ricci-flow-equations-b}
    \left\{
    \begin{array}{l}
      T=g_{ij}\varphi^{ij}\\
      X^p=\kappa\int\left(R_{ij}(g)\, \varphi^{ij}-R^{ij}(\varphi)\, g_{ij}\right)\, dx^p+c^p\\
    \end{array}
    \right\}:\hskip 3pt\left\{ \begin{array}{l}
    \varphi^{ij}{}_{,t}-\kappa R^{ij}(\varphi)=0\\
    g_{ij,t}+\kappa R_{ij}(g)=0\\
     \end{array}\right\}.
\end{equation}
$c^p\in\mathbb{R}$ are arbitrary constants and $\varphi^{ij}$, $1\le i,j\le n$, are functions on $\mathbb{R}\times M$, symmetric in the indexes, solutions of the equation given in {\em(\ref{conservation-laws-ricci-flow-equations-b})}.
\end{lemma}

\begin{proof}
Let us prove that $d\omega|_V=0$ for any solution $V$ of $(RF)$. In fact, by a direct calculation we get
\begin{equation}\label{conservation-laws-ricci-flow-equations-c}
d\omega=\left[(g_{ij,t}+\kappa R_{ij}(g))\varphi^{ij}+g_{ij}(\varphi^{ij}{}_{,t}-\kappa R^{ij}(\varphi))\right]dt\wedge dx^1\wedge\cdots\wedge dx^n.
    \end{equation}
    Therefore, the conservation laws in (\ref{conservation-laws-ricci-flow-equations-a}) are identified with the solutions of the PDE given in (\ref{conservation-laws-ricci-flow-equations-b}) for $\varphi_{ij}$. This is an equation of the same type of the Ricci flow equation, hence its set of solutions is not empty.
\end{proof}

Now, let $M$ belong to the same integral bordism class of $S^n$ in $(RF)$: $M\in[S^n]\in\Omega_n^{(RF)}$. It follows from Theorem \ref{change-topology-and-smooth-solutions} and Theorem \ref{tunnel-effect-pde}, that $M$ is necessarily homeomorphic to $S^n$. However, if $n\ge 4$, the smooth solution $V$ such that $\partial V=M\sqcup S^n$ does not necessitate to be a trivial h-cobordism. This happens iff the homotopy equivalence $f:M\approxeq S^n$ is such that $f\simeq 1_{S^n}$. (See Theorem \ref{properties-manifold-structure-set}.) This surely is the case at low dimensions $n=1,2,3,5,6$, and also for $n=4$, if holds the smooth Poincar\'e conjecture. But for $n\ge 7$ an homotopy sphere may have different structures with respect to this property. (See Tab. \ref{calculated-h-cobordism-groups-homotopy-sphere}, Lemma \ref{gamma-group-a} and Lemma \ref{gamma-group-b}.) In fact it is well known that there are homotopy spheres characterized by rational Pontrjagin numbers. Since rational Pontrjagin classes $p_q\in H^{4q}(M;\mathbb{Q})$ are homeomorphic invariants, such manifolds cannot admit a differentiable structure, taking into account the fact that the signature is a topological invariant. Such homotopy spheres are obtained by gluing a disk $D^n$, along its boundary $S^{n-1}$, with the boundary of a disk-$D^q$-fiber bundle over a sphere $S^m$, $E\to S^m$, such that $q=n-m$. When the $(n-1)$-dimensional boundary $\partial E$ is diffeomorphic to $S^{n-1}$, gives to $\widetilde{E}\equiv E\bigcup D^n$ a differentiable structure. But whether $\partial E\thickapprox S^{n-1}$, $\Sigma^n$ cannot, in general, have a differentiable structure, since it is characterized by rational Pontrjagin numbers. Therefore there are exotic spheres, (for example $\Sigma^{n-1}\equiv\partial E$), that are homeomorphic, but not diffeomorphic to $S^{n-1}$. In such cases the solution $V$ of the Ricci flow equation such that $\partial V=\Sigma^{n-1}\sqcup S^{n-1}$, cannot be, in general, a trivial h-cobordism.

By conclusion we get that not all $n$-dimensional homotopy spheres $M$  can, in general, belong to the same integral boundary class of $[S^n]\in\Omega^{(RF)}_n$, even if there exist singular solutions $V\subset(RF)$ such that $\partial V=M\sqcup S^n$. In fact, it does not necessitate, in general, that $V$ should be a trivial h-cobordism, i.e., that the homotopy equivalence between $M$ and $S^n$ should be a diffeomorphism of $S^n$ isotopic to the identity. This has, as a by-product, that in general $M$ is only homeomorphic to $S^n$ but not diffeomorphic to $S^n$. In order to better understand this aspect in the framework of PDE's algebraic topology, let us first show how solutions with neck-pinching singular points are related to smooth solutions. Recall the commutative diagram in Theorem 2.24 in \cite{PRA01}, here adapted in (\ref{commutative-diagram-smooth-bordism-singular-bar-bordism-group-ricci-flow-pde}) to $(RF)_{+\infty}$ and in dimension $p=n$.

\begin{equation}\label{commutative-diagram-smooth-bordism-singular-bar-bordism-group-ricci-flow-pde}
    \xymatrix{&&0\ar[d]&\\
&\Omega_{n}^{(RF)_{+\infty}}\ar[d]_{j_n}\ar[r]^(0.3){i_n}&\Omega^{(RF)_{+\infty}}_{n,s}\cong\bar H_n((RF)_{+\infty};\mathbb{R})\ar[d]&\\
0\ar[r]&{(\mathcal{I}((RF)_{+\infty}))^*}
\ar[r]& (\bar H_n((RF)_{+\infty};\mathbb{R}))^*\ar[r]&0 }
\end{equation}
There $\bar H^n((RF)_{+\infty};\mathbb{R})$ is the $n$-dimensional bar de Rham cohomology of $(RF)_{+\infty}$. The isomorphism $\Omega^{(RF)_{+\infty}}_{n,s}\cong\bar H_n((RF)_{+\infty};\mathbb{R})$ is a direct by-product of the exact commutative diagram in Definition 4.8(3) in \cite{PRA1}. Then, taking into account that in solutions of $(RF)_{+\infty}$ cannot be present Thom-Boardman singularities, it follows that solutions bording smooth Cauchy manifolds in the bordism classes of  $\widehat{\Omega_{n}^{(RF)_{+\infty}}}\equiv i_n(\Omega_{n}^{(RF)_{+\infty}})\triangleleft\:\Omega^{(RF)_{+\infty}}_{n,s}$, can have singularities of neck-pinching type. (See Fig. \ref{neck-pinching-singular-solutions}(a).)
\begin{figure}[h]
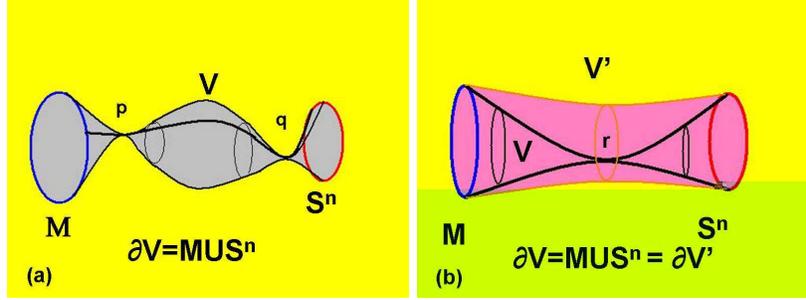

\centering
\centerline{\includegraphics[height=4cm]{neck-pinching-singular-solutions-a.eps}
\includegraphics[height=4cm]{neck-pinching-singular-solutions-b.eps}}
\caption{Neck-pinching singular solutions type, $V$, $\partial V=M\sqcup S^n$, in Ricci flow equations, with singular points $p$, $q$ in (a) and $r$ in (b). In (b) is reported also a smooth solution $V'$, bording $M$ and $S^n$ as well as a neck-pinching singular solution $V$ bording the same manifolds.}
\label{neck-pinching-singular-solutions}
\end{figure}

From Corollary 2.5 in \cite{PRA01} it follows that if $M$ is an homotopy sphere belonging to the integral bordism class $[S^n]\in\widehat{\Omega_{n}^{(RF)_{+\infty}}}$, one has surely $M\sqcup S^n=\partial V'$, for some smooth solution $V'$ of $(RF)$, but can be also $M\sqcup S^n=\partial V$ for some solution $V\subset(RF)_{+\infty}$ having some neck-pinching singularity. (See Fig. \ref{neck-pinching-singular-solutions}(b).) In general $V$ cannot be considered isotopic to $V'$. However $M$ is diffeomorphic to $S^n$, (the diffeomorphism is that induced by the smooth solution $V'$), and all the singular points, in the neck-pinching singular solutions, bording $M$ with $S^n$, are ''solved'' by the smooth bordism $V'$. Let us also emphasize that if an $n$-dimensional homotopy sphere $M\in[S^n]\in\Omega_n^{(RF)_{+\infty}}$, i.e., there exists a smooth solution $V\subset(RF)_{+\infty}$ such that $\partial V=M\sqcup S^n$, means that the characteristic flow on $V$ is without singular points, hence from Theorem \ref{tunnel-effect-pde} it follows that $V\cong M\times I$, and $V\cong S^n\times I$, hence $M\cong S^n$. If this happens for all $n$-dimensional homotopy sphere, then $\Theta_n=0$ and vice versa. However, it is well known that there are homotopy spheres of dimensions $n\ge 7$ for the which $\Theta_n\not=0$. (For example the Milnor spheres.) This is equivalent to say that $\pi_0(Diff_+(S^{n-1}))\not=0$, since $\Theta_n\cong\pi_0(Diff_+(S^{n-1}))$ (Smale). This happens when there are homotopy spheres that bound non-contractible manifolds. In fact, if there exists a trivial h-bordism $V$ bording $S^n$ with $M$, then $W=V\bigcup_{S^n}D^{n+1}\cong D^{n+1}$ and $\partial W=M$. However, since the conservation laws of $(RF)$ depend on a finite derivative order (second order), the fact that all $n$-dimensional homotopy spheres have the same integral characteristic numbers of the sphere $S^n$, implies that there are smooth integral manifolds bording them at finite order. There the symbols of the Ricci flow equation, and its finite order prolongations, are not trivial ones, hence in general such solutions present Thom-Boardman singular points. As a by-product of Theorem 2.25 in \cite{PRA01}, (see also \cite{PRA1}), and Theorem 2.1, Theorem 2.12 and Theorem 3.6 in \cite{PRA4} between such solutions, there are ones that are not smooth, but are topological solutions inducing the homeomorphisms between $M$ and $S^n$: $M\approx S^n$. Therefore, if we consider admissible in $(RF)$ only space-like Cauchy integral manifolds, corresponding to homotopy spheres, ({\em homotopy equivalence full admissibility hypothesis}), then one has the short exact sequence (\ref{short-exact-sequence-homotopy-spheres-in-ricci-flow-pde}).
\begin{equation}\label{short-exact-sequence-homotopy-spheres-in-ricci-flow-pde}
    \xymatrix{0\ar[r]&K^{(RF)}_{n,s}\ar[r]&\Omega^{(RF)}_{n}\ar[r]&\Omega^{(RF)}_{n,s}=0\ar[r]&0}
\end{equation}
We get

\begin{equation}\label{short-exact-sequence-homotopy-spheres-in-ricci-flow-pde-a}
   \Omega^{(RF)}_{n} \cong K^{(RF)}_{n,s}=\left\{[M]|M=\partial V,\: V=\hbox{\rm singular solution of $(RF)$}\right\}
\end{equation}
and $M\in[S^n]\in\Omega^{(RF)}_{n} $ iff $M\cong S^n$. Furthermore, two $n$-dimensional homotopy spheres ${}'\Sigma^n$ and $\Sigma^n$ belong to the same bordism class in $\Omega^{(RF)}_{n}$ iff ${}'\Sigma^n\cong \Sigma^n$. Therefore we get the following canonical mapping $\Gamma_n\to \Omega^{(RF)}_{n}$, $[\Sigma^n]_{\Gamma_n}\mapsto[\Sigma^n]_{\Omega^{(RF)}_{n}}$, such that $0\:\left\{\in \Gamma_n\right\}\mapsto [S^n]_{\Omega^{(RF)}_{n}}$. (For $n\not= 4$, one can take $\Gamma_n=\Theta_n$.) This mapping is not an isomorphism. Therefore, in the homotopy equivalence full admissibility hypothesis, and in the case that $\Gamma_n=0$, we get that the Ricci flow equation becomes a $0$-crystal PDE, so all homotopy spheres are diffeomorphic to $S^n$. This is the case, for example, of $n=3$, corresponding to the famous Poincar\'e conjecture. Finally, if we consider admissible in $(RF)$ only space-like Cauchy integral manifolds, corresponding to manifolds diffeomorphic to spheres, ({\em sphere full admissibility hypothesis}), then $\Omega^{(RF)}_{n} \cong K^{(RF)}_{n,s}\cong \Omega^{(RF)}_{n,s}=0$ and one has $cry(RF)=0$, i.e. $(RF)$ becomes a $0$-crystal for any dimension $n\ge 1$.\footnote{From this theorem we get the conclusion that the Ricci flow equation for $n$-dimensional Riemannian manifolds, admits that starting from a $n$-dimensional sphere $S^n$, we can dynamically arrive, into a finite time, to any $n$-dimensional homotopy sphere $M$. When this is realized with a smooth solution, i.e., solution with characteristic flow without singular points, then $S^n\cong M$. The other homotopy spheres $\Sigma^n$, that are homeomorphic to $S^n$ only, are reached by means of singular solutions. So the titles of this paper and its companion \cite{PRA16} are justified now !

Results of this paper agree with previous ones by J. Cerf \cite{CERF}, M. Freedman \cite{FREEDMAN}, M. A. Kervaire and J. W. Milnor \cite{KERVAIRE-MILNOR, MILNOR}, E. Moise \cite{MOISE1, MOISE2} and S. Smale \cite{SMALE1, SMALE2, SMALE3},  and with the recent proofs of the Poincar\'e conjecture by R. S. Hamilton \cite{HAMIL1, HAMIL2, HAMIL3, HAMIL4, HAMIL5}, G. Perelman \cite{PER1, PER2}, and A. Pr\'astaro \cite{PRA14, AG-PRA1}.}
\end{proof}

\end{document}